\documentclass[11pt]{article}

\usepackage[utf8]{inputenc}
\usepackage[T1]{fontenc}
\usepackage{amsmath,amssymb,amsthm}
\usepackage{longtable}
\usepackage{booktabs}
\usepackage{enumitem}
\usepackage{graphicx}
\usepackage{subcaption}
\usepackage{pdflscape}
\usepackage[margin=1in]{geometry}
\usepackage{aliascnt}

\usepackage{xcolor}
\usepackage[colorlinks,citecolor=blue!70!black,linkcolor=red!60!black]{hyperref}
\usepackage[nameinlink]{cleveref}

\theoremstyle{definition}
\newtheorem{theorem}{Theorem}[section]
\newaliascnt{definition}{theorem}
\newtheorem{definition}[definition]{Definition}
\aliascntresetthe{definition}
\newaliascnt{lemma}{theorem}
\newtheorem{lemma}[lemma]{Lemma}
\aliascntresetthe{lemma}
\newaliascnt{corollary}{theorem}
\newtheorem{corollary}[corollary]{Corollary}
\aliascntresetthe{corollary}
\newaliascnt{proposition}{theorem}

\aliascntresetthe{proposition}
\newaliascnt{remark}{theorem}
\newtheorem{remark}[remark]{Remark}
\aliascntresetthe{remark}
\newaliascnt{example}{theorem}
\newtheorem{example}[example]{Example}
\aliascntresetthe{example}
\newtheorem{maintheorem}{Theorem}

\newtheorem*{namedquestion}{Question}
\newtheorem{question}{Question}

\crefname{maintheorem}{theorem}{theorems}
\Crefname{maintheorem}{Theorem}{Theorems}
\crefname{theorem}{theorem}{theorems}
\Crefname{theorem}{Theorem}{Theorems}
\crefname{definition}{definition}{definitions}
\Crefname{definition}{Definition}{Definitions}
\crefname{lemma}{lemma}{lemmas}
\Crefname{lemma}{Lemma}{Lemmas}
\crefname{corollary}{corollary}{corollaries}
\Crefname{corollary}{Corollary}{Corollaries}
\crefname{proposition}{proposition}{propositions}
\Crefname{proposition}{Proposition}{Propositions}
\crefname{remark}{remark}{remarks}
\Crefname{remark}{Remark}{Remarks}
\crefname{question}{question}{questions}
\Crefname{question}{Question}{Questions}
\crefname{appendix}{appendix}{appendices}
\Crefname{appendix}{Appendix}{Appendices}
\crefname{example}{example}{examples}
\Crefname{example}{Example}{Examples}
\crefname{subfigure}{figure}{figures}
\Crefname{subfigure}{Figure}{Figures}

\title{Amphichiral Knots: Odd Braid Index and Symmetry Classification in the Three-Braid Case}
\author{\textsc{Hyungseok Jung}}
\date{}

\begin{document}
\maketitle

\begin{abstract}
We study how amphichirality of a knot constrains its braid index and, in the
smallest nontrivial case, the combinatorics of its braid words. Using the
Dynnikov--Prasolov resolution of the Jones conjecture, we show
the braid index of an amphichiral knot is odd. This answers,
in the negative, a question of Stoimenow on amphichiral knots of even braid
index. We then classify prime amphichiral knots of braid index~$3$. Every
amphichiral knot of braid index~$3$ is alternating and admits a minimal
$3$-braid representative in a standard form encoded by a word~$c$. Building on
the Birman--Menasco classification of closed
$3$-braids and the Murasugi normal form, we describe a dihedral action on~$c$
under which the mirror and mirror-reverse operations are realized by a rotation
and a reflection. For a standard form whose closure is a prime knot
of braid index~$3$, this yields a complete criterion:
$\widehat{\beta_c}$ is amphichiral if and only if $c$ is a palindrome
or has odd period, together with a determination of the precise symmetry
type in terms of these properties and the presence of a non-degenerate flype.
\end{abstract}

\section{Introduction}

A knot $K$ is said to be \emph{amphichiral} if, as an unoriented knot, it is
isotopic to its mirror image $mK$. Amphichirality is among the most basic
symmetry properties of a knot, and it is subtly intertwined with orientation.
Once an orientation is fixed, amphichirality refines into three types, namely
\emph{positive}, \emph{negative}, and \emph{fully} amphichiral knots. These three
types are distinguished according to whether $K$ is equivalent to its mirror
image and whether that equivalence can be taken compatibly with reversing the
orientation. Our aim in this paper is to understand how this symmetry interacts
with a purely algebraic invariant, the \emph{braid index}, and, in the low-index
case, with the combinatorics of the braid words that represent a knot.

By Alexander's theorem~\cite[Theorem~2.3]{KasselTuraev}, every oriented link in
$\mathbb{R}^3$ is isotopic to a closed braid, so to each knot there corresponds a
braid word. This makes it convenient to study a knot algebraically, through its
braid word rather than the knot diagram directly, which is especially effective
for families such as torus knots. For links of braid index~$3$ this algebraic viewpoint is now
essentially complete. Birman and Menasco~\cite[The Classification Theorem
(Version~1)]{BM1993} classified the conjugacy classes of $3$-braid
representatives, and combined with the Murasugi normal form for closed
$3$-braids~\cite[Theorem~6.1]{DasbachLowrance2011} and the resolution of the Jones
conjecture by Dynnikov and Prasolov~\cite[Theorem~9]{DP2013}, the symmetry
analysis of closed $3$-braids becomes entirely explicit.

The paper is built around three main results. The first, proved in
\Cref{sec:partI}, constrains the braid index of any amphichiral
knot.

\begin{maintheorem}\label{thm:A}
The braid index of an amphichiral knot is odd.
\end{maintheorem}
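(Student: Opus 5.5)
The idea is to combine the Jones conjecture (Dynnikov--Prasolov~\cite[Theorem~9]{DP2013}) with the behaviour of writhe under mirroring and the parity of braid words for knots. Let $K$ be amphichiral with braid index $n$, and let $\beta\in B_n$ be a minimal braid representative. Write $w(\beta)$ for the exponent sum (algebraic length) of $\beta$. The Jones conjecture, now a theorem, says that the exponent sum of a minimal-index braid representative is an invariant of the link. So every braid $\beta'\in B_n$ whose closure is $K$ has $w(\beta')=w(\beta)$.

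The key steps run as follows.

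\begin{enumerate}
\item \emph{Mirroring.} Replacing every $\sigma_i^{\pm1}$ in $\beta$ by $\sigma_i^{\mp1}$ gives a braid $\bar\beta\in B_n$ whose closure is $mK$, with $w(\bar\beta)=-w(\beta)$. Since $K$ and $mK$ have the same braid index, $\bar\beta$ is a minimal representative of $mK$.
\item \emph{Orientation.} Amphichirality is stated for unoriented knots, but for a knot any isotopy $K\to mK$ either preserves or reverses orientation. Reversal does not change the writhe of a closed braid: reversing a braid word and conjugating by the half twist $\Delta$ yields a braid representative of the reversed knot with the same exponent sum. So in either case $\bar\beta$ (or its reversed version) is a minimal $n$-braid representative of $K$ itself.
\item \emph{Vanishing writhe.} By the Jones conjecture applied to $K$, $w(\bar\beta)=w(\beta)$, and hence $w(\beta)=0$.
\item \emph{Parity.} For a knot, the permutation of $\beta$ is an $n$-cycle. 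Each generator maps to a transposition, so the sign of that permutation is $(-1)^{w(\beta)}$. An $n$-cycle has sign $(-1)^{n-1}$, giving $w(\beta)\equiv n-1 \pmod 2$. With $w(\beta)=0$ this forces $n$ to be odd.
\end{enumerate}

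The main obstacle is the orientation bookkeeping in step 2. We must make sure that the Jones conjecture is applied to oriented knots and that reversing orientation does not change the exponent sum. I would handle this directly: the map $\sigma_{i_1}^{\epsilon_1}\cdots\sigma_{i_k}^{\epsilon_k}\mapsto\sigma_{i_k}^{\epsilon_k}\cdots\sigma_{i_1}^{\epsilon_1}$ realizes orientation reversal of the closure, up to a flip of the braid axis that conjugates by $\Delta$. This map visibly preserves the exponent sum.

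Alternatively, one can bypass orientation entirely. The strong Morton--Franks--Williams form of the Jones conjecture bounds $w$ using the $v$-degrees of the HOMFLY polynomial. That polynomial is insensitive to orientation reversal for knots, and mirroring sends $v\mapsto -v^{-1}$.
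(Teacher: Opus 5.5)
Your proposal is correct and follows the paper's proof. In both, the Dynnikov--Prasolov theorem makes the exponent sum of a minimal-strand representative an invariant of the oriented knot; mirroring negates it while reversal preserves it, so positive or negative amphichirality forces it to vanish; and comparing $(-1)^{e(\beta)}$ with the sign $(-1)^{n-1}$ of the $n$-cycle $\pi(\beta)$ then forces $n$ to be odd. Your closing HOMFLY remark is unnecessary, and on its own it would not determine the writhe unless the Morton--Franks--Williams bound is sharp, but your main argument does not depend on it.
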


\noindent
The proof rests on a consequence of the Jones conjecture: the exponent sum of a
minimal-strand braid representative is a link invariant. Amphichirality forces
this invariant to vanish, and a parity argument on the underlying permutation
then yields the conclusion. As a corollary, the $(2,k)$-torus knots are chiral,
that is, non-amphichiral (\Cref{cor:torus-chiral}).

\Cref{thm:A} answers a question of Stoimenow. In the course of analysing
braid-index criteria, Stoimenow \cite[Question~4]{StoimenowMFW} asked the following.
\begin{namedquestion}
Is there an amphichiral knot $K$ of even braid index?
\end{namedquestion}
\noindent(The original question used the term `achiral', but since this has the
same meaning as `amphichiral', it has been changed to the latter expression here.)
He observed that such a knot would furnish a counterexample to the then-open Jones conjecture
on the writhe of minimal-strand braid representatives. Since the Jones conjecture
has since been established by Dynnikov and Prasolov~\cite[Theorem~9]{DP2013}, combining their
theorem with the parity argument above answers this question in the negative and
definitively. That is, no amphichiral knot has even braid index, and so \Cref{thm:A}
resolves Stoimenow's question.

In Part~II (\Cref{sec:partII}) we classify prime amphichiral knots of braid
index~$3$. For a word of positive integers
$c=(a_1,b_1,\ldots,a_n,b_n)$, consider the $3$-braid in
\emph{alternating standard form}
\[
  \beta_c
  = \sigma_1^{a_1}\sigma_2^{-b_1}\cdots\sigma_1^{a_n}\sigma_2^{-b_n},
  \qquad a_i, b_i \ge 1,
\]
and let $D_c$ denote its standard closed-braid diagram. The diagram $D_c$ is
alternating. More importantly, the alternating standard form loses no
generality for the knots under consideration:
\Cref{thm:amphi-implies-alt} shows that every amphichiral knot of braid
index~$3$ admits a minimal $3$-braid representative in alternating standard
form and is therefore alternating. Thus the classification below applies to
every prime amphichiral knot of braid index~$3$ after choosing such a
representative; no separate alternating or reduced-diagram hypothesis is
required. Composite knots are not included in \Cref{thm:B,thm:C} or in the
appendices.

The classification is stated in two theorems: \Cref{thm:B} gives the general
criterion for amphichirality, and \Cref{thm:C} refines it into the precise symmetry
type.

\begin{maintheorem}\label{thm:B}
Let \[\beta_c=\sigma_{1}^{a_1}\sigma_{2}^{-b_1}\cdots
\sigma_{1}^{a_n}\sigma_{2}^{-b_n},
\qquad a_i,b_i\ge1,
\]
be a braid in alternating standard form, and set $K=\widehat{\beta_c}$. Suppose that $K$ is a prime knot of braid index~$3$. Then 
\[
K\ \text{is amphichiral}
\quad\Longleftrightarrow\quad
c\ \text{is a palindrome or has odd period.}
\]
\end{maintheorem}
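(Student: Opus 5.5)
We plan to reduce amphichirality of $K=\widehat{\beta_c}$ to an identity between conjugacy classes of $3$-braids, and then read that identity off the word $c$ via the Murasugi normal form. First we compute what the mirror does to $\beta_c$. Mirroring inverts the sign of every crossing, so $mK$ is the closure of $\sigma_1^{-a_1}\sigma_2^{b_1}\cdots\sigma_1^{-a_n}\sigma_2^{b_n}$. Conjugating by the half twist $\Delta$, which exchanges $\sigma_1\leftrightarrow\sigma_2$, turns this into $\sigma_2^{-a_1}\sigma_1^{b_1}\cdots\sigma_2^{-a_n}\sigma_1^{b_n}$. After a cyclic shift by one syllable this is again in alternating standard form, with word obtained from $c$ by the shift $(a_1,b_1,\ldots,a_n,b_n)\mapsto(b_1,a_2,b_2,\ldots,a_n,b_n,a_1)$. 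Read in the cyclic sequence of exponents $(a_1,b_1,\dots,a_n,b_n)$, this is a rotation by one place, an odd rotation of a cyclic word of length $2n$.

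Orientation reversal reverses the braid word. Using the flip automorphism and conjugation, $\widehat{\beta_c}$ reversed is the closure of $\beta_{\bar c}$, where $\bar c$ is the reversal of $c$, a reflection. Together these generate the dihedral group $D_{2n}$ acting on cyclic words of length $2n$, and the even rotations are the ordinary cyclic conjugations. So $-mK$ and $mK$ are represented by standard forms whose words are a reflection, respectively an odd rotation, of $c$.

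Next we need rigidity: two alternating standard forms represent the same unoriented knot only when their words lie in one dihedral orbit, up to flypes. This is where we invoke Birman--Menasco. Since $K$ has braid index $3$, every minimal representative of $K$ or $mK$ is a $3$-braid. Birman--Menasco says that a link of braid index $3$ has a unique conjugacy class of $3$-braid representatives, apart from the explicit exceptions related by flypes. Combined with the uniqueness of the Murasugi normal form within a conjugacy class, which for generic type is exactly the cyclic word up to even rotation, this gives the following. $K\cong mK$ (unoriented) iff $c$ is dihedrally equivalent to its odd shift, or the flype exception identifies $\beta_c$ with the reversed or mirrored class. Exponent sum zero, from the Jones conjecture argument of \Cref{thm:A}, gives $\sum a_i=\sum b_i$, and any flype-related class must also satisfy this.

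Finally, the combinatorics. As unoriented knots, $mK\cong K$ iff some element of the coset (odd rotations together with the reflections of the appropriate parity) fixes $c$ cyclically. An odd rotation fixing $c$ means the minimal period of $c$ is odd. A reflection fixing $c$ up to rotation means $c$ is a palindrome, in the cyclic sense. We then check the converse directly: a palindrome or odd period produces an explicit isotopy $K\to mK$ through the braid moves above.

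The main obstacle is the rigidity step. We must verify that the Birman--Menasco exceptional flype classes produce no extra amphichiral examples outside the palindrome/odd-period criterion. We also must pin down precisely which dihedral elements correspond to orientation-reversing versus preserving equivalences. We expect to handle the flype exceptions by noting that, for prime knots with $\sum a_i=\sum b_i$, the flyped word lies in the same dihedral orbit. Failing that, we would compare Murasugi normal forms case by case.
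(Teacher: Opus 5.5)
Outside the flype exception, your plan is the paper's argument: the mirror acts as the odd rotation $\rho$, Murasugi uniqueness holds up to $\rho^{2}$, Birman--Menasco applies, and the explicit dihedral moves give sufficiency. The genuine gap is exception (iii) of Birman--Menasco, which you leave as an expectation.

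Your expected fact is true, even without $\sum a_i=\sum b_i$. But knowing only that the second conjugacy class contains a word in the dihedral orbit of $c$ does not suffice. That orbit contains standard words for each of $K$, $mK$, $rK$ and $mrK$. Unless you know \emph{which} $\langle\rho^{2}\rangle$-coset the second class is, it could a priori supply an identification $K=mK$ or $K=mrK$ that the palindrome/odd-period test does not see.

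The missing observation is that the Birman--Menasco partner $\sigma_1^{p}\sigma_2^{\varepsilon}\sigma_1^{r}\sigma_2^{q}$ is a cyclic permutation of $r(\sigma_1^{p}\sigma_2^{q}\sigma_1^{r}\sigma_2^{\varepsilon})=\sigma_2^{\varepsilon}\sigma_1^{r}\sigma_2^{q}\sigma_1^{p}$. Hence $r$ swaps the two classes, and the class other than that of $\beta_c$ is the class of $r(\beta_c)$, i.e.\ of $\beta_{\rho\tau(c)}$. The standard words closing to $K$ are therefore exactly $\langle\rho^{2}\rangle c\cup\langle\rho^{2}\rangle\rho\tau(c)$. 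Then $\rho(c)=\rho^{2j+1}\tau(c)$ forces $c=\tau\rho^{-2j}(c)$, a palindrome. Likewise $\tau(c)=\rho^{2j+1}\tau(c)$ forces $\rho^{2j+1}(c)=c$, odd period.

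With this step your route is complete, and shorter than the paper's. The paper instead combines the invertibility theorem, the Ko--Lee flype criteria, vanishing writhe and two explicit braid identities to show that $c$ is a rotation of $(k,1,1,k)$. That extra precision is what Theorem~C(B) needs, but Theorem~B does not. As written, ``compare Murasugi normal forms case by case'' is not yet an argument.

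Separately, fix the parity bookkeeping for reversal. The literal reversal $\tau(c)=(b_n,a_n,\dots,b_1,a_1)$ gives $\beta_{\tau(c)}$, which represents $mr(K)$, not $r(K)$. The reverse $r(K)$ is represented by $\beta_{\rho\tau(c)}$.

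This matters because the palindromes in the statement are exactly the words with $\tau(c)=\rho^{2s}(c)$ for some $s$. A word whose reversal is only an \emph{odd} rotation of itself gives an invertible but chiral knot; for example $c=(1,1,1,3)$, whose closure is $6_2$ up to mirror. So ``a reflection fixing $c$ up to rotation'' must mean $\tau\rho^{2s}(c)=c$. Likewise, ``$c$ is dihedrally equivalent to its odd shift'' always holds, so in the unique-class case it should read ``$\rho(c)$ or $\tau(c)$ lies in $\langle\rho^{2}\rangle c$''.
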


\begin{maintheorem}\label{thm:C}
Retain the hypotheses of \Cref{thm:B}. Then exactly one of the following two cases
occurs, and in each case the symmetry type of $K$ is determined as shown.
\begin{enumerate}[label=\textup{(\Alph*)}]
\item \emph{$\beta_c$ represents $K$ by a unique conjugacy class of $3$-braids.}
In this case,
  \begin{enumerate}[label=\textup{(\roman*)}]
  \item $K$ is fully amphichiral $\iff$ $c$ is a palindrome and has odd period;
  \item $K$ is positive amphichiral but not negative amphichiral $\iff$ $c$ has odd period but is not a palindrome;
  \item $K$ is negative amphichiral but not positive amphichiral $\iff$ $c$ is a palindrome but does not have odd period.
  \end{enumerate}
\item \emph{The conjugacy class of $\beta_c$ admits a non-degenerate flype.} In this
case, $K$ is amphichiral if and only if $c$ is of the form $(k,1,1,k)$ up to cyclic
permutation with $k\ge 2$; in particular, $c$ is a palindrome, and $K$ is fully
amphichiral.
\end{enumerate}
\end{maintheorem}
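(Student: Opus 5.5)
The proof will proceed by translating each symmetry operation on $K$ into an explicit operation on the word $c$, and then using the Birman--Menasco classification to decide when the transformed braid is conjugate to $\beta_c$. Regard $c=(a_1,b_1,\ldots,a_n,b_n)$ as a cyclic word of length $2n$, with the dihedral group $D_{2n}$ acting by rotation and reflection.

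First I will compute the three operations on braids. Mirroring sends $\sigma_i\mapsto\sigma_i^{-1}$. Conjugating by $\Delta$ swaps $\sigma_1$ and $\sigma_2$. Composing the two gives
\[
\beta_c\mapsto \sigma_2^{-a_1}\sigma_1^{b_1}\cdots \sigma_2^{-a_n}\sigma_1^{b_n},
\]
which after cyclic conjugation is the standard form of the word rotated by one position, $(b_1,a_2,b_2,\ldots,a_n,b_n,a_1)$. So the mirror $mK$ corresponds to the shift $\rho$ by one letter. Orientation reversal of a closed braid reverses the braid word. Reversing $\beta_c$ and then mirroring and conjugating by $\Delta$ as needed yields the reflection $c\mapsto$ reversed word, possibly composed with a shift. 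Hence the mirror-reverse $-mK$ corresponds to a reflection $\tau$ of $c$, and the reverse $-K$ corresponds to another reflection. This is the dihedral action mentioned in the abstract.

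Next I will decide when these operations preserve the knot type. By the hypotheses and Theorem~\ref{thm:amphi-implies-alt}, $K$ has braid index $3$. The Birman--Menasco classification then says that any $3$-braid representing $K$ is conjugate to $\beta_c$, except when the class admits a non-degenerate flype; in that case there are exactly two classes, related by the flype. Conjugacy classes of alternating standard forms are determined by $c$ up to cyclic rotation by even shifts, since a conjugation must keep $\sigma_1$-blocks in odd positions; this is the Murasugi normal form uniqueness. So in case (A):
\begin{itemize}
\item $K\cong mK$ unoriented means $\rho$ or a reflection carries $c$ to an even rotation of itself. An odd rotation fixing $c$ is the same as $c$ having odd period.
\item $K$ is positive amphichiral ($K=mK$ as oriented knots) exactly when $\rho c\sim c$, that is, when $c$ has odd period.
\item $K$ is negative amphichiral exactly when $\tau c\sim c$. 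I will arrange $\tau$ so that this condition reads "$c$ is a palindrome up to cyclic shift".
\end{itemize}
Theorem~\ref{thm:B} in case (A), together with (i)--(iii), then follows by taking combinations. Exactly one of (A) and (B) occurs because of the Birman--Menasco dichotomy.

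The hard step is case (B). Here the flype class consists of words of a special shape, essentially $\sigma_1^{u}\sigma_2^{v}\sigma_1^{w}\sigma_2^{\pm1}$, which in alternating standard form means $n=2$ with some $b_i=1$. For the closure to be a knot, $c$ must also satisfy a permutation-parity constraint. I will parametrize $c=(p,1,q,r)$ up to rotation and compute the flyped word. Then I will check when $\rho c$ or $\tau c$ lands in either of the two conjugacy classes $\{c,\ \mathrm{flype}(c)\}$, and verify that both conditions hold simultaneously. I expect this to force $p=r=k$, $q=1$, giving $(k,1,1,k)$ with $k\ge2$, since $k=1$ is degenerate or non-prime. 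Primality and the knot condition will be invoked to discard the small exceptional cases. The same computation shows that for these words both the rotation and the reflection are realized, so $K$ is fully amphichiral. In case (B), the "palindrome or odd period" criterion of Theorem~\ref{thm:B} has to be rechecked directly against this list, and that check is where I expect the most care to be needed.
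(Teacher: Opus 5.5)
Your proposal is correct. Case (A) is the paper's argument (\Cref{lem:dihedral} and \Cref{thm:conjugacy_3braids}, read off as in \Cref{thm:unique-amphi}); the genuine difference is in case (B).

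\textbf{The paper's route in case (B).} The paper never determines which words $c$ can lie in a flype class. It takes invertibility from \Cref{thm:invert}, so amphichirality upgrades to $mK=K$. Hence one of the two classes carries both a positive and a negative flype. The Ko--Lee criterion \Cref{lem:kolee-flype}(b) then gives $u=1$, $w=1$ or $v=2$. The exponent-sum condition $u+v+w-1=0$ cuts these to three explicit families, which \eqref{eq:flype-identity-one} and \eqref{eq:flype-identity-two} bring to $(k,1,1,k)$. The converse is: palindrome $\Rightarrow mr\in\mathrm{Sym}(K)$, plus invertibility.

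\textbf{Your route in case (B).} You do a finite word computation instead. For $c=(p,1,q,r)$ the second class is that of $(p,r,q,1)$, and non-degeneracy is exactly $r\neq1$, $p\neq q$. Compare $\rho c=(1,q,r,p)$ and $\tau c=(r,q,1,p)$ with the four words $(p,1,q,r)$, $(q,r,p,1)$, $(p,r,q,1)$, $(q,1,p,r)$. Each of the two conditions holds iff $c\in\{(k,1,1,k),(1,1,k,k)\}$. So they hold simultaneously, and full amphichirality follows without \Cref{thm:invert}, Ko--Lee, or the exponent sum.

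\textbf{What your route still needs.} The price is a reduction you only assert.
\begin{enumerate}[label=\textup{(\arabic*)}]
\item A flype representative $\sigma_1^u\sigma_2^v\sigma_1^w\sigma_2^{\varepsilon}$ need not be alternating; the paper's $\sigma_1\sigma_2^k\sigma_1^{-k}\sigma_2^{-1}$ is not. So ``$n=2$ with an entry $1$'' requires running through the sign patterns of $(u,v,w)$, using the Birman--Menasco non-degeneracy constraints to prevent exponent collapse. You discard the patterns with nonzero full-twist power, since $\beta_c$ has power $0$, and normalize the rest. For $\varepsilon=-1$ the survivors $(+,-,+)$, $(+,+,-)$, $(-,+,+)$ give $(u,|v|,w,1)$, $(u,1,v-1,|w|-1)$ and $(v-1,1,w,|u|-1)$. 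For $\varepsilon=+1$ the entry $1$ lands in an $a$-position, so ``some $b_i=1$'' should read ``some entry is $1$''. You then move it by an odd rotation, i.e.\ a mirror, which is harmless because both the hypothesis and the target set are mirror-invariant.
\item You must check that the partner class is always that of the swap $(a_1,b_2,a_2,b_1)$, whichever representative witnesses the non-degenerate flype. Otherwise the obvious flype of $\beta_c$ could be degenerate while another one is not. This holds because $L$ and $L^{\mathrm f}$ always normalize to words differing by a swap. Swapping the $a$'s versus swapping the $b$'s differs by $\rho^2$, and the swap commutes with $\rho^2$.
\end{enumerate}
With these two checks written out, your argument is complete and more self-contained than the paper's. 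The paper's argument, in turn, avoids classifying flype-class words at all, letting $e=0$ and Ko--Lee do the pruning.
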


\noindent The mechanism underlying both theorems is a dihedral action on the word $c$ (\Cref{lem:dihedral}). Let $\rho$ denote the rotation that shifts every entry of $c$ one position to the left, moving the first entry to the end. Under this action, $\rho$ realizes the mirror operation on $\widehat{\beta_c}$, while the corresponding reflection realizes the mirror-reverse operation. Thus the symmetry properties of $\widehat{\beta_c}$ are encoded by the rotational and reflectional symmetries of $c$. The group-theoretic formulation uses a homomorphism from the dihedral group to the four-element operation group $G$, and identifies the actual symmetries of $K$ with the stabilizer of the orbit of $c$ under the induced action. The precise quotient and stabilizer descriptions are developed in \Cref{sec:partII}.

The paper is organized as follows. \Cref{sec:def} fixes the notation and
terminology used throughout: the braid group $B_n$ with its permutation
homomorphism $\pi\colon B_n\to S_n$, a criterion for when the closure of a
$3$-braid is a knot, an algebraic description of amphichirality in its positive,
negative, and fully amphichiral types, and the mirror and reverse operations on
braid words. \Cref{sec:partI} proves \Cref{thm:A} and its corollary. \Cref{sec:partII} develops the dihedral action (\Cref{lem:dihedral}) and assembles the classification: the case of a unique conjugacy class is treated in \Cref{thm:unique-amphi}, and the flype case in \Cref{lem:flype-family} and \Cref{thm:flype-amphi}; \Cref{thm:B,thm:C} are read off from these.

The appendices make the classification explicit
using the data in~\cite{knotinfo}. \Cref{tab:bi3-cn12} (\Cref{app:bi3-cn12}) lists
every prime amphichiral knot of braid index~$3$ with crossing number at most~$12$, and
\Cref{tab:bi3-cn14} (\Cref{app:bi3-cn14}) those with crossing number~$14$, recording
for each the word $c$, a braid representative, and the resulting symmetry
type. Together they list every prime amphichiral knot of braid index~$3$ up to crossing
number~$14$, each of which is alternating by
\Cref{thm:amphi-implies-alt}. \Cref{tab:bi5-cn10} (\Cref{app:bi5-cn10})
lists the prime amphichiral knots of braid index~$5$ with crossing number at most~$10$;
every entry has odd braid index and no knot of even braid index appears, in
agreement with \Cref{thm:A}.

Since amphichiral knots always have odd braid index, the natural next case is
braid index~$5$. This suggests the following further questions.
\begin{question}
What characteristics do the words of amphichiral knots of braid
index~$5$ possess?
\end{question}

\noindent
When the braid index is $3$, the Birman--Menasco classification made this kind of
symmetry analysis tractable, since it reduces the classification of closed
$3$-braids to a small, explicit list of conjugacy-class exceptions. For braid
index~$4$, however, no comparably explicit classification of closed $4$-braids
is currently known, so even the basic combinatorial framework used in
\Cref{sec:partII} is unavailable. Braid index~$5$ presents an even greater
obstacle: the corresponding braid group has more generators and a
substantially richer conjugacy structure, so the knots themselves grow rapidly
in complexity, and a full classification along the lines of \Cref{thm:B,thm:C}
appears, at present, to be out of reach.

\begin{question}\label{q:invariants-from-word}
If amphichirality can be determined directly from a braid word, can other knot
invariants, such as the determinant $\det(K)$, the signature $\sigma(K)$, the
Seifert genus $g(K)$, or the unknotting number $u(K)$, likewise be detected or
computed from the braid word, and what algebraic criteria are required to do so?
\end{question}

\noindent
An affirmative answer to this question would be of considerable practical
value: it would translate a knot's complicated topological structure into
purely algebraic data carried by its braid word, making it possible to
compute knot invariants algebraically, directly from the word, rather than
through geometric or diagrammatic arguments.

\section*{Acknowledgements}
The author is deeply grateful to his advisor, Sanghoon Kwak, for invaluable
guidance and encouragement throughout this work. The author also thanks his
colleagues Jihun Kwak, Jiwoo Park, and Jinseon Lee for many helpful discussions.
The author was supported by the Young Scientist Grant (RS-2026-25481514) of the
National Research Foundation of Korea (NRF), funded by the Korea government
(MSIT).

\section*{AI Acknowledgements}
In preparing this manuscript, the author used Claude (Opus~4.8), developed by Anthropic, as an assistive tool for translating the author's Korean drafts into English and for assisting with the preparation of the \LaTeX{} source. The author also used the tool in the computational analysis of braid representatives. More specifically, braid-word data obtained from KnotInfo~\cite{knotinfo} were provided to the tool, which assisted in finding and checking sequences of braid relations and Markov moves that transform the given representatives into forms in which their symmetry properties are more readily visible. These computations were used in identifying braid representatives associated with positive, negative, or full amphichirality. The tool was further used to organize the resulting braid words and symmetry classifications into the tables recorded in the appendices; systematically arranging the data in this tabular form helped the author to notice the recurring patterns in the words~$c$, such as the palindrome and odd-period phenomena underlying \Cref{thm:B,thm:C}, that motivated and shaped the classification developed in this paper. The author independently reviewed and verified the resulting braid-word transformations and tabulated data, edited all AI-generated text, and takes full responsibility for the mathematical arguments and the contents of this paper.

\section{Definitions}\label{sec:def}
\noindent In this section we set up the notation and terminology for the rest of
the paper. We recall the braid group $B_n$ and its closure, and use the
permutation homomorphism $\pi\colon B_n\to S_n$ to characterize when the closure
of a $3$-braid is a knot, which occurs exactly when $\pi(\beta)$ is a $3$-cycle. We then
describe amphichirality algebraically and record the mirror and reverse
operations on braid words, which realize the geometric mirror and mirror-reverse
on the closure.

\begin{definition}[Braid group {\cite[definition 1.1]{KasselTuraev}}]\label{def:braid}
\leavevmode
\begin{enumerate}[label=\textup{(\alph*)}]
\item The \emph{Artin braid group} $B_{n}$ is the group generated by the $n-1$
generators $\sigma_{1},\dots,\sigma_{n-1}$ subject to the braid relations. It admits the
presentation
\[
B_{n}=\Big\langle \sigma_{1},\dots,\sigma_{n-1}\ \Big|\
\begin{aligned}
&\sigma_{i}\sigma_{j}=\sigma_{j}\sigma_{i}, && |i-j|\ge 2,\\
&\sigma_{i}\sigma_{i+1}\sigma_{i}=\sigma_{i+1}\sigma_{i}\sigma_{i+1}, && 1\le i\le n-2
\end{aligned}
\Big\rangle .
\]
Let $\pi\colon B_{n}\to S_{n}$ be defined by $\pi(\sigma_{i})=s_i$, the transposition
$(i\,i+1)$. Then $\pi$ is a surjective homomorphism.
\item The \emph{closure} of a braid $\beta$, denoted $\widehat{\beta}$, is formed by
connecting the top endpoints of the strands to the corresponding bottom
endpoints, thereby producing a closed link.
\end{enumerate}
\end{definition}

\begin{figure}[ht!]
    \centering
    \includegraphics[width=0.4\linewidth]{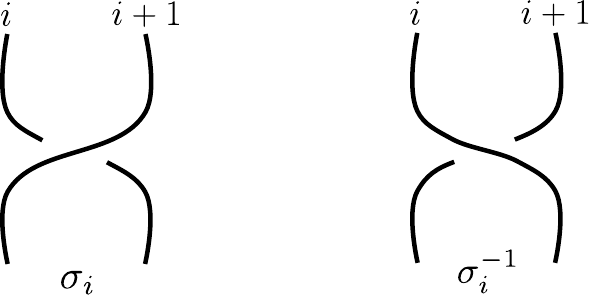}
\caption{The Artin generators acting on strands $i$ and $i+1$. The
left-hand diagram represents $\sigma_i$, and the right-hand diagram represents
$\sigma_i^{-1}$. With all strands oriented from top to bottom, $\sigma_i$ is the
positive crossing (sign $+1$), whereas $\sigma_i^{-1}$ is the negative crossing
(sign $-1$).}
\label{fig:generators}
\end{figure}

\noindent The number of components of the closure $\widehat{\beta}$ of
$\beta\in B_{n}$ equals the number of disjoint cycles of the permutation
$\pi(\beta)\in S_{n}$: each cycle of $\pi(\beta)$ traces out one closed loop when
the endpoints are joined. Hence $\widehat{\beta}$ is a knot (a one-component
link) if and only if $\pi(\beta)$ is a single $n$-cycle. In particular, a
$3$-braid $\beta\in B_{3}$ can be realized as a knot only when $\pi(\beta)$ is a
$3$-cycle.

\begin{definition}[Amphichiral knot]\label{def:amphi}
For a knot $K$ there are two operations.
\begin{enumerate}[label=\textup{(\alph*)}]
\item \emph{Mirror} ($m$): reversal of the orientation of $S^{3}$; in a
diagram this flips every crossing ($\sigma_{i}\mapsto\sigma_{i}^{-1}$). The orientation of
the knot itself is preserved.
\item \emph{Reverse} ($r$): the same knot with the opposite orientation.
\end{enumerate}

\begin{figure}[ht!]
\centering
\includegraphics[width=0.7\linewidth]{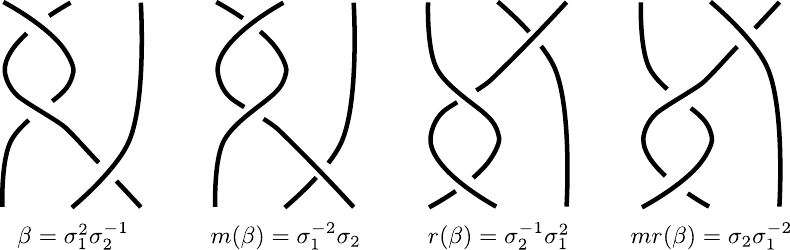}
\caption{The mirror and reverse operations on the braid
$\beta=\sigma_1^{2}\sigma_2^{-1}$, drawn with all strands oriented from top to
bottom. From left to right, the four diagrams represent $\beta$, $m(\beta)$,
$r(\beta)$, and $mr(\beta)$, respectively. The mirror $m$ flips every crossing
($\sigma_i\mapsto\sigma_i^{-1}$) while preserving the order and strand
positions; the reverse $r$ reads the word from bottom to top, reversing only the
order of the crossings while keeping each sign and strand position; and $mr$
combines the two operations.}
\label{fig:mirror-reverse}
\end{figure}

The mirror and reverse operations generate the four-element
\emph{operation group}
\[
G:=\{1,m,r,mr\}
\cong
\mathbb{Z}/2\mathbb{Z}\times\mathbb{Z}/2\mathbb{Z}.
\]

\smallskip
\noindent
The corresponding symmetry conditions are summarized in the following table.
\begin{center}
\footnotesize
\renewcommand{\arraystretch}{1.25}
\setlength{\tabcolsep}{4pt}
\begin{tabular}{c||c|c|c|c}
\hline
Operation & Orient. of $K$ & Orient. of $S^3$ & Relation & Property\\
\hline\hline
$r$ & reversed & preserved & $K=rK$ & invertible\\
\hline
$m$ & preserved & reversed & $K=mK$ & positive amphichiral\\
\hline
$mr$ & reversed & reversed & $K=mrK$ & negative amphichiral\\
\hline
$m,r,mr$ & \multicolumn{3}{c|}{all of the above} & fully amphichiral\\
\hline
\end{tabular}
\end{center}

\smallskip
\noindent
The \emph{symmetry subgroup} of $K$ is
\[
\mathrm{Sym}(K)
:=
\{\,g\in G:g(K)=K\,\}
\le G,
\]
where $g(K)=K$ is understood as equality of oriented knot types in
$S^3$. This is a subgroup because the operations fixing $K$ are closed
under composition and inverses.

We say that $K$ is \emph{positive amphichiral} if
\[
m\in\mathrm{Sym}(K),
\]
and \emph{negative amphichiral} if
\[
mr\in\mathrm{Sym}(K).
\]
The knot is \emph{invertible} if $r\in\mathrm{Sym}(K)$, and it is
\emph{fully amphichiral} if
\[
\mathrm{Sym}(K)=G.
\]
Thus a fully amphichiral knot is simultaneously positive amphichiral,
negative amphichiral, and invertible.

We call $K$ \emph{amphichiral} if
\[
m\in\mathrm{Sym}(K)
\qquad\text{or}\qquad
mr\in\mathrm{Sym}(K),
\]
that is, if $K$ agrees with its mirror after possibly reversing the
orientation of the knot. Throughout the classification statements,
the phrases ``positive but not negative'' and ``negative but not
positive'' denote the two non-fully-amphichiral cases. In the examples
and appendix tables, the shorter labels ``positive'' and ``negative''
are used for these two exclusive symmetry types, while ``fully'' is
listed separately.
\end{definition}

\noindent Next, for the braid word
$\beta=\sigma_{i_1}^{\varepsilon_1}\sigma_{i_2}^{\varepsilon_2}\cdots
\sigma_{i_k}^{\varepsilon_k}$ of a knot, the two operations act as follows.
\begin{enumerate}[label=\textup{(\alph*)}]
\item \emph{Mirror:} $m(\beta)=\sigma_{i_1}^{-\varepsilon_1}
\sigma_{i_2}^{-\varepsilon_2}\cdots\sigma_{i_k}^{-\varepsilon_k}$. Flipping the sign of
every exponent yields the braid whose closure is the mirror image; thus
$\widehat{m(\beta)}=m(\widehat{\beta})$.
\item \emph{Reverse:} $r(\beta)=\sigma_{i_k}^{\varepsilon_k}\cdots
\sigma_{i_2}^{\varepsilon_2}\sigma_{i_1}^{\varepsilon_1}$, i.e.\ the word read backwards.
Reversing the orientation and reading the braid from bottom to top preserves the
sign and the strand position of each crossing while reversing only their order.
Hence $\widehat{r(\beta)}=r(\widehat{\beta})$.
\end{enumerate}

For a detailed illustration of the mirror and reverse operations on a braid word and their effects on its closure, see \Cref{fig:mirror-reverse}.

\section{Part I: Amphichiral Knots and the Braid Index}\label{sec:partI}

\noindent In this part we prove \Cref{thm:A}. We first record the relevant numerical
invariants of a braid and the Jones conjecture, deduce that minimal-strand
representatives have a well-defined exponent sum, and then run the parity argument.

\begin{definition}[Algebraic length, writhe, minimal-strand representative]\label{def:length}
\leavevmode
\begin{enumerate}[label=\textup{(\alph*)}]
\item Let $\beta=\sigma_{i_1}^{\varepsilon_1}\sigma_{i_2}^{\varepsilon_2}\cdots
\sigma_{i_k}^{\varepsilon_k}\in B_{n}$ be a braid word, where $i_j\in\{1,\dots,n-1\}$
and $\varepsilon_j=\pm1$. We denote by $e$ the homomorphism from $B_{n}$ to $\mathbb{Z}$ defined by
$e(\sigma_{i})=1$. In other words,
\[
e(\beta)=\sum_{j=1}^{k}\varepsilon_j
\]
is the \emph{algebraic number} (algebraic length) of crossings of a braid
diagram representing $\beta$.
\item Let $D$ be an oriented link diagram. The \emph{writhe} of $D$ is
\[
w(D)=\sum_{x\in\operatorname{Cross}(D)}\operatorname{sign}(x).
\]
\item The \emph{braid index} of a link $L$ is the least integer $m$ for which $L$
is isotopic to the closure of some braid $\beta'\in B_{m}$. A braid attaining this
minimum is called a \emph{minimal-strand representative} of $L$.

\end{enumerate}
In a braid all strands run from top to bottom; hence $\sigma_{i}$ has sign $+1$ and
$\sigma_{i}^{-1}$ has sign $-1$. Consequently, if $D_\beta$ denotes the standard
closed-braid diagram of $\widehat{\beta}$, then
\[
w(D_\beta)=e(\beta).
\]
\end{definition}

\begin{theorem}[Jones conjecture (Dynnikov-Prasolov Theorem) {\cite[Theorem~9]{DP2013}}]\label{thm:jones}
Let braids $\beta\in B_{n}$ and $\gamma\in B_{m}$ close up to equivalent oriented
links, with $\gamma$ having the minimal possible number of strands $m$ for this
link type. Then
\[
\bigl|e(\beta)-e(\gamma)\bigr|\ \le\ n-m .
\]
\end{theorem}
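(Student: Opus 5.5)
Since this is the cited theorem of Dynnikov and Prasolov, I would not try to rebuild their proof in full. I would organise the argument around two claims, each of which corresponds to a piece of their work on rectangular diagrams and transverse links.

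Write $L$ for the common oriented closure of $\beta\in B_n$ and $\gamma\in B_m$. I use only moves on braids whose effect on the pair $(n,e)$ is explicit:
\begin{itemize}
\item conjugation and exchange moves preserve both $n$ and $e$;
\item a positive or negative Markov destabilization lowers $n$ by one and changes $e$ by exactly $\mp1$.
\end{itemize}
The plan is then as follows.

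\textbf{Claim 1 (monotone simplification).} Any $\beta\in B_n$ with $\widehat\beta=L$ can be carried to some minimal-strand representative $\gamma'\in B_m$ using only conjugations, exchange moves and destabilizations, with no stabilizations. Granting this, exactly $n-m$ destabilizations occur, and each shifts $e$ by $\pm1$. Hence $|e(\beta)-e(\gamma')|\le n-m$.

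\textbf{Claim 2 (invariance at minimal index).} Any two minimal-strand representatives $\gamma,\gamma'$ of $L$ satisfy $e(\gamma)=e(\gamma')$. Together with Claim 1 this gives the stated inequality at once.

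To prove Claim 2, I would reinterpret a closed braid as a transverse link in the standard contact structure, whose self-linking number is $sl=e-n$. The mirror-type transverse realisation has self-linking number $-e-n$. By the Bennequin inequality both quantities are bounded above over all braid representatives of $L$. Call the two suprema $\overline{sl}_+(L)$ and $\overline{sl}_-(L)$.

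Claim 2 would follow from showing that every minimal-strand representative attains both maxima simultaneously. Then
\[
e(\gamma)=\tfrac12\bigl(\overline{sl}_+(L)-\overline{sl}_-(L)\bigr)
\]
is determined by $L$ alone. To prove that minimal representatives attain both maxima, I would use Claim 1 in a refined form. Any representative can be reduced to minimal index while each individual step does not decrease $e-n$, and separately while each step does not decrease $-e-n$. Thus a braid maximising either self-linking number can be destabilised to minimal index without losing that maximality. It remains to show that all minimal-index braids share the same value of each self-linking number. For this I would pass to rectangular (grid) diagrams, where the braid and both transverse pictures are visible at once.

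The main obstacle is Claim 1 and its refined form: showing that a braid of non-minimal index can be simplified without ever stabilizing. My first attempt would follow Dynnikov's monotonic simplification for rectangular diagrams. I would convert $\beta$ into a rectangular diagram whose two Legendrian types record the two transverse realisations. Then I would show that if $\beta$ is not of minimal index, some sequence of exchange moves (non-increasing moves on rectangular diagrams) exposes an elementary destabilization of one of the two types. I would also need to verify that the simplification can always be chosen to respect the type we wish to keep maximal. This is exactly where the Dynnikov–Prasolov bypass and "generalised destabilization" machinery is required, and where I expect essentially all the difficulty to lie.
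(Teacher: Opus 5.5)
The paper does not prove this statement. It is quoted from Dynnikov--Prasolov and used only as a black box, to obtain \Cref{lem:writhe-invariant}. Judged as a route to their theorem, your outline has two genuine gaps.

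\textbf{Claim~1 is not available.} It is a monotonic-simplification statement for arbitrary links, and no such theorem is on hand. Dynnikov's monotonic simplification, which Dynnikov--Prasolov reprove, concerns the unknot; for general links they give only a \emph{criterion} for when a diagram admits a simplification. Claim~1 is also much stronger than it looks:
\begin{itemize}
\item Conjugations, exchange moves and positive destabilizations preserve the transverse closure, while a negative destabilization exhibits it as a transverse stabilization.
\item So if the transverse closure $T$ of $\beta$ does not transversely destabilize, your reduction uses only positive destabilizations. It therefore ends at a minimal braid with $e-m=sl(T)$.
\item The theorem itself forces every minimal braid to realise $\overline{sl}_+(L)$.
\end{itemize}
Hence Claim~1 would imply that every transverse representative of every link which does not destabilize has maximal self-linking number. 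That is a strong statement about transverse links which you cannot simply assume.

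\textbf{The argument for Claim~2 is circular.} Your ``refined form'' of Claim~1 is automatic: exchange moves and destabilizations of either sign never decrease $e-n$ or $-e-n$. It only shows that \emph{some} minimal braid attains $\overline{sl}_+$ and \emph{some}, possibly different, minimal braid attains $\overline{sl}_-$. The step you leave to grid diagrams is that all braids of index $m$ have the same value of $e-m$. That is Claim~2 verbatim.

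\textbf{The missing idea} is the one the Dynnikov--Prasolov proof is built on: the two kinds of simplification are mutually independent. In braid form, take $\beta\in B_n$ and $\gamma\in B_m$ with the same oriented closure. Then there is a braid $\beta'\in B_{n'}$ obtained from $\beta$ using only conjugations, exchange moves and \emph{positive} (de)stabilizations, and from $\gamma$ using only conjugations, exchange moves and \emph{negative} (de)stabilizations. The first kind of move preserves $e-n$ and the second preserves $-e-n$, so
\[
e(\beta')-n'=e(\beta)-n,\qquad -e(\beta')-n'=-e(\gamma)-m.
\]
Adding gives $2n'=n+m-e(\beta)+e(\gamma)$, and $n'\ge m$ gives $e(\beta)-e(\gamma)\le n-m$. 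Exchanging the roles of the two signs, or passing to mirror images, gives $e(\gamma)-e(\beta)\le n-m$.

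No monotone simplification is needed. Your intended input, that a minimal braid realises both $\overline{sl}_+$ and $\overline{sl}_-$, would on its own already give $|e(\beta)-e(\gamma)|\le n-m$ and make Claim~1 superfluous. In this argument it comes out as a corollary instead.
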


By employing \Cref{thm:jones}, one can verify that the exponent sum of a minimal-strand representative is an invariant of $L$, as follows.

\begin{lemma}\label{lem:writhe-invariant}
All minimal-strand closed-braid representatives of a link $L$ have the same
exponent sum. Consequently, the exponent sum $e$ of a minimal-strand
representative is an invariant of $L$.
\end{lemma}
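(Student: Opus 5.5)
The plan is to apply \Cref{thm:jones} twice, once in each direction, to two arbitrary minimal-strand representatives. Let $L$ be an oriented link of braid index $m$, and let $\gamma,\gamma'\in B_m$ be any two braids whose closures are both isotopic to $L$ as oriented links. We want to show that $e(\gamma)=e(\gamma')$.

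First we apply \Cref{thm:jones} with $\beta=\gamma'\in B_m$ (so $n=m$) and with $\gamma$ as the minimal-strand representative. The hypotheses hold: both braids close to equivalent oriented links, and $\gamma$ has the minimal possible number of strands $m$. The theorem gives
\[
\bigl|e(\gamma')-e(\gamma)\bigr|\le m-m=0,
\]
so $e(\gamma')=e(\gamma)$. The inequality is symmetric, so there is no need to swap the roles, although one could do so harmlessly.

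Since $\gamma$ and $\gamma'$ were arbitrary minimal-strand representatives, the map $L\mapsto e(\gamma)$ for any such $\gamma$ is well defined. It depends only on the oriented link type of $L$, and is therefore an invariant of $L$.

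There is no genuine obstacle here. The only care needed is to check that \Cref{thm:jones} is stated for oriented links, and that the braid index is taken for the same oriented type. This ensures that two minimal representatives of $L$ fall under its hypotheses with $n=m$. Orientation matters in this step because $e$ is sensitive to crossing signs, but a braid closure carries a canonical orientation, so the statement is consistent as written.
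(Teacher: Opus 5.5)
Your proof is correct and is essentially identical to the paper's. Both take two minimal-strand representatives in the same $B_n$ and apply \Cref{thm:jones} with equal strand numbers to get $\bigl|e(\beta)-e(\beta')\bigr|\le n-n=0$; your remark that the invariant is one of oriented link types is a sensible clarification the paper leaves implicit.
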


\begin{proof}
Let $n$ be the braid index of $L$, and let $\beta,\beta'\in B_{n}$ be any two
minimal-strand representatives of $L$. By \Cref{thm:jones},
\[
\bigl|e(\beta)-e(\beta')\bigr|\ \le\ n-n=0 ,
\]
hence $e(\beta)=e(\beta')$. Thus the
minimal exponent sum is well-defined and is an invariant of $L$.
\end{proof}

Combining these results, we can establish \Cref{thm:A}.

\begin{proof}[Proof of \Cref{thm:A}]
Let $n=b(K)$, and choose a minimal-strand representative $\beta\in B_n$ of
$K$. By \Cref{lem:writhe-invariant}, the exponent sum of a minimal-strand
representative depends only on the oriented link type, so we may define
$w_{\min}(K):=e(\beta)$.

We first check explicitly that mirroring preserves minimality. The braid
$m(\beta)\in B_n$ represents $mK$, and hence $b(mK)\le n=b(K)$. Applying the
same inequality to $mK$ and using $m(mK)=K$ gives $b(K)\le b(mK)$. Therefore
$b(mK)=b(K)=n$, so $m(\beta)$ is a minimal-strand representative of $mK$, with
$e(m(\beta))=-e(\beta)$. Reversing the orientation changes neither the number
of strands nor the exponent sum. Thus $r(m(\beta))$ is a minimal-strand
representative of $mrK$ and also has exponent sum $-e(\beta)$.

If $K$ is positive amphichiral, then $K\simeq mK$ as oriented knot types;
if $K$ is negative amphichiral, then $K\simeq mrK$. In either case,
\Cref{lem:writhe-invariant} gives $e(\beta)=-e(\beta)$. Consequently,
\begin{equation}\label{eq:wmin-zero}
w_{\min}(K)=e(\beta)=0.
\end{equation}

Write $\beta=\sigma_{i_1}^{\varepsilon_1}\cdots
\sigma_{i_\ell}^{\varepsilon_\ell}$, where $\varepsilon_j\in\{\pm1\}$. Since
$\varepsilon_j\equiv1\pmod2$ for every $j$, we have
$e(\beta)\equiv\ell\pmod2$, and hence $\ell$ is even. On the other hand,
$\pi(\beta)$ is a product of $\ell$ transpositions. Because
$\widehat{\beta}$ is a knot, $\pi(\beta)$ is also an $n$-cycle, and therefore
\[
1=(-1)^\ell=\operatorname{sgn}(\pi(\beta))=(-1)^{n-1}.
\]
Thus $n$ is odd.
\end{proof}

By applying the contrapositive of the theorem, it follows that the $(2,k)$-torus knot is not amphichiral.

\begin{corollary}\label{cor:torus-chiral}
For an odd integer $k \ge 3$, the $(2,k)$-torus knot has braid index $2$ and is
chiral.
\end{corollary}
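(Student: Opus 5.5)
The corollary has two claims: the $(2,k)$-torus knot $T_{2,k}$ has braid index exactly $2$, and it is chiral. Chirality will then follow from \Cref{thm:A} by contraposition.

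For the braid index, I take the braid $\sigma_1^{k}\in B_2$. Its closure is $T_{2,k}$, so $b(T_{2,k})\le 2$. Since $k$ is odd, $\pi(\sigma_1^k)=s_1^k=s_1$ is a $2$-cycle, and the closure is indeed a knot rather than a two-component link. For the lower bound, I argue that $b(T_{2,k})\ne 1$. The only $1$-strand braid is the trivial braid, whose closure is the unknot. So it suffices to know that $T_{2,k}$ is nontrivial for $k\ge3$. I will cite a standard invariant for this, for instance the determinant $\det(T_{2,k})=k\neq 1$, or tricolorability/genus $(k-1)/2\ge1$, or simply the classical fact that torus knots $T_{p,q}$ with $|p|,|q|\ge2$ are nontrivial. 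Hence $b(T_{2,k})=2$.

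For chirality, I note that $2$ is even. By \Cref{thm:A}, every amphichiral knot has odd braid index, so $T_{2,k}$ cannot be amphichiral, i.e.\ it is chiral.

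There is also a direct check that avoids \Cref{thm:A}'s parity step. Since $\sigma_1^k$ is a minimal-strand representative, \Cref{lem:writhe-invariant} makes $e(\sigma_1^k)=k\neq0$ an invariant. This contradicts \eqref{eq:wmin-zero}, which would force the minimal exponent sum of an amphichiral knot to vanish. The only genuine obstacle is the nontriviality of $T_{2,k}$, which I will take from a standard invariant rather than reprove.
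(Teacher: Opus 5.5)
Your proposal is correct and follows essentially the same route as the paper: realize $T_{2,k}$ as the closure of $\sigma_1^k\in B_2$, and use its nontriviality to get braid index exactly $2$ (the paper simply asserts this nontriviality, while you justify it, e.g.\ via $\det(T_{2,k})=k$). Chirality then follows from the contrapositive of \Cref{thm:A}, and your alternative direct argument via \Cref{lem:writhe-invariant} and $e(\sigma_1^k)=k\neq0$ is also valid, being just the first half of the proof of \Cref{thm:A} specialized to this braid.
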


\begin{proof}
The $(2,k)$-torus knot is naturally represented as the closure of the $2$-braid
$\sigma_1^k\in B_2$. Since the $(2,k)$-torus knot is not the unknot for $k\ge 3$,
its braid index is strictly greater than $1$, and thus exactly $2$. By the
contrapositive of \Cref{thm:A}, any knot of braid index $2$ cannot be
amphichiral; since the $(2,k)$-torus knot has braid index $2$, it follows that
it is chiral.
\end{proof}

\section{Part II: Classification of Prime Amphichiral Knots of Braid Index Three}\label{sec:partII}

\noindent The engine of Part~II is a single concrete observation. Write a braid-index-$3$
alternating knot as the closure of a standard form
$\beta_c=\sigma_1^{a_1}\sigma_2^{-b_1}\cdots\sigma_1^{a_n}\sigma_2^{-b_n}$, and record only
its exponents in a word $c=(a_1,b_1,\dots,a_n,b_n)$. Then the two operations that
decide amphichirality act on $c$ in the simplest possible way: taking the mirror image
rotates $c$ by one entry, and taking the mirror-reverse reflects it
(\Cref{lem:dihedral}). Amphichirality of $\widehat{\beta_c}$ thus becomes a purely
combinatorial question of whether some rotation or reflection returns $c$ to
itself. Reading this condition off yields \Cref{thm:B,thm:C}. The rest of the section
turns this observation into the classification, drawing on Markov's theorem, the Murasugi
normal form, and the Birman--Menasco classification of closed $3$-braids.

\begin{definition}[$M$-equivalence {\cite{KasselTuraev}}]\label{def:Mequiv}
Two braids $\beta\in B_{n}$ and $\beta'\in B_{m}$ (possibly with $n\ne m$) are
\emph{$M$-equivalent} if one can be obtained from the other by a finite sequence
of the following two Markov moves.
\begin{enumerate}[label=\textup{(\alph*)}]
\item \emph{Markov move I (conjugation):} within $B_{n}$,
$\beta\leftrightarrow\alpha\beta\alpha^{-1}$ for $\alpha\in B_{n}$. That is, a braid
is replaced by a conjugate, keeping the number of strands fixed.
\item \emph{Markov move II ((de)stabilization):} for
$\beta\in B_{n}\subset B_{n+1}$, $\beta\leftrightarrow\beta\sigma_{n}^{\pm1}\in
B_{n+1}$. That is, one strand together with a single crossing is added
(stabilization) or removed (destabilization).
\end{enumerate}
\end{definition}

\begin{theorem}[Markov Theorem {\cite[Theorem~2.8]{KasselTuraev}}]\label{thm:markov}
Two braids (possibly with different numbers of strands) have isotopic closures
in $\mathbb{R}^{3}$ if and only if these braids are $M$-equivalent.
\end{theorem}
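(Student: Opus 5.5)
The statement is the classical theorem of Markov, which the paper quotes from \cite{KasselTuraev}. I would prove its two implications separately, working throughout with diagrams in the plane and closed braids drawn around a fixed axis.

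\emph{Markov moves preserve the closure.} This is the easy direction. For conjugation, the closure of $\alpha\beta\alpha^{-1}$ contains $\alpha$ and $\alpha^{-1}$ adjacent along the closing strands. Sliding $\alpha$ once around the solid torus cancels it against $\alpha^{-1}$, which is an isotopy in the complement of the axis, so $\widehat{\alpha\beta\alpha^{-1}}\simeq\widehat\beta$. For stabilization, the extra strand of $\beta\sigma_n^{\pm1}$ together with its single crossing closes to a kink. This kink is removed by one Reidemeister~I move, giving $\widehat{\beta\sigma_n^{\pm1}}\simeq\widehat\beta$.

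\emph{Isotopic closures imply $M$-equivalence.} This is the substantial direction, and I would follow the diagrammatic route of Vogel and Traczyk rather than Birman's original argument. First, to any oriented link diagram $D$ I attach a braid $V(D)$ by Vogel's algorithm. The algorithm repeatedly applies Reidemeister~II moves between incoherently oriented Seifert circles in the same region. This lowers the height invariant until all Seifert circles are nested and coherently oriented, and the resulting diagram is then read off as a closed braid. If $D$ is already the closure of a braid $\beta$, no moves are needed and $V(D)=\beta$ up to conjugation. Second, I would show that $V(D)$ is well defined up to $M$-equivalence. That is, any two sequences of Vogel moves lead to $M$-equivalent braids. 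I would argue by induction on the height, comparing two different first moves and showing that they can be completed to a common diagram up to conjugation and stabilization. Third, I would show that if $D'$ is obtained from $D$ by a single Reidemeister move, then $V(D)$ and $V(D')$ are $M$-equivalent. Given these three facts, Reidemeister's theorem connects $\widehat\beta$ to $\widehat{\beta'}$ by finitely many Reidemeister moves, and chaining the $M$-equivalences proves the claim.

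\emph{Main obstacle.} I expect the hard part to be the third step, together with the well-definedness in the second step. The difficulty is that a Reidemeister move can change the Seifert circle structure: Reidemeister~I and~II moves may create or merge Seifert circles, and a Reidemeister~III move may sit across incoherent circles. My first attempt would be to apply Vogel moves away from the local disk of the Reidemeister move until the relevant circles are coherent. The move then takes place inside a braided region. There, Reidemeister~II and~III become the braid relations, Reidemeister~I becomes a (de)stabilization, and moving the local disk past the axis becomes a conjugation. The delicate point is to check that every configuration of orientations in the local disk can be reduced to one of these cases. This case analysis is where most of the work would lie.
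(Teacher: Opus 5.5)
\textbf{A gap in Step 3.} The paper does not prove this theorem; it quotes it from Kassel--Turaev. Your easy direction is fine. Your three-step outline of the hard direction is the standard Vogel--Traczyk architecture: reduce to a braided diagram by Vogel moves, show the reduction is independent of choices up to $M$-equivalence, then show invariance under Reidemeister moves. However, the hard direction is only a plan, and the mechanism you give for the third step fails exactly where the content lies. In a closed-braid diagram every strand turns monotonically around the axis. So a small disk that misses the axis never contains a kink, a pair of antiparallel strands, or an R3 triangle in which one strand runs against the other two. Vogel moves performed outside the disk do not alter its interior. Hence for an R1 move, an R2 move between antiparallel strands, or a non-braid-like R3 move, no reduction ``away from the local disk'' can make the move sit inside a braided region. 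Note that the braid relation $\sigma_i\sigma_i^{-1}=1$ realizes only the R2 move between \emph{parallel} strands. For example, the small Seifert circle created by an R1 kink lies entirely inside the disk. If it is incoherent with a neighbouring circle, every Vogel move that could repair this must use one of its edges, which lie inside the disk.

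\textbf{What the standard proof does instead.} It handles these incoherent moves directly rather than braiding them. An R2 move between antiparallel arcs of two \emph{distinct} Seifert circles facing a common region is itself a Vogel move, since such circles are automatically incoherent; so it is absorbed by the independence statement of your second step. Non-braid-like R3 moves are rewritten as braid-like R3 moves combined with R2 moves. The remaining moves change the number of Seifert circles, which Vogel moves preserve. An R1 move adds one circle, and an R2 move between antiparallel arcs of a \emph{single} circle adds two. The resulting braids therefore have different strand numbers, and you need an explicit reduction of the new diagram showing that its braid differs from the old one by stabilizations and conjugations. Also, conjugation corresponds to moving crossings past the base ray along which the closed braid is cut open, not past the axis. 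Pushing strands across the axis is not an isotopy of the closed braid in the solid torus, and it is precisely where stabilizations enter. The second step also needs a real argument: two Vogel moves on overlapping pairs of circles do not simply commute, because the first merges the circles it acts on. Until both steps are carried out, the hard direction remains unproved.
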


\noindent We begin the classification of prime amphichiral knots of braid index~$3$ by recalling the Murasugi classification of $3$-braids. This theorem expresses every $3$-braid in a uniform normal form consisting of a power of the full twist and one of three basic types. It will provide the framework for identifying the braid representatives relevant to our classification and for introducing the standard form used throughout the remainder of this section.

\begin{theorem}[Murasugi classification of $3$-braids {\cite[Theorem~6.1]{DasbachLowrance2011}}]\label{thm:murasugi}
Write $\Delta=\sigma_{1}\sigma_{2}\sigma_{1}$, so that
$\Delta^{2}=(\sigma_{1}\sigma_{2}\sigma_{1})^{2}=(\sigma_{1}\sigma_{2})^{3}$. Every
$3$-braid is conjugate to a braid of the form
\[
(\sigma_{1}\sigma_{2})^{3d}\,w=\Delta^{2d}\,w,\qquad d\in\mathbb{Z},
\]
where $w$ is one of the following types:
\begin{enumerate}[label=\textup{(\roman*)}]
\item $\sigma_{1}^{a_1}\sigma_{2}^{-b_1}\cdots\sigma_{1}^{a_\ell}\sigma_{2}^{-b_\ell}$
with $\ell\ge 1$ and all $a_i,b_i>0$;
\item $\sigma_{2}^{k}$ for some $k\in\mathbb{Z}$;
\item $\sigma_{1}^{m}\sigma_{2}^{-1}$ with $m\in\{-1,-2,-3\}$.
\end{enumerate}
Moreover, the exponent $d$ and the type of $w$ are invariants of the conjugacy
class: the representative is unique in Types~\textup{(ii)} and~\textup{(iii)}, and in
Type~\textup{(i)} it is unique up to a cyclic permutation of the syllable pairs
$(a_i,b_i)$.
\end{theorem}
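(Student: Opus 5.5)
The plan is to pass to the quotient of $B_3$ by its center and use the classical conjugacy classification in $PSL(2,\mathbb{Z})$. Recall that the center of $B_3$ is generated by $\Delta^2=(\sigma_1\sigma_2)^3$, and that the representation
\[
\sigma_1\mapsto R=\begin{pmatrix}1&1\\0&1\end{pmatrix},\qquad
\sigma_2\mapsto L^{-1}=\begin{pmatrix}1&0\\-1&1\end{pmatrix}
\]
induces an isomorphism $B_3/\langle\Delta^2\rangle\cong PSL(2,\mathbb{Z})\cong \mathbb{Z}/2*\mathbb{Z}/3$. Under this map $\sigma_2^{-1}\mapsto L$, so a type~(i) word $\sigma_1^{a_1}\sigma_2^{-b_1}\cdots\sigma_1^{a_\ell}\sigma_2^{-b_\ell}$ maps to the positive word $R^{a_1}L^{b_1}\cdots R^{a_\ell}L^{b_\ell}$. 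I will first classify conjugacy classes in $PSL(2,\mathbb{Z})$, then lift back to $B_3$ using centrality of $\Delta^2$.

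\textbf{Step 1 (quotient).} Split a class $[A]$ in $PSL(2,\mathbb{Z})$ by $|\operatorname{tr}A|$.
\begin{itemize}
\item If $|\operatorname{tr}A|>2$ ($A$ hyperbolic), the standard reduction theory (Gauss reduction, or the periodic continued fraction of a fixed point of $A$) shows that $\pm A$ is conjugate to a positive word $R^{a_1}L^{b_1}\cdots R^{a_\ell}L^{b_\ell}$ with all exponents positive. This word is unique up to cyclic permutation of the pairs $(a_i,b_i)$.
\item If $|\operatorname{tr}A|=2$ ($A$ parabolic or trivial), $A$ is conjugate to $\pm L^{-k}$, the image of $\sigma_2^k$, for a unique $k\in\mathbb{Z}$.
\item If $|\operatorname{tr}A|<2$ ($A$ elliptic), $A$ has finite order. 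Using the free product structure, $A$ is conjugate to one of the three nontrivial elliptic classes, the powers of the order-$3$ and order-$2$ generators. I will check directly that these coincide with the images of $\sigma_1^{m}\sigma_2^{-1}$ for $m=-1,-2,-3$; for instance $\sigma_1^{-1}\sigma_2^{-1}=(\sigma_2\sigma_1)^{-1}$ has order $3$ modulo the center.
\end{itemize}
The three types are then pairwise non-conjugate, because traces and the property of being a positive $RL$-word separate them.

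\textbf{Step 2 (lifting).} Take $\beta\in B_3$. Step~1 gives a normal form $w$ and some $\gamma\in B_3$ with $\gamma\beta\gamma^{-1}=\Delta^{2d}w$ modulo the center. Since $\Delta^2$ is central, this identity holds up to a central factor, and absorbing that factor gives $\gamma\beta\gamma^{-1}=\Delta^{2d}w$ exactly for some $d\in\mathbb{Z}$.

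\textbf{Step 3 (uniqueness of $d$).} Suppose $\Delta^{2d}w$ and $\Delta^{2d'}w'$ are conjugate in $B_3$. Their images are conjugate in the quotient, so by Step~1 the words $w$ and $w'$ have the same type and agree (up to cyclic permutation of syllable pairs in type~(i)). In particular $e(w)=e(w')$. The exponent sum $e$ is a conjugacy invariant and $e(\Delta^2)=6$, so $6d+e(w)=6d'+e(w)$, which forces $d=d'$. This gives all the stated invariance and uniqueness claims.

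The main obstacle is Step~1 in the hyperbolic case. One has to show both that every hyperbolic class contains a word in $R$ and $L$ with positive exponents and that this word is unique up to cyclic rotation. I would prove this with the Farey tessellation / cutting-sequence argument: the axis of $A$ crosses Farey triangles, turning left or right each time, and the periodic sequence of turns is exactly the cyclic $RL$-word. Well-definedness up to rotation comes from choosing a different starting edge, and conjugation only moves the axis by an element of $PSL(2,\mathbb{Z})$, which preserves the tessellation. A secondary bookkeeping issue is the sign ambiguity $\pm A$ together with matching the elliptic representatives to the specific exponents $m=-1,-2,-3$. I would settle this by direct computation of the three images.
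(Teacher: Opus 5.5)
The paper contains no proof of this statement. It is imported as a citation of Murasugi's theorem via Dasbach--Lowrance, so your argument supplies what the paper takes on trust.

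Your route is the standard one and it is sound. You pass to $B_3/\langle\Delta^2\rangle\cong\mathrm{PSL}(2,\mathbb{Z})$ and let $|\mathrm{tr}|$ sort classes into hyperbolic, parabolic/trivial and elliptic, matching Types (i), (ii), (iii). You then lift a conjugator $g\in\mathrm{PSL}(2,\mathbb{Z})$ to $\gamma\in B_3$, so that $\gamma\beta\gamma^{-1}w^{-1}\in\langle\Delta^2\rangle$, and fix $d$ by $e(\Delta^{2d}w)=6d+e(w)$.

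One claim needs tightening: trace does not separate $m=-1$ from $m=-3$. Both images have $|\mathrm{tr}|=1$ and order $3$, and even knowing that the generator of the $\mathbb{Z}/3$ factor is not conjugate to its inverse does not tell you which of the two each word hits. The cleanest fix is the abelianization $\mathrm{PSL}(2,\mathbb{Z})^{\mathrm{ab}}\cong\mathbb{Z}/6$, which is the exponent sum mod $6$. Concretely, $e(\Delta^{2d}\sigma_1^{m}\sigma_2^{-1})=6d+m-1\equiv 4,3,2\pmod 6$ for $m=-1,-2,-3$. This separates the three Type-(iii) words and determines $d$ at once.

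Compared with the bare citation, your approach proves the statement in exactly the paper's convention. The paper writes Type (i) as $\sigma_1^{a_i}\sigma_2^{-b_i}$ but keeps Type (iii) as $\sigma_1^{m}\sigma_2^{-1}$ with $m<0$. Murasugi's theorem is often quoted with Type (i) as $\sigma_1^{-a_i}\sigma_2^{b_i}$, so the citation tacitly needs a translation: conjugate by $\Delta$ and cycle the leading syllable to the back, as in the proof of \Cref{lem:dihedral}(a), which leaves $d$ unchanged. Your proof needs no translation, since each type is detected by a conjugacy invariant and you only use that every hyperbolic class contains a positive $RL$-word.

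The price is that the hyperbolic reduction is itself a classical theorem: the existence of that positive word and its uniqueness up to rotation of syllable pairs. You correctly locate the real content there, and the Farey cutting-sequence argument is the right tool. Until it is written out, though, that step still rests on the literature, just as the paper's citation does.
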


\begin{definition}[Murasugi normal form]\label{def:murasugi_nf}
A $3$-braid is said to be in \emph{Murasugi normal form} if it is written as
\[
\Delta^{2d}w=(\sigma_{1}\sigma_{2})^{3d}w,\qquad d\in\mathbb{Z},
\]
with $w$ of one of Types~\textup{(i)}--\textup{(iii)} of \Cref{thm:murasugi}.
We call $\Delta^{2}$ the \emph{full twist} and the integer $d$ the
\emph{full-twist power} of the braid.
\end{definition}

\noindent The full twist is central in $B_{3}$. Indeed, the braid relation
$\sigma_{1}\sigma_{2}\sigma_{1}=\sigma_{2}\sigma_{1}\sigma_{2}$ gives
\[
\Delta\sigma_{1}\Delta^{-1}=\sigma_{2},\qquad
\Delta\sigma_{2}\Delta^{-1}=\sigma_{1},
\]
so conjugation by the half twist $\Delta$ interchanges the two generators, and
conjugation by $\Delta^{2}$ fixes each of them. Hence $\Delta^{2}$ commutes with all of
$B_{3}$, and in fact $Z(B_{3})=\langle\Delta^{2}\rangle\cong\mathbb{Z}$. See, e.g.,
\cite[Theorem~1.24]{KasselTuraev}. In particular the
factor $\Delta^{2d}$, which represents $d$ full twists, commutes with $w$, so the two
factors in \Cref{def:murasugi_nf} may be interchanged freely. In this terminology,
\Cref{thm:murasugi} says that every $3$-braid is conjugate to one in Murasugi normal
form, and that the full-twist power and the type of $w$ are invariants of the conjugacy
class. Throughout the rest of this section we abbreviate
\[
h:=\Delta^{2}=(\sigma_{1}\sigma_{2})^{3},
\]
so that a Murasugi normal form reads $h^{d}w$.

\begin{lemma}[Twist-free Type-\textup{(i)} forms]\label{lem:alt-standard}
Let
\[
\beta_c=
\sigma_{1}^{a_1}\sigma_{2}^{-b_1}\cdots
\sigma_{1}^{a_n}\sigma_{2}^{-b_n},
\qquad a_i,b_i\ge1.
\]
Then the standard closed-braid diagram $D_c$ of $\widehat{\beta_c}$ is
alternating. Consequently, $\widehat{\beta_c}$ is an alternating link; if the
closure is connected, it is an alternating knot.
\end{lemma}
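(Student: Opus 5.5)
The plan is to check the alternating condition directly on the closed-braid diagram $D_c$. A diagram is alternating if, travelling along each component, one meets over- and under-crossings alternately. For braid diagrams this has a convenient local reformulation: it suffices to show that along every strand segment between two consecutive crossings, the crossing at one end is passed over and the crossing at the other end is passed under. I will fix the convention of \Cref{fig:generators}: in $\sigma_1$ the strand moving from position $1$ to position $2$ passes over. Then I record, for each strand position, what happens at each kind of crossing.

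\emph{Position~1} meets only $\sigma_1^{\pm1}$ crossings. Under $\sigma_1$ the strand leaving position~1 goes over, and the strand entering position~1 (from position~2) goes under. Position~1 is never involved in a $\sigma_2^{-1}$ crossing.

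\emph{Position~3} meets only $\sigma_2^{\pm1}$ crossings. Under $\sigma_2^{-1}$ the crossings are flipped, so the strand coming from or going to position~3 behaves like position~1 in $\sigma_1$ with the roles swapped. Concretely, the strand at position~3 is consistently on one side, and at position~2 consistently on the other.

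The core step is to list, for each position $p\in\{1,2,3\}$, the over/under status of the strand segment occupying position $p$ at the top and at the bottom of each crossing.

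\emph{Positions~1 and~3.} A segment at position~1 runs between two consecutive $\sigma_1$ crossings, since $\sigma_2^{-1}$ letters do not touch it. At each $\sigma_1$, the segment arriving at position~1 (bottom endpoint) was under, and the segment leaving position~1 (top endpoint) goes over. Hence the segment is under at its top end and over at its bottom end, or vice versa, so the two ends alternate. The same argument applies to position~3 with the $\sigma_2^{-1}$ crossings.

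\emph{Position~2.} This is the real content, and where the sign choice $\sigma_1^{+}$ versus $\sigma_2^{-}$ matters. At a $\sigma_1$ crossing, the segment at position~2 on one side goes over and on the other side goes under. Flipping the sign of $\sigma_2$ makes the position-2 segment at a $\sigma_2^{-1}$ crossing have the same top/bottom pattern as at a $\sigma_1$ crossing. Two cases then need checking.

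\emph{Between two crossings of the same type.} Here alternation follows exactly as for positions~1 and~3.

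\emph{Between a $\sigma_1$ and a $\sigma_2^{-1}$.} Here I check that the over/under pattern still alternates. This is precisely where a positive $\sigma_2$ would fail and where the alternating signs save us.

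Since the word is cyclic and the closure identifies top with bottom, the same local check covers the wraparound segments. This is because the last letter $\sigma_2^{-b_n}$ is followed cyclically by $\sigma_1^{a_1}$, which is again a transition between different types. Thus every segment of $D_c$ alternates, so $D_c$ is alternating, and $\widehat{\beta_c}$ is an alternating link. If it is connected, it is an alternating knot.

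I expect the main obstacle to be the bookkeeping at position~2: fixing the over/under convention for $\sigma_i$ versus $\sigma_i^{-1}$ consistently and verifying the mixed transitions. A useful sanity check is the chessboard viewpoint. $D_c$ alternates if and only if all crossings have the same Tait sign relative to a checkerboard colouring. With the regions between strands~1–2 and~2–3 coloured oppositely, $\sigma_1$ and $\sigma_2^{-1}$ have the same Tait sign. I would present this as the cleaner argument if the strand-by-strand check becomes messy.
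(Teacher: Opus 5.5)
Your proof is correct and is essentially the paper's argument: a direct local check that every edge of $D_c$ runs from an over-passage to an under-passage, with the sign change from $\sigma_1$ to $\sigma_2^{-1}$ compensating for the change of generator at position~2. Your observation that both crossing types give the same top/bottom pattern at position~2 already settles the mixed case you defer, your position-by-position bookkeeping is slightly more explicit than the paper's syllable-and-junction phrasing (it also covers the position-$1$ and position-$3$ edges that pass a syllable of the other generator), and your checkerboard remark is a valid alternative proof.
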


\begin{proof}
Every $\sigma_1$-crossing of $D_c$ is positive and every $\sigma_2$-crossing
is negative. Inside a syllable $\sigma_i^{\pm k}$, the two participating
strands form a twist region and each strand passes alternately over and under.
At the junction between consecutive $\sigma_1$- and $\sigma_2$-syllables, the
change of generator together with the opposite signs continues the same
over--under alternation. The closing arcs introduce no crossings, so the
pattern continues around every component. Hence $D_c$ is alternating.
\end{proof}

\noindent Building on \Cref{def:murasugi_nf}, we introduce the encoding used
throughout Part~II. It records the twist-free case $d=0$ whose factor $w$ is of
Type~\textup{(i)}. By \Cref{lem:alt-standard}, its standard closure diagram is
alternating, although the closure may have more than one component.

\begin{definition}[Alternating standard form]\label{def:standard_form}
By a \emph{word} of length $2n$ \textup{(}$n\ge1$\textup{)} we mean a finite sequence
\[
c=(c_{1},c_{2},\dots,c_{2n}),\qquad c_{i}\in\mathbb{Z}_{>0},
\]
of strictly positive integers. A word is a sequence, not an equivalence class: two words
are equal precisely when they have the same length and the same entry in every position.
We name the entries in alternating fashion,
\[
a_{i}:=c_{2i-1},\qquad b_{i}:=c_{2i}\qquad(1\le i\le n),
\]
so that $c=(a_{1},b_{1},a_{2},b_{2},\dots,a_{n},b_{n})$, and we call $(a_{i},b_{i})$ the
$i$-th \emph{syllable pair} of $c$. All indices of entries are read modulo $2n$, with
representatives in $\{1,\dots,2n\}$; thus $c_{i+2n}=c_{i}$ by convention. To such a word
we associate the $3$-braid
\[
\beta_c = \sigma_{1}^{a_1}\sigma_{2}^{-b_1}\sigma_{1}^{a_2}\sigma_{2}^{-b_2}\cdots
\sigma_{1}^{a_n}\sigma_{2}^{-b_n},
\]
the alternating product of $n$ positive $\sigma_{1}$-syllables and $n$ negative
$\sigma_{2}$-syllables whose exponents are the entries of $c$: the odd-position entries
$a_{i}$ are the $\sigma_{1}$-exponents and the even-position entries $b_{i}$ are the
$\sigma_{2}$-exponents. We say $\beta\in B_{3}$ is in \emph{alternating standard form} if
$\beta=\beta_c$ for some word $c$. That is, $\beta$ is in Murasugi normal
form (\Cref{def:murasugi_nf}) with $d=0$ and $w$ of Type~(i). Its closure
$\widehat{\beta_c}$ is then an alternating link by \Cref{lem:alt-standard}, and $c$ is
the word of syllable exponents associated with $\beta_c$.
\end{definition}

\noindent Two operations act on the words of \Cref{def:standard_form}. In terms of the
entries $c=(c_{1},\dots,c_{2n})$, with indices read modulo $2n$, they are
\begin{enumerate}[label=\textup{(\alph*)}]
\item the \emph{rotation} $\rho$, given by $\rho(c)_{i}=c_{i+1}$, that is,
\[
\rho\colon\ (a_1,b_1,a_2,b_2,\dots,a_n,b_n)\longmapsto
(b_1,a_2,b_2,\dots,a_n,b_n,a_1);
\]
\item the \emph{reflection} $\tau$, given by $\tau(c)_{i}=c_{2n+1-i}$, that is,
\[
\tau\colon\ (a_1,b_1,a_2,b_2,\dots,a_n,b_n)\longmapsto
(b_n,a_n,\dots,b_1,a_1).
\]
\end{enumerate}
Both preserve the length $2n$ and the positivity of the entries, so both act on the set of
words of length $2n$. Each of them sends odd positions to even positions and vice versa,
hence interchanges the roles of the $\sigma_{1}$-exponents $a_{i}$ and the
$\sigma_{2}$-exponents $b_{i}$. From the displayed formulas one checks
$\rho^{2n}=\tau^{2}=1$ and $\tau\rho\tau^{-1}=\rho^{-1}$, so $\rho$ and $\tau$ generate a
dihedral group $D_{4n}$ of order $4n$ acting on the words of length $2n$.

\smallskip
\noindent Two features of a word, both expressed through this action, drive the
classification.
\begin{itemize}
\item We call $c$ a \emph{palindrome} if there is an integer $s\ge0$ with
\[
(\rho^{s}c)_{i}=(\rho^{s}c)_{2n+1-i}\qquad\text{for all }i\in\{1,\dots,2n\},
\]
that is, if some rotation of $c$ reads the same forwards as backwards.
\item The \emph{period} of $c$ is the least integer $p>0$ with $\rho^{p}(c)=c$.
We say that $c$ \emph{has odd period} when this least period $p$ is odd.
\end{itemize}

\noindent It is convenient to rewrite the palindrome condition as a fixed-point condition
in $D_{4n}$. By the definition of $\tau$, the equality
$(\rho^{s}c)_{i}=(\rho^{s}c)_{2n+1-i}$ for all $i$ says precisely that $\rho^{s}(c)$ is
fixed by $\tau$, that is, $\tau\rho^{s}(c)=\rho^{s}(c)$. Applying $\rho^{-s}$ to both
sides gives $\rho^{-s}\tau\rho^{s}(c)=c$, and $\tau\rho^{s}=\rho^{-s}\tau$ evaluates the
left-hand operator as $\rho^{-s}\tau\rho^{s}=\rho^{-2s}\tau=\tau\rho^{2s}$. Hence, with
the same integer $s$,
\begin{equation}\label{eq:palindrome}
c\ \text{is a palindrome}
\quad\Longleftrightarrow\quad
\tau\rho^{2s}(c)=c\ \text{for some } s\ge0 .
\end{equation}

\begin{lemma}\label{lem:dihedral}
Let $c=(a_1,b_1,a_2,b_2,\dots,a_n,b_n)$ be a word of length $2n$, and set
\[
\beta_c=\sigma_{1}^{a_1}\sigma_{2}^{-b_1}\sigma_{1}^{a_2}\sigma_{2}^{-b_2}\cdots
\sigma_{1}^{a_n}\sigma_{2}^{-b_n}.
\]
Let $\varphi$ be the surjective homomorphism to the four-element operation
group determined by
\[
\varphi\colon D_{4n}\longrightarrow G=\{1,m,r,mr\},
\qquad \varphi(\rho)=m,\qquad \varphi(\tau)=mr,
\]
where $\rho$ denotes the rotation and $\tau$ the reflection defined
above. Then the following hold for every word $c$.
\begin{enumerate}[label=\textup{(\alph*)}]
\item $\widehat{\beta_{\rho(c)}}=m(\widehat{\beta_c})$.
\item $\widehat{\beta_{\tau(c)}}=mr(\widehat{\beta_c})$.
\item $\widehat{\beta_{g(c)}}=\varphi(g)(\widehat{\beta_c})$ for every $g\in D_{4n}$.
\item $\beta_{\rho^{2}(c)}$ is conjugate to $\beta_c$ in $B_3$; hence
    $\beta_{\rho^{2j}(c)}\sim\beta_c$ for every $j\in\mathbb{Z}$. In particular the
    closures agree as oriented knots,
    \[
    \widehat{\beta_c}=\widehat{\beta_{\rho^{2}(c)}}=\widehat{\beta_{\rho^{4}(c)}}
    =\cdots ,
    \]
    that is, rotations by even powers of $\rho$ produce knots that agree even in
    orientation.
\end{enumerate}
\end{lemma}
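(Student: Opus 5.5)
The plan is to verify (a) and (b) by explicit manipulations in $B_3$, using only two facts about closures: conjugation in $B_3$ (Markov move~I, \Cref{thm:markov}) does not change the oriented closure, and the word operations $m$ and $r$ realize the mirror and reverse of the closure, as recorded after \Cref{def:amphi}. Part (c) will then follow formally from (a) and (b), and part (d) is a one-line conjugation. The key tool is the half twist $\Delta=\sigma_1\sigma_2\sigma_1$. As noted after \Cref{def:murasugi_nf}, conjugation by $\Delta$ interchanges $\sigma_1$ and $\sigma_2$, so it swaps generator indices while keeping exponents and order.

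For (a), I start from the mirror word
\[
m(\beta_c)=\sigma_1^{-a_1}\sigma_2^{b_1}\cdots\sigma_1^{-a_n}\sigma_2^{b_n}.
\]
Conjugating by $\Delta$ turns it into
\[
\sigma_2^{-a_1}\sigma_1^{b_1}\sigma_2^{-a_2}\cdots\sigma_2^{-a_n}\sigma_1^{b_n}.
\]
A further conjugation by $\sigma_2^{-a_1}$ moves the leading syllable to the end, giving
\[
\sigma_1^{b_1}\sigma_2^{-a_2}\cdots\sigma_1^{b_n}\sigma_2^{-a_1}=\beta_{\rho(c)}.
\]
Hence $\widehat{\beta_{\rho(c)}}=\widehat{m(\beta_c)}=m(\widehat{\beta_c})$.

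For (b), I first form $r(m(\beta_c))=\sigma_2^{b_n}\sigma_1^{-a_n}\cdots\sigma_2^{b_1}\sigma_1^{-a_1}$, whose closure is $mr(\widehat{\beta_c})$. Conjugating by $\Delta$ gives $\sigma_1^{b_n}\sigma_2^{-a_n}\cdots\sigma_1^{b_1}\sigma_2^{-a_1}$. This is exactly $\beta_{\tau(c)}$, since $\tau(c)=(b_n,a_n,\dots,b_1,a_1)$.

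For (c), I first check that $\varphi$ is a well-defined homomorphism. The dihedral relations $\rho^{2n}=\tau^2=1$ and $\tau\rho\tau^{-1}=\rho^{-1}$ hold in $G$ because $G$ is elementary abelian of exponent~$2$, so $m^{2n}=(mr)^2=1$ and $m=m^{-1}$. Surjectivity holds since $m$ and $mr$ generate $G$. I then argue by induction on the word length of $g$ in $\rho^{\pm1},\tau$. If $\widehat{\beta_{g(c)}}=\varphi(g)(\widehat{\beta_c})$ for all $c$, then applying (a) or (b) to the word $g(c)$ gives $\widehat{\beta_{\rho g(c)}}=m\,\varphi(g)(\widehat{\beta_c})=\varphi(\rho g)(\widehat{\beta_c})$, and similarly for $\tau$. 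The case $\rho^{-1}=\rho^{2n-1}$ reduces to positive powers. Because $G$ is abelian, the order in which the operations are composed is irrelevant.

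For (d), observe that $\beta_{\rho^2(c)}=\sigma_1^{a_2}\sigma_2^{-b_2}\cdots\sigma_1^{a_n}\sigma_2^{-b_n}\sigma_1^{a_1}\sigma_2^{-b_1}$. This equals $\alpha^{-1}\beta_c\alpha$ with $\alpha=\sigma_1^{a_1}\sigma_2^{-b_1}$. Iterating gives the statement for all $\rho^{2j}$, and Markov move~I shows the oriented closures agree. Consistently, $\varphi(\rho^2)=m^2=1$.

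I expect the main obstacle to be orientation bookkeeping, not algebra. One must make sure that $r$ on words really yields the reversed oriented closure, and that conjugation by $\Delta$ and cyclic rotation preserve orientation. Both points are supplied by the discussion after \Cref{def:amphi} and by \Cref{thm:markov}. I will state them explicitly so that (a) and (b) are identities of oriented knot types rather than merely of unoriented ones.
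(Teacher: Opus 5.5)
Your proposal is correct and follows the paper's proof essentially step for step. For (a) you conjugate by $\Delta$ and then cycle the leading syllable to the rear; for (b) conjugation by $\Delta$ alone suffices; for (c) you check the dihedral relations in $G$ and apply (a) and (b) repeatedly, which your explicit induction spells out; and for (d) you conjugate by the leading syllable pair $\sigma_1^{a_1}\sigma_2^{-b_1}$.
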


\begin{proof}
We use three facts. First, by \Cref{thm:markov}, conjugation and Markov moves preserve
the closure. Second, for $\Delta=\sigma_{1}\sigma_{2}\sigma_{1}$ we have
$\Delta\sigma_{1}\Delta^{-1}=\sigma_{2}$ and $\Delta\sigma_{2}\Delta^{-1}=\sigma_{1}$, so
conjugation by $\Delta$ interchanges $\sigma_{1}$ and $\sigma_{2}$ throughout a word.
Third, transferring an initial segment of a word to its rear is a conjugation: if
$\alpha=g\,u$ is any factorization in $B_{3}$, then
\begin{equation}\label{eq:cyclic-conj}
g^{-1}\alpha\,g=g^{-1}(g\,u)\,g=u\,g ,
\end{equation}
so the word $u\,g$, obtained from $g\,u$ by moving the segment $g$ from the front to the
back, is conjugate to $\alpha$ and has the same closure.

\smallskip
\noindent\emph{\textup{(a)}} Since $\widehat{m(\beta_c)}=m(\widehat{\beta_c})$ and the
mirror flips every exponent sign,
\[
m(\beta_c)=\sigma_{1}^{-a_1}\sigma_{2}^{b_1}\sigma_{1}^{-a_2}\sigma_{2}^{b_2}\cdots
\sigma_{1}^{-a_n}\sigma_{2}^{b_n}.
\]
Conjugating by $\Delta$ interchanges $\sigma_{1}$ and $\sigma_{2}$:
\[
\Delta m(\beta_c)\Delta^{-1}
=\sigma_{2}^{-a_1}\sigma_{1}^{b_1}\sigma_{2}^{-a_2}\sigma_{1}^{b_2}\cdots
\sigma_{2}^{-a_n}\sigma_{1}^{b_n}.
\]
This word begins with the syllable $\sigma_{2}^{-a_1}$, so we apply
\eqref{eq:cyclic-conj} with $g=\sigma_{2}^{-a_1}$ and
$u=\sigma_{1}^{b_1}\sigma_{2}^{-a_2}\sigma_{1}^{b_2}\cdots\sigma_{2}^{-a_n}\sigma_{1}^{b_n}$;
conjugating by $g$ carries that leading syllable to the rear and yields
\[
g^{-1}\bigl(\Delta m(\beta_c)\Delta^{-1}\bigr)g
=u\,g
=\sigma_{1}^{b_1}\sigma_{2}^{-a_2}\sigma_{1}^{b_2}\sigma_{2}^{-a_3}\cdots
\sigma_{1}^{b_n}\sigma_{2}^{-a_1}
=\beta_{\rho(c)} ,
\]
the last equality because the syllable pairs of $\rho(c)$ are
$(b_1,a_2),(b_2,a_3),\dots,(b_n,a_1)$. All the braids displayed here are conjugate, so
they have a common closure, and therefore
$\widehat{\beta_{\rho(c)}}=\widehat{m(\beta_c)}=m(\widehat{\beta_c})$.

\smallskip
\noindent\emph{\textup{(b)}} The reverse reads $\beta_c$ backwards and the mirror flips
every exponent sign, so
\[
mr(\beta_c)
=\sigma_{2}^{b_n}\sigma_{1}^{-a_n}\sigma_{2}^{b_{n-1}}\sigma_{1}^{-a_{n-1}}\cdots
\sigma_{2}^{b_1}\sigma_{1}^{-a_1},
\]
and conjugating by $\Delta$ gives
\[
\Delta mr(\beta_c)\Delta^{-1}
=\sigma_{1}^{b_n}\sigma_{2}^{-a_n}\sigma_{1}^{b_{n-1}}\sigma_{2}^{-a_{n-1}}\cdots
\sigma_{1}^{b_1}\sigma_{2}^{-a_1}=\beta_{\tau(c)},
\]
this time with no further rearrangement needed, since the $i$-th syllable pair of
$\tau(c)$ is $(b_{n+1-i},a_{n+1-i})$. Hence
$\widehat{\beta_{\tau(c)}}=mr(\widehat{\beta_c})$.

\smallskip
\noindent\emph{\textup{(c)}} The assignment $\varphi(\rho)=m$, $\varphi(\tau)=mr$
respects the defining relations $\rho^{2n}=\tau^{2}=1$ and $\tau\rho\tau^{-1}=\rho^{-1}$
of $D_{4n}$ (in $G$ one has $m^{2}=(mr)^{2}=1$ and $(mr)m(mr)^{-1}=m$), so $\varphi$ is a
well-defined homomorphism, as asserted in the statement. Writing $g\in D_{4n}$ as a word
in $\rho$ and $\tau$ and applying (a) and (b) repeatedly gives
$\widehat{\beta_{g(c)}}=\varphi(g)(\widehat{\beta_c})$ for every $g\in D_{4n}$.

\smallskip
\noindent\emph{\textup{(d)}} Here the leading \emph{syllable pair} is transferred to the
rear: applying \eqref{eq:cyclic-conj} with $g=\sigma_{1}^{a_1}\sigma_{2}^{-b_1}$ and
$u=\sigma_{1}^{a_2}\sigma_{2}^{-b_2}\cdots\sigma_{1}^{a_n}\sigma_{2}^{-b_n}$, so that
$\beta_c=g\,u$, we obtain
\[
g^{-1}\beta_c\,g
=u\,g
=\sigma_{1}^{a_2}\sigma_{2}^{-b_2}\cdots\sigma_{1}^{a_n}\sigma_{2}^{-b_n}\,
 \sigma_{1}^{a_1}\sigma_{2}^{-b_1}
=\beta_{\rho^{2}(c)},
\]
since $\rho^{2}$ shifts the syllable pairs of $c$ cyclically by one. Hence
$\beta_{\rho^{2}(c)}\sim\beta_c$ in $B_3$, and iterating gives
$\beta_{\rho^{2j}(c)}\sim\beta_c$ for every $j\in\mathbb{Z}$. Conjugate braids have
isotopic closures as oriented links, so
$\widehat{\beta_{\rho^{2j}(c)}}=\widehat{\beta_c}$ for all $j$, which is the displayed
chain of equalities. This is consistent with~(c): one has $\varphi(\rho^{2})=m^{2}=1$, so
$\rho^{2}$ acts trivially on the closure.
\end{proof}

\begin{theorem}[Conjugacy of alternating $3$-braids]\label{thm:conjugacy_3braids}
Let $\beta_c$ and $\beta_{c'}$ be $3$-braids in the alternating standard form of
\Cref{def:standard_form}, with words
$c=(a_1,b_1,\dots,a_n,b_n)$ and $c'=(a_1',b_1',\dots,a_{n'}',b_{n'}')$
of lengths $2n$ and $2n'$. Then $\beta_c$ and $\beta_{c'}$ are conjugate in $B_3$ if and
only if $n=n'$, $c'=\rho^{2j}(c)$ for some integer $j$.
\end{theorem}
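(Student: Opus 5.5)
The plan is to derive both directions from \Cref{thm:murasugi} and \Cref{lem:dihedral}(d), with essentially no new machinery.

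For the easy direction, suppose $n=n'$ and $c'=\rho^{2j}(c)$. Then \Cref{lem:dihedral}(d) gives $\beta_{c'}=\beta_{\rho^{2j}(c)}\sim\beta_c$ directly. Each application of $\rho^{2}$ moves the leading syllable pair $\sigma_1^{a_1}\sigma_2^{-b_1}$ to the rear, and this is a conjugation by \eqref{eq:cyclic-conj}.

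For the converse, assume $\beta_c$ and $\beta_{c'}$ are conjugate in $B_3$. Both braids are already in Murasugi normal form, with full-twist power $d=0$ and $w$ of Type~(i), as noted in \Cref{def:standard_form}. The uniqueness part of \Cref{thm:murasugi} says that the Type~(i) representative of a conjugacy class is unique up to cyclic permutation of the syllable pairs $(a_i,b_i)$. The sequences of syllable pairs of $c$ and $c'$ therefore agree up to a cyclic shift. In particular they have the same number of pairs, so $n=n'$. A cyclic shift of syllable pairs by $j$ places is exactly $\rho^{2j}$ on the word, because $\rho^{2}$ sends $(a_1,b_1,\dots,a_n,b_n)$ to $(a_2,b_2,\dots,a_n,b_n,a_1,b_1)$. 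Hence $c'=\rho^{2j}(c)$.

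The main obstacle is reading the uniqueness clause of \Cref{thm:murasugi} correctly. Three points need checking.
\begin{itemize}
\item Two Type~(i) representatives of the same class must have the same $d$. This holds because $d$ is an invariant of the class.
\item The permitted ambiguity must be only a cyclic shift of whole pairs $(a_i,b_i)$, never a shift by a single entry. A single-entry shift would swap the roles of $\sigma_1$ and $\sigma_2$, which is the mirror operation $\rho$ of \Cref{lem:dihedral}(a), not a conjugation.
\item The normal form must not allow any other coincidence, such as a non-reduced word in which adjacent syllables could merge. This is ruled out since all $a_i,b_i\ge 1$ and the syllables strictly alternate between $\sigma_1$ and $\sigma_2^{-1}$.
\end{itemize}
If the cited uniqueness statement were felt to be too weak, I would supplement it with a consistency check: the abelianization $e(\beta_c)=\sum a_i-\sum b_i$ and the Garside left-normal form of $\beta_c$ are conjugacy invariants. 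In the Garside approach, the cycling and decycling operations act on $\beta_c$ precisely by shifting syllable pairs, so its super summit set is the set of cyclic pair-rotations, which would recover the same conclusion independently.
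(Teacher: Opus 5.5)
Your proof is correct and is essentially the paper's own argument: the ``if'' direction comes from \Cref{lem:dihedral}(d), and the ``only if'' direction from the uniqueness clause of \Cref{thm:murasugi} (both braids have full-twist power $0$ and a Type~(i) factor, unique up to a cyclic shift of syllable pairs), with that shift identified as $\rho^{2j}$. One caution about your optional Garside aside, which the main proof does not need: cycling conjugates by a single simple factor rather than a whole syllable pair, so the super summit set generally contains letter-level cyclic conjugates and is strictly larger than the set of pair-rotations (e.g.\ for $c=(2,1)$ both $\sigma_1^2\sigma_2^{-1}$ and $\sigma_1\sigma_2^{-1}\sigma_1$ have $\inf=-1$ and $\sup=2$).
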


\begin{proof}
If $n=n'$ and $c'=\rho^{2j}(c)$, then $\beta_c\sim\beta_{c'}$ in $B_3$ by
\Cref{lem:dihedral}(d).

Conversely, suppose $\beta_c\sim\beta_{c'}$. Both are Murasugi normal forms of
full-twist power $0$ with factor of Type~(i) (\Cref{def:standard_form}), so
they represent the same conjugacy class. By
\Cref{thm:murasugi}, a conjugacy class of $B_3$ determines its
Murasugi normal form uniquely, the only freedom for a Type-(i) factor being the
cyclic order of the syllable pairs; by \Cref{lem:dihedral}(d) this freedom is
exactly the pair-rotation $\rho^{2}$. Hence $c$ and $c'$ lie in a common
$\rho^{2}$-orbit, i.e.\ $n=n'$ and $c'=\rho^{2j}(c)$ for some $j$.
\end{proof}

\noindent The passage from conjugacy classes in $B_3$ to oriented link types, for which
a single link type may be carried by two distinct conjugacy classes, is the
contribution of Birman and Menasco~\cite{BM1993}, recalled in
\Cref{thm:BM} below. Before classifying the amphichiral knots we recall the Birman--Menasco
apparatus for closed $3$-braids: \emph{flypes}, the conjugacy-class classification, and the
invertibility criterion. These are stated here so that the results below, particularly
\Cref{thm:amphi-implies-alt} and the flype case of \Cref{thm:C}, may refer
to them without forward references.

\begin{definition}[Flype {\cite{BM2008}}]\label{def:flype}
\leavevmode
\begin{enumerate}[label=\textup{(\alph*)}]
\item Let $D$ be a link diagram containing a $2$-string tangle $T$ (the part of
$D$ lying in a disk that meets the rest of the diagram in exactly four boundary
points) together with a single crossing $c$ adjacent to two of those four points.
A \emph{flype} is the move that rotates the tangle $T$ through $180^{\circ}$ about
an axis lying in the plane of the diagram, thereby carrying the crossing $c$ from
one side of $T$ to the other. A flype takes a diagram to a diagram of the same
link. 

\item In the setting of closed $3$-braids, a flype is applied to a braid of the
form
\[
\beta=\sigma_{1}^{p}\sigma_{2}^{q}\sigma_{1}^{r}\sigma_{2}^{\varepsilon},
\qquad \varepsilon=\pm1,
\]
in which $\sigma_{2}^{\varepsilon}$ is the flyped crossing. The literal
Birman--Menasco flype partner is
\[
\beta^{\mathrm f}
=\sigma_{1}^{r}\sigma_{2}^{q}\sigma_{1}^{p}\sigma_{2}^{\varepsilon}.
\]
At the level of conjugacy classes we shall also use the cyclically conjugate
representative
\[
\widetilde\beta^{\mathrm f}
=\sigma_{1}^{p}\sigma_{2}^{\varepsilon}\sigma_{1}^{r}\sigma_{2}^{q}
\sim\beta^{\mathrm f}.
\]
Since a flype is a move between diagrams of the same link,
$\widehat{\beta}=\widehat{\beta^{\mathrm f}}=
\widehat{\widetilde\beta^{\mathrm f}}$, although the corresponding braids need
not be conjugate to $\beta$ in $B_3$.
A link $L$ is said to \emph{admit a flype} if its conjugacy class of $3$-braids
has a representative of the form
$\sigma_{1}^{p}\sigma_{2}^{q}\sigma_{1}^{r}\sigma_{2}^{\varepsilon}$.
\item A flype is \emph{positive} or \emph{negative} according as
$\varepsilon=+1$ or $\varepsilon=-1$.
\item A flype is \emph{non-degenerate} when $\beta$ and its literal flype
partner $\beta^{\mathrm f}$
lie in distinct conjugacy classes.
\end{enumerate}
\end{definition}

The Birman--Menasco classification of closed $3$-braids (\Cref{thm:BM}) is the
basis for the classification of amphichiral knots carried out below. We also
record their invertibility criterion (\Cref{thm:invert}), which is needed to
establish full amphichirality in the flype case.

\begin{theorem}[Birman--Menasco {\cite[The Classification Theorem (Version~1)]{BM1993}}; see also {\cite[Theorem~5]{KoLee1999}}]
\label{thm:BM}
A link $L$ which is represented by a closed $3$-braid admits a unique conjugacy
class of $3$-braid representatives, with the following exceptions.
\begin{enumerate}[label=\textup{(\roman*)}]
\item $L$ is the unknot, which has three conjugacy classes of $3$-braid
representatives, namely the classes of $\sigma_{1}\sigma_{2}$, $\sigma_{1}\sigma_{2}^{-1}$, and
$\sigma_{1}^{-1}\sigma_{2}^{-1}$.
\item $L$ is a type $(2,k)$-torus link with $|k|\ge 2$, which has two conjugacy
classes of $3$-braid representatives, namely the classes of $\sigma_{1}^{k}\sigma_{2}$ and
$\sigma_{1}^{k}\sigma_{2}^{-1}$.
\item $L$ is one of a special class of links of braid index $3$ which have exactly two conjugacy classes of $3$-braid representatives, namely the classes of
\[
\sigma_{1}^{p}\sigma_{2}^{q}\sigma_{1}^{r}\sigma_{2}^{\varepsilon}
\qquad\text{and}\qquad
\sigma_{1}^{p}\sigma_{2}^{\varepsilon}\sigma_{1}^{r}\sigma_{2}^{q},
\]
where $\varepsilon=\pm1$, $|q|\ge 2$, the integers $p,q+\varepsilon,r$ are
distinct, and neither $p$ nor $r$ lies in $\{0,\varepsilon,2\varepsilon\}$.
\end{enumerate}
\end{theorem}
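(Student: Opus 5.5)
This statement is imported from the literature rather than proved from scratch. The plan is therefore twofold. First, reduce it to the form in which Birman and Menasco prove it. Second, indicate the structure of their argument and check that the notation here ($\varepsilon$, $p,q,r$, and the cyclically conjugate representative $\widetilde\beta^{\mathrm f}$ of \Cref{def:flype}) agrees with theirs and with the restatement in \cite[Theorem~5]{KoLee1999}.

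\textbf{Reduction to a Markov-type theorem without stabilization.} Let $\beta,\beta'\in B_3$ have isotopic closures $L$. By \Cref{thm:markov} they are related by conjugations and (de)stabilizations, possibly through braids on many strands. The first key step is to remove the excursions to higher braid index.
\begin{enumerate}[label=\textup{(\arabic*)}]
\item Place $L$ in braid position relative to an axis $A$.
\item Take a spanning surface, or the annulus traced by an isotopy, and put it in general position with respect to the disk fibration of $S^3\setminus A$.
\item Study the induced singular foliation, in the manner of Bennequin and Birman--Menasco.
\item Use an Euler-characteristic count on the tiles to show that the foliation can be simplified by exchange moves and destabilizations alone, without ever increasing the number of strands beyond $3$.
\end{enumerate}

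\textbf{Identifying the surviving moves.} On $3$ strands, an exchange move is, up to conjugation, exactly the flype $\sigma_1^{p}\sigma_2^{q}\sigma_1^{r}\sigma_2^{\varepsilon}\mapsto\sigma_1^{p}\sigma_2^{\varepsilon}\sigma_1^{r}\sigma_2^{q}$. A destabilization from $3$ strands occurs only when the braid is conjugate to $w\sigma_2^{\pm1}$ with $w\in B_2$. That is precisely the situation of cases (i) and (ii): the unknot, with the three classes $\sigma_1^{\pm1}\sigma_2^{\pm1}$ surviving up to conjugacy, and the $(2,k)$-torus links $\sigma_1^k\sigma_2^{\pm1}$. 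In those cases one lists the conjugacy classes directly and separates them, for example via the exponent sum and the Murasugi invariants of \Cref{thm:murasugi}.

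\textbf{Deciding when a flype changes the class.} In the remaining case, two $3$-braid representatives of $L$ differ by a finite chain of flypes and conjugations. It then remains to decide when a flype genuinely changes the conjugacy class. I would do this by computing Murasugi normal forms (\Cref{thm:murasugi}) of both sides and comparing their syllable data. The degenerate parameters are exactly those where the two normal forms coincide up to cyclic permutation: $p$ or $r\in\{0,\varepsilon,2\varepsilon\}$, $|q|\le1$, or $p,q+\varepsilon,r$ not distinct. Excluding them gives the distinctness claimed in (iii). One must also argue that a second flype cannot produce a third class; this follows because the flype partner of $\widetilde\beta^{\mathrm f}$ returns to the class of $\beta$.

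\textbf{Main obstacle.} The hard part is the foliation step, namely showing that no stabilization is ever needed. This is the technical core of \cite{BM1993} and is not reproducible in a short sketch. I would simply invoke it, and verify only the bookkeeping for the parameter conditions in (iii).
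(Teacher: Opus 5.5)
The paper gives no proof of this statement. It is quoted from Birman--Menasco and Ko--Lee, so resting the core on \cite{BM1993} is exactly what the paper does. The problem is the sketch you build around the citation, which gets the mechanism behind case~(iii) wrong.

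\textbf{Exchange moves are not flypes.} On three strands an exchange move is a conjugation. Indeed $\sigma_2\sigma_1^{r}\sigma_2^{-1}=\sigma_1^{-1}\sigma_2^{r}\sigma_1$ and $\sigma_2^{-1}\sigma_1^{r}\sigma_2=\sigma_1\sigma_2^{r}\sigma_1^{-1}$. Hence both sides of $\sigma_1^{p}\sigma_2\sigma_1^{r}\sigma_2^{-1}\leftrightarrow\sigma_1^{p}\sigma_2^{-1}\sigma_1^{r}\sigma_2$ are conjugate to $\sigma_1^{p}\sigma_2^{r}$.

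Suppose, as your steps (3)--(4) assert, that any two $3$-braid representatives were joined using only conjugation, exchange moves and destabilization inside $B_3$. Then every non-destabilizable closed $3$-braid would have a unique conjugacy class, and case~(iii) could not exist.

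A non-degenerate flype changes the conjugacy class, so it cannot be a composite of moves that preserve it. Any correct ``Markov theorem without stabilization'' for closed $3$-braids must therefore list the flype as an additional move, alongside conjugation and, for the destabilizable links, (de)stabilization. That is the form in which Birman--Menasco should be cited, not an identification of exchange moves with flypes.

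\textbf{Bookkeeping claims.} Two of these also fail as written.

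First, in the destabilizable case $w=\sigma_1^{k}$ you must include $k=0$. The braids $\sigma_2$ and $\sigma_2^{-1}$ both close to the two-component unlink, yet $e(\sigma_2^{\pm1})=\pm1$ shows they are not conjugate. This link is covered by neither (i) nor (ii) as printed, so ``precisely cases (i) and (ii)'' is false. In fact exception~(ii) has to be read for all $k\neq\pm1$ rather than $|k|\ge2$. This is harmless for the paper, which applies the theorem only to knots of braid index~$3$.

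Second, ``exactly two classes'' does not follow from the flype being an involution on one representative. A single class can contain several flype-admissible readings. Already $\beta_{(2,1,1,2)}=\sigma_1^{2}\sigma_2^{-1}\sigma_1\sigma_2^{-2}$ is cyclically conjugate to the negative-flype form $\sigma_1\sigma_2^{-2}\sigma_1^{2}\sigma_2^{-1}$. After a further conjugation by $\Delta$ it is also conjugate to the positive-flype form $\sigma_1^{-2}\sigma_2^{2}\sigma_1^{-1}\sigma_2$. A priori the partners of different readings, including readings found in the partner class, could lie in a third class. Excluding that is part of the content of the Birman--Menasco theorem and must be cited, not derived from involutivity.

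Likewise, the equivalence between the parameter conditions in (iii) and non-conjugacy of $\beta$ with its partner is the Ko--Lee/Birman--Menasco non-degeneracy criterion. Comparing Murasugi normal forms is a legitimate way to check it, but it is a substantial case analysis, not a remark.
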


\noindent
As a first concrete illustration of the non-degenerate flype case,
\Cref{fig:braid63closed} shows the knot $6_3$, the closure of the
alternating standard form $\beta_c$ with the palindromic word
$c=(2,1,1,2)$. It is the smallest amphichiral knot of braid index~$3$
belonging to the non-degenerate flype family $(k,1,1,k)$, and we include
it here as the first example of that family.

\begin{figure}[ht!]
    \centering
    \includegraphics[width=0.7\linewidth]{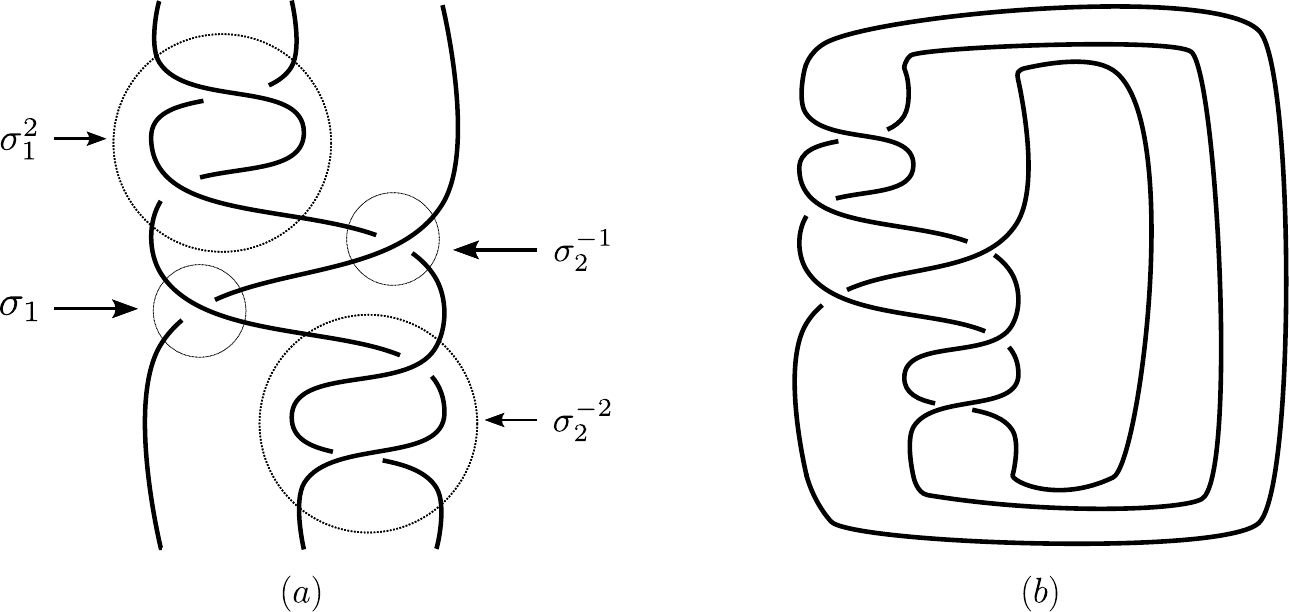}
  \caption{~\textup{(a)} represents the braid
  $\beta_c=\sigma_1^{2}\sigma_2^{-1}\sigma_1\sigma_2^{-2}$, and ~\textup{(b)} represents its closure $\widehat{\beta_c}=6a_3$. The knot is fully amphichiral; here
  $c=(2,1,1,2)$ is a palindrome in the flype family $(k,1,1,k)$ with $k=2$.}
\label{fig:braid63closed}
\end{figure}

\begin{theorem}[The invertibility theorem {\cite[Theorem~2]{BM2008}}]\label{thm:invert}
Let $L$ be a link of braid index $3$ with oriented $3$-braid representative
$\beta$. Then $L$ is non-invertible if and only if $\beta$ and $r(\beta)$ lie
in distinct conjugacy classes \emph{and} the class of $\beta$ contains no
representative admitting a non-degenerate flype.
\end{theorem}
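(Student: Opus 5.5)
The plan is to derive \Cref{thm:invert} from the Birman--Menasco classification (\Cref{thm:BM}), using the fact that a flype is an isotopy of oriented closed braids. Two preliminary observations come first. The braid $r(\beta)\in B_3$ closes to $rL$, and $b(rL)=b(L)=3$ since reversal does not change the number of strands. So $r(\beta)$ is a minimal-strand $3$-braid representative of $rL$. Moreover, $L$ has braid index $3$, so it is neither the unknot nor a $(2,k)$-torus link, both of which have braid index at most $2$. Hence exceptions (i) and (ii) of \Cref{thm:BM} cannot occur, and $L$ carries either one conjugacy class of $3$-braids or exactly the two classes of exception (iii).

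Next I would show that a flype representative forces invertibility. Suppose the class of $\beta$ contains some $\beta_0=\sigma_1^{p}\sigma_2^{q}\sigma_1^{r}\sigma_2^{\varepsilon}$; I would not even need the flype to be non-degenerate here. Reading the word backwards gives
\[
r(\beta_0)=\sigma_2^{\varepsilon}\sigma_1^{r}\sigma_2^{q}\sigma_1^{p}.
\]
Moving the leading $\sigma_2^{\varepsilon}$ to the rear via \eqref{eq:cyclic-conj} yields $\sigma_1^{r}\sigma_2^{q}\sigma_1^{p}\sigma_2^{\varepsilon}=\beta_0^{\mathrm f}$, the literal flype partner. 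By \Cref{def:flype}, the closure of $\beta_0^{\mathrm f}$ is $\widehat{\beta_0}=L$. Therefore $rL=\widehat{r(\beta_0)}=L$. The point I must verify carefully is that the flype isotopy respects the braid orientation. I would argue that the flype rotates a tangle of the closed braid by $180^\circ$ about the braid-axis direction, carrying downward strands to downward strands, so the isotopy is between oriented closed braids. This orientation check is the step I expect to be the main obstacle, because it is precisely where the literal partner $\beta^{\mathrm f}$ (rather than some reversed variant) has to appear. Separately, if $\beta\sim r(\beta)$, then trivially $rL=\widehat{r(\beta)}=\widehat\beta=L$.

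Both directions then follow. For ($\Rightarrow$), if $L$ is non-invertible, the previous paragraph shows both that $\beta\not\sim r(\beta)$ and that the class of $\beta$ contains no flype representative at all, in particular no non-degenerate one. For ($\Leftarrow$), I would argue by contradiction. Suppose $\beta\not\sim r(\beta)$, the class of $\beta$ has no non-degenerate flype representative, and yet $L=rL$. Then $r(\beta)$ is a second $3$-braid representative of $L$ lying in a different conjugacy class, so $L$ falls under exception (iii). Each of the two classes there contains a flype-form word: the first is $\sigma_1^{p}\sigma_2^{q}\sigma_1^{r}\sigma_2^{\varepsilon}$ itself, and the second, $\sigma_1^{p}\sigma_2^{\varepsilon}\sigma_1^{r}\sigma_2^{q}$, is cyclically conjugate to the flype partner. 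The two classes are distinct, so the flype exchanging them is non-degenerate in the sense of \Cref{def:flype}(d). Since the class of $\beta$ is one of these two, it contains a non-degenerate flype representative, which is a contradiction. I would also check that the roles of $p$ and $r$ in the second class match the literal partner up to cyclic conjugation, which is again a short application of \eqref{eq:cyclic-conj}.
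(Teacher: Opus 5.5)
The paper gives no proof of this statement. It is quoted from Birman--Menasco \cite[Theorem~2]{BM2008} and used as a black box, and only in the direction ``non-degenerate flype $\Rightarrow$ invertible'' (in \Cref{lem:flype-family} and \Cref{thm:flype-amphi}).

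Your argument is a correct derivation of the theorem from ingredients the paper does state, so it takes a genuinely different and more self-contained route. Its engine is the purely algebraic fact that $r(\beta_0)$ is a cyclic permutation of the literal partner $\beta_0^{\mathrm f}$. The paper records this as \Cref{lem:flype-twist}(b), though it uses it only to compare full-twist powers. Combined with $\widehat{\beta_0}=\widehat{\beta_0^{\mathrm f}}$ from \Cref{def:flype}(b), it gives your ($\Rightarrow$) direction. It even gives the slightly stronger form in which any flype-form representative, degenerate or not, forces $L=rL$.

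Your ($\Leftarrow$) direction is also sound:
\begin{itemize}
\item $r(\beta)$ exhibits a second conjugacy class representing $L=rL$, so \Cref{thm:BM} places $L$ in exception~(iii).
\item The word $\sigma_1^{p}\sigma_2^{q}\sigma_1^{r}\sigma_2^{\varepsilon}$ lies in the first listed class, and its cyclic conjugate $\sigma_1^{r}\sigma_2^{q}\sigma_1^{p}\sigma_2^{\varepsilon}$ lies in the second.
\item Each of these two words has its literal partner in the other class, so each is non-degenerate by \Cref{def:flype}(d).
\end{itemize}
You never need the numerical conditions of (iii).

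What your route buys is that \Cref{thm:invert} becomes a formal corollary of \Cref{thm:BM} together with \Cref{def:flype}(b). What it does not buy is independence from Birman--Menasco, because the oriented reading of \Cref{def:flype}(b) is itself their input.

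On the step you flagged, cite that input rather than argue it by the rotation picture, which does not work as stated. A sub-box of a $3$-braid meeting the diagram in four points is just a twist region $\sigma_i^{k}$, and rotating it does nothing. So the tangle rotated in a non-trivial flype must contain part of the closure. Whether the result is again a closed braid carrying the braid orientation therefore cannot be read off from ``downward strands stay downward'' inside the box.

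This orientation statement is in fact the entire content of the flype here. The word $r(\beta^{\mathrm f})=\sigma_2^{\varepsilon}\sigma_1^{p}\sigma_2^{q}\sigma_1^{r}$ is a cyclic permutation of $\beta=\sigma_1^{p}\sigma_2^{q}\sigma_1^{r}\sigma_2^{\varepsilon}$. Hence $\widehat{\beta^{\mathrm f}}=r(\widehat{\beta})$ holds trivially. The flype says something new only because Birman--Menasco's flype is an isotopy of \emph{oriented} closed braids, and that is also the sense in which the oriented classification \Cref{thm:BM} must be read. With \Cref{def:flype}(b) taken as an equality of oriented link types, as the paper's conventions for closures intend, your proof is complete.
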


\noindent The following lemma is used in the proof of \Cref{thm:amphi-implies-alt}
below, where it supplies the case of a non-degenerate flype (Case~B of that proof):
such a flype leaves the full-twist power unchanged, because the flype partner is, up
to conjugacy, the reverse braid.
\begin{lemma}[Flypes and the full-twist power]\label{lem:flype-twist}
\leavevmode
\begin{enumerate}[label=\textup{(\alph*)}]
\item For every $\beta\in B_3$, the reverse $r(\beta)$ has the same full-twist power in
Murasugi normal form as $\beta$.
\item Let $\beta=\sigma_1^{p}\sigma_2^{q}\sigma_1^{r}\sigma_2^{\varepsilon}$ admit a
non-degenerate flype, and let
$\widetilde\beta^{\mathrm f}=
\sigma_1^{p}\sigma_2^{\varepsilon}\sigma_1^{r}\sigma_2^{q}$ be the conjugate
flype-partner representative from \Cref{def:flype}. Then
$\widetilde\beta^{\mathrm f}$ is conjugate to $r(\beta)$. Consequently,
$\beta$ and $\widetilde\beta^{\mathrm f}$ have the same full-twist power.
\end{enumerate}
\end{lemma}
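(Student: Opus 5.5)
For part (a), I will use the fact that the reverse map $r$ is an anti-automorphism of $B_3$. It reverses the order of letters, so $r(xy)=r(y)r(x)$; it is well defined because both braid relations are palindromic. It sends conjugates to conjugates, since $r(\alpha\beta\alpha^{-1})=r(\alpha)^{-1}r(\beta)r(\alpha)$. So if $\beta\sim h^{d}w$ is a Murasugi normal form, then $r(\beta)\sim r(w)\,r(h)^{d}$.

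Next I compute $r(h)$. Since $r(\Delta)=r(\sigma_1\sigma_2\sigma_1)=\Delta$, we get $r(h)=r(\Delta^2)=\Delta^2=h$. As $h$ is central, $r(\beta)\sim h^{d}\,r(w)$.

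It remains to check that $r(w)$ is conjugate to a twist-free word of the same type:
\begin{itemize}
\item Type (ii): $r(\sigma_2^k)=\sigma_2^k$.
\item Type (iii): $r(\sigma_1^m\sigma_2^{-1})=\sigma_2^{-1}\sigma_1^m$, which is cyclically conjugate to $\sigma_1^m\sigma_2^{-1}$.
\item Type (i): $r(w)=\sigma_2^{-b_\ell}\sigma_1^{a_\ell}\cdots\sigma_2^{-b_1}\sigma_1^{a_1}$. Moving the leading syllable to the rear via \eqref{eq:cyclic-conj} gives $\sigma_1^{a_\ell}\sigma_2^{-b_{\ell-1}}\cdots\sigma_1^{a_1}\sigma_2^{-b_\ell}$, which is again of Type (i) with positive exponents.
\end{itemize}
In every case $r(\beta)$ is conjugate to $h^{d}w'$ with $w'$ of the same type. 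By the uniqueness clause of \Cref{thm:murasugi}, the full-twist power of $r(\beta)$ is $d$.

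For part (b), I compute $r(\beta)=\sigma_2^{\varepsilon}\sigma_1^{r}\sigma_2^{q}\sigma_1^{p}$. Moving the final syllable $\sigma_1^{p}$ to the front is a conjugation by \eqref{eq:cyclic-conj}, and it yields $\sigma_1^{p}\sigma_2^{\varepsilon}\sigma_1^{r}\sigma_2^{q}=\widetilde\beta^{\mathrm f}$. So $\widetilde\beta^{\mathrm f}\sim r(\beta)$, and the statement about the full-twist power follows from (a), since that power is a conjugacy invariant.

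Note that non-degeneracy is not actually used in (b); the identity holds for any such $\beta$. The only delicate step is (a), specifically the care needed to confirm that the reversed factor $r(w)$ lands back in a normal-form type. Type (i) needs the cyclic shift to restore the pattern starting with $\sigma_1$, so that the Murasugi uniqueness theorem can be invoked.
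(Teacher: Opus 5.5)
Your proof is correct and follows the paper's argument essentially step for step. In (a) the paper also uses that $r$ is an anti-homomorphism with $r(h)=h$, checks each Murasugi type by a cyclic conjugation (for Type~(i) obtaining $\beta_{\rho\tau(c)}$), and concludes by the uniqueness in \Cref{thm:murasugi}; (b) is the same trailing-syllable shift of $r(\beta)$, and your extra remarks (that $r$ carries conjugates to conjugates, so one may start from $\beta\sim h^{d}w$ rather than $\beta=h^{d}w$, and that non-degeneracy is never used in (b)) are accurate.
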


\begin{proof}
\noindent\emph{\textup{(a)}} Reverse is the \emph{anti-homomorphism} of $B_3$ that reads a
word backwards; that is, $r(\alpha\beta)=r(\beta)\,r(\alpha)$ for all
$\alpha,\beta\in B_3$. Since $r(\Delta)=\Delta$, the full twist satisfies
$r(h)=r(\Delta^{2})=\Delta^{2}=h$; as $h$ is central,
\[
r(h^{d}w)=r(w)\,r(h^{d})=r(w)\,h^{d}=h^{d}\,r(w).
\]
Thus, writing $\beta$ in Murasugi normal form $h^{d}w$, we obtain
$r(\beta)=h^{d}\,r(w)$. For each type we show that $r(w)$ is conjugate to a twist-free Murasugi form
of the same type.

\smallskip
\noindent\emph{Type \textup{(i)}.} Here $w=\sigma_1^{a_1}\sigma_2^{-b_1}\cdots
\sigma_1^{a_\ell}\sigma_2^{-b_\ell}=\beta_c$, with word
$c=(a_1,b_1,\dots,a_\ell,b_\ell)$. By the definition of the reverse, $r$ reads the
word backwards, so
\[
r(w)=\sigma_2^{-b_\ell}\sigma_1^{a_\ell}\sigma_2^{-b_{\ell-1}}\sigma_1^{a_{\ell-1}}
\cdots\sigma_2^{-b_1}\sigma_1^{a_1}.
\]
By \eqref{eq:cyclic-conj}, conjugating $r(w)$ by $g=\sigma_2^{-b_\ell}$ moves the leading
syllable to the rear,
\[
g^{-1}\,r(w)\,g
=\sigma_2^{\,b_\ell}\,r(w)\,\sigma_2^{-b_\ell}
=\sigma_1^{a_\ell}\sigma_2^{-b_{\ell-1}}\sigma_1^{a_{\ell-1}}\cdots
\sigma_1^{a_1}\sigma_2^{-b_\ell}
=\beta_{\rho\tau(c)},
\]
a Type-(i) standard form. Hence $r(w)\sim\beta_{\rho\tau(c)}$; in particular $r(w)$
is conjugate to a twist-free Type-(i) form.

\smallskip
\noindent\emph{Type \textup{(ii)}.} Here $r(\sigma_2^{k})=\sigma_2^{k}$, already of
Type~(ii).

\smallskip
\noindent\emph{Type \textup{(iii)}.} Here $r(\sigma_1^{m}\sigma_2^{-1})
=\sigma_2^{-1}\sigma_1^{m}$. Moving the leading syllable $\sigma_2^{-1}$ to the rear by
conjugating with $\sigma_2^{-1}$ as in \eqref{eq:cyclic-conj} gives
\[
\sigma_2\,(\sigma_2^{-1}\sigma_1^{m})\,\sigma_2^{-1}=\sigma_1^{m}\sigma_2^{-1},
\]
so $r(\sigma_1^{m}\sigma_2^{-1})\sim\sigma_1^{m}\sigma_2^{-1}$, again of Type~(iii).

\smallskip
\noindent In every case $r(\beta)$ is conjugate to a Murasugi normal form of power $d$. By
the uniqueness in \Cref{thm:murasugi}, this is its normal form, so its full-twist power
is $d$.

\smallskip
\noindent\emph{\textup{(b)}} Moving the trailing syllable of $r(\beta)$ to the front,
again a conjugation as in \eqref{eq:cyclic-conj},
\[
r(\beta)=\sigma_2^{\varepsilon}\sigma_1^{r}\sigma_2^{q}\sigma_1^{p}
\ \sim\ \sigma_1^{p}\sigma_2^{\varepsilon}\sigma_1^{r}\sigma_2^{q}
=\widetilde\beta^{\mathrm f}.
\]
The full-twist power is a conjugacy invariant and is preserved by reverse by part~(a);
hence $\beta$ and $\widetilde\beta^{\mathrm f}$ share it.
\end{proof}

\noindent We now show that no generality is lost by restricting to alternating standard
forms: amphichirality \emph{alone} forces a knot of braid index~$3$ to be alternating.

\begin{theorem}[Amphichirality and alternating standard form]
\label{thm:amphi-implies-alt}
Let $K$ be a knot of braid index exactly~$3$. If $K$ is amphichiral,
then $K$ admits a minimal $3$-braid representative in alternating
standard form. In particular, $K$ is alternating.
\end{theorem}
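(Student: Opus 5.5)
The argument has two steps. First, amphichirality forces the full-twist power $d$ of a minimal representative to vanish. Second, the braid-index hypothesis rules out the degenerate types (ii) and (iii), leaving a Type-(i) twist-free form, i.e.\ an alternating standard form.

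\textbf{Setup and the mirror.} Choose a minimal representative $\beta\in B_3$ of $K$ and, by \Cref{thm:murasugi}, conjugate it to Murasugi normal form $h^{d}w$. Compute the mirror: $m(h)=h^{-1}$, and $m(w)$ is conjugate to a twist-free form of the appropriate type. For Type (i) this is exactly the computation in \Cref{lem:dihedral}(a), giving $m(w)\sim\beta_{\rho(c)}$. Hence $m(\beta)\sim h^{-d}w'$ with $w'$ of Type (i). Types (ii) and (iii) must be handled separately; for instance $m(\sigma_1^{m}\sigma_2^{-1})$ may require absorbing a full twist, so the powers need checking.

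\textbf{Forcing $d=0$.} Suppose $K$ is amphichiral, so $K=mK$ or $K=mrK$. By \Cref{lem:flype-twist}(a), reversal preserves $d$, so in either case $K$ has a $3$-braid representative of full-twist power $-d$. Now split according to \Cref{thm:BM}.

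Case A: $K$ has a unique conjugacy class. Then $h^{d}w\sim h^{-d}w'$ up to the type-dependent shift, and uniqueness in \Cref{thm:murasugi} gives $d=-d$, so $d=0$.

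Case B: $K$ lies in the exceptional flype family (iii). The two classes have the same full-twist power by \Cref{lem:flype-twist}(b), so again $d=-d$.

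Exceptions (i) and (ii) are excluded because the unknot and the $(2,k)$-torus knots have braid index $<3$.

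\textbf{Excluding types (ii) and (iii).} With $d=0$, Type (ii) gives $\sigma_2^{k}$, which is not a knot, and Type (iii) gives $\sigma_1^{m}\sigma_2^{-1}$ with $m\in\{-1,-2,-3\}$. These closures destabilize, so their braid index is less than $3$, a contradiction. Hence $w=\beta_c$ is of Type (i), and $K=\widehat{\beta_c}$ is alternating by \Cref{lem:alt-standard}.

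\textbf{Main obstacle.} I expect the delicate step to be the mirror computation for the non-Type-(i) factors. When $d\neq 0$, the mirror of such a factor may shift the twist power: for example, $m(\sigma_1^{m}\sigma_2^{-1})=\sigma_1^{-m}\sigma_2$ must be renormalized, possibly producing an extra $h^{\pm1}$. The parity bookkeeping then changes from $d=-d$ to an equation like $d=-d\pm1$. I would resolve this by comparing exponent sums instead. By \Cref{eq:wmin-zero}, $e(\beta)=0$, while $e(h^{d}w)=6d+e(w)$. For Type (ii) and Type (iii) the value $e(w)$ is fixed and small, and checking these finitely many patterns, together with the knot condition from $\pi$, should rule them out or force $d=0$.
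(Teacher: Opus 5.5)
Your proposal is essentially the paper's proof. As written, however, the main line is circular, and you should reorder it.

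\textbf{The circularity.} Your ``Forcing $d=0$'' step uses a mirror computation that you carry out only for Type~(i) factors. Your ``Excluding types (ii) and (iii)'' step then assumes $d=0$, and the destabilization argument for $\sigma_1^{m}\sigma_2^{-1}$ is valid only when no full twist is present. So neither step is justified for Types~(ii) and~(iii) as they stand.

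\textbf{The repair.} The fix you sketch under ``Main obstacle'' is exactly the paper's Step~1, and it closes the gap completely. It should come first and does not depend on $d$:
\begin{itemize}
\item Since $\pi(h)=1$, one has $\pi(h^{d}\sigma_2^{k})=(2\,3)^{k}$ and $\pi(h^{d}\sigma_1^{-2}\sigma_2^{-1})=(2\,3)$. Both fix strand~$1$, so these closures are never knots, for any $d$.
\item For $m\in\{-1,-3\}$, the condition $e(\beta)=6d+m-1=0$ from \eqref{eq:wmin-zero} would require $6d\in\{2,4\}$, which is impossible.
\end{itemize}

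\textbf{What remains.} Once Types~(ii) and~(iii) are excluded outright, you never need to renormalize $m(\sigma_1^{m}\sigma_2^{-1})$, and the destabilization argument becomes unnecessary. The only mirror computation left is the Type-(i) one, namely the argument of \Cref{lem:dihedral}(a) with the central factor $h^{-d}$ carried along; this is the paper's Step~2. Your Cases~A and~B, using uniqueness of the Murasugi normal form and \Cref{lem:flype-twist}(b), are then exactly the paper's Step~3.
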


\begin{proof}
Represent $K$ as a closed $3$-braid and put a representative in Murasugi normal form,
\[
\beta=(\sigma_1\sigma_2)^{3d}\,w=h^{d}w,\qquad h=(\sigma_1\sigma_2)^{3},\ d\in\mathbb{Z},
\]
with $w$ of Type~(i),~(ii), or~(iii). Since $K$ has braid index $3$, $\beta$ is a
minimal-strand representative, so by \Cref{lem:writhe-invariant} its
exponent sum equals the link invariant $w_{\min}(K)$, and amphichirality gives
$e(\beta)=w_{\min}(K)=0$ by~\eqref{eq:wmin-zero}.

\smallskip
\noindent\emph{Step 1: $w$ is of Type~\textup{(i)}.}
Since $\pi(\sigma_1\sigma_2)=(1\,3\,2)$ has order $3$, we have $\pi(h)=1$ and thus
$\pi(\beta)=\pi(w)$. Now suppose, for the sake of contradiction, that $w$ is not of Type~(i). By
\Cref{thm:murasugi} it is then of Type~(ii) or of Type~(iii), and we treat these two
possibilities in turn. Suppose first that $w$ is of Type~(ii), say $w=\sigma_2^{k}$ for some
$k\in\mathbb{Z}$. Then $\pi(\beta)=\pi(w)=(2\,3)^{k}$, a permutation that fixes the
first strand and hence is never a $3$-cycle, whatever the value of $k$. Consequently
$\widehat{\beta}$ has at least two components, contradicting the hypothesis that
$\widehat{\beta}$ is the knot $K$.
 
Suppose instead that $w$ is of Type~(iii), say $w=\sigma_1^{m}\sigma_2^{-1}$ with
$m\in\{-1,-2,-3\}$. Then $\pi(\beta)=\pi(w)=(1\,2)^{m}(2\,3)$, which is a $3$-cycle
precisely when $m$ is odd. For the even value $m=-2$ the permutation again fixes the
first strand, so $\widehat{\beta}$ is a link of more than one component and the same
contradiction arises. There remain the two odd values $m\in\{-1,-3\}$, and these are
excluded not by a count of components but by the exponent sum. Indeed
$e(w)=e(\sigma_1^{m}\sigma_2^{-1})=m-1$, so that $e(\beta)=6d+m-1$, while
amphichirality forces $e(\beta)=0$ by~\eqref{eq:wmin-zero}. Hence $6d=1-m$, that is,
$6d=2$ when $m=-1$ and $6d=4$ when $m=-3$. Neither $2$ nor $4$ is divisible by $6$, so
no integer $d$ satisfies either equation, and both values of $m$ are impossible.
 
In every case we have reached a contradiction, so $w$ is of Type~(i), say
\[
\beta=h^{d}\,\sigma_1^{a_1}\sigma_2^{-b_1}\cdots\sigma_1^{a_n}\sigma_2^{-b_n},
\qquad a_i,b_i\ge1 .
\]
 
\smallskip
\noindent\emph{Step 2: the mirror carries $d$ to $-d$.}
The mirror flips every exponent sign, and conjugation by
$\Delta=\sigma_1\sigma_2\sigma_1$ interchanges $\sigma_1\leftrightarrow\sigma_2$ while
fixing the central element $h$, as seen in the proof of \Cref{lem:dihedral}(a).
Hence, moving the leading syllable to the rear as in~\eqref{eq:cyclic-conj},
\[
\Delta\,m(\beta)\,\Delta^{-1}
=h^{-d}\,\sigma_2^{-a_1}\sigma_1^{b_1}\cdots\sigma_2^{-a_n}\sigma_1^{b_n}
\ \sim\ h^{-d}\,\sigma_1^{b_1}\sigma_2^{-a_2}\sigma_1^{b_2}\sigma_2^{-a_3}\cdots
\sigma_1^{b_n}\sigma_2^{-a_1},
\]
again a Murasugi normal form of Type~(i), now with full-twist power $-d$, whose closure is
$m(K)$. Since the reverse $r$ preserves the full-twist power by
\Cref{lem:flype-twist}(a), the knot $mr(K)$ likewise has a Type-(i) representative
of power $-d$.

\smallskip
\noindent\emph{Step 3: $d=0$.}
Recall the full-twist power is a conjugacy invariant by \Cref{thm:murasugi}. Amphichirality gives $m(K)=K$ or $mr(K)=K$, so by Step~2 the knot $K$ admits a Type-(i) representative of power $-d$ alongside $\beta$ of power $d$. As $K$ has braid index $3$, it is neither the unknot nor a $(2,k)$-torus knot. Hence by Birman--Menasco (\Cref{thm:BM}) $K$ is represented either by a single conjugacy class of $3$-braids or by two classes related by a non-degenerate flype.

If the class of $\beta$ is the unique conjugacy class of $3$-braids representing $K$,
then the two representatives are conjugate, and comparing their full-twist powers gives
$-d=d$. Otherwise, the class of $\beta$ admits a non-degenerate flype, so that $K$ is
represented by exactly two conjugacy classes; these two classes share the same full-twist
power by \Cref{lem:flype-twist}(b), and the power-$(-d)$ representative lies in one of
them, so again $-d=d$.

\smallskip
\noindent In either case $d=0$. Hence
$\beta=\sigma_1^{a_1}\sigma_2^{-b_1}\cdots\sigma_1^{a_n}\sigma_2^{-b_n}$ is an alternating
standard form (\Cref{def:standard_form}), whose closure is alternating by
\Cref{lem:alt-standard}. Hence $K$ is alternating.
\end{proof}

\noindent We first treat the case in which $K$ is represented by a unique
conjugacy class of $3$-braids. The mechanism is that, under this hypothesis, the
symmetry group $\mathrm{Sym}(K)$ is identified with the stabilizer of $c$ under
the dihedral action; the proof below makes this identification precise. This
settles one of the two halves of the classification: the theorem below is
precisely case~\textup{(A)} of \Cref{thm:C}, and supplies the corresponding half
of \Cref{thm:B}, the remaining flype case being deferred to
\Cref{lem:flype-family,thm:flype-amphi}.

\begin{theorem}[Classification: unique conjugacy class]\label{thm:unique-amphi}
Suppose that
\begin{enumerate}[label=\textup{(\alph*)}]
\item
$\beta_c=
\sigma_{1}^{a_1}\sigma_{2}^{-b_1}\cdots
\sigma_{1}^{a_n}\sigma_{2}^{-b_n}$
is in alternating standard form with all $a_i,b_i\ge1$, and its
closure $K=\widehat{\beta_c}$ is a prime knot of braid index~$3$;
\item $K$ is represented by a unique conjugacy class of $3$-braids; equivalently,
the conjugacy class of $\beta_c$ admits no non-degenerate flype.
\end{enumerate}
Then $K$ is amphichiral if and only if $c$ is a palindrome or has odd period.
Moreover, if $K$ is amphichiral, then:
\begin{enumerate}[label=\textup{(\roman*)}]
  \item $K$ is fully amphichiral $\iff$ $c$ is a palindrome and has odd period;
  \item $K$ is positive amphichiral but not negative amphichiral $\iff$ $c$ has odd period but is not a palindrome;
  \item $K$ is negative amphichiral but not positive amphichiral $\iff$ $c$ is a palindrome but does not have odd period.
\end{enumerate}
\end{theorem}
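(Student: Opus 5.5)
The plan is to identify $\mathrm{Sym}(K)$ with the image under $\varphi$ of the stabilizer of $c$ in $D_{4n}$, and then to read off that stabilizer combinatorially. The key claim is
\[
\mathrm{Sym}(K)=\varphi\bigl(\mathrm{Stab}_{D_{4n}}(c)\bigr)\cdot\{1\}\quad\text{modulo } \ker\varphi\text{-orbits},
\]
or more precisely: $g_0\in G$ lies in $\mathrm{Sym}(K)$ if and only if there is $g\in\varphi^{-1}(g_0)$ with $g(c)\in\{\rho^{2j}(c)\}$, that is, with $\rho^{2j}g(c)=c$ for some $j$. Since $\rho^{2j}\in\ker\varphi$ (indeed $\ker\varphi=\langle\rho^2\rangle$, because $\varphi$ is onto a group of order $4$ and $\langle\rho^2\rangle$ has index $4$), this simply says that $g_0\in\varphi(\mathrm{Stab}(c))$.

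The inclusion $\supseteq$ is immediate from \Cref{lem:dihedral}(c): if $g(c)=c$, then $\varphi(g)(K)=\widehat{\beta_{g(c)}}=K$. For the inclusion $\subseteq$, suppose $g_0(K)=K$ and pick $g\in\varphi^{-1}(g_0)$. Then $\widehat{\beta_{g(c)}}=g_0(K)=K$, so $\beta_{g(c)}$ is a $3$-braid representative of $K$. By hypothesis (b), $\beta_{g(c)}$ is conjugate to $\beta_c$, and \Cref{thm:conjugacy_3braids} gives $g(c)=\rho^{2j}(c)$. Hence $\rho^{-2j}g\in\mathrm{Stab}(c)$ and maps to $g_0$. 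The one point needing care is that hypothesis (b) is stated for $K$ as an oriented knot, and $\beta_{g(c)}$ does close to $K$ with the correct orientation, since $\varphi(g)$ was chosen so that $g_0(K)=K$ as oriented knots.

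It remains to compute $\varphi(\mathrm{Stab}(c))$. Here $\varphi(\rho^k)=m^k$ and $\varphi(\tau\rho^k)=mr\,m^k$. Thus $m\in\mathrm{Sym}(K)$ if and only if some odd power $\rho^k$ fixes $c$. The stabilizer of $c$ inside $\langle\rho\rangle$ is $\langle\rho^p\rangle$ with $p$ the period, and since $p\mid 2n$, an odd multiple of $p$ exists exactly when $p$ is odd. Next, $mr\in\mathrm{Sym}(K)$ if and only if $\tau\rho^{2s}(c)=c$ for some $s$, which by \eqref{eq:palindrome} says exactly that $c$ is a palindrome. Finally, $r=\varphi(\tau\rho^{\mathrm{odd}})$, and $\mathrm{Sym}(K)$ is a subgroup, so $r\in\mathrm{Sym}(K)$ whenever both $m$ and $mr$ are. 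Amphichirality means $m$ or $mr$ lies in $\mathrm{Sym}(K)$, which gives the criterion ``palindrome or odd period''.

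Items (i)--(iii) then follow by sorting the four possibilities. If both conditions hold, then $\mathrm{Sym}(K)=G$. If only one condition holds, the other generator is absent, so $K$ is exactly positive or exactly negative amphichiral. The main obstacle I expect is the $\subseteq$ direction, specifically justifying the use of uniqueness of the conjugacy class for the oriented representative $\beta_{g(c)}$. One also has to check that $\beta_{g(c)}$ has $3$ strands and is a minimal representative, so that \Cref{thm:BM} applies via hypothesis (b); this holds because $K$ has braid index $3$. Everything else reduces to the dihedral bookkeeping above.
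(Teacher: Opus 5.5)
Your proposal is correct and follows essentially the same route as the paper. The paper writes $\mathrm{Sym}(K)=\mathrm{Stab}_G([c])$ through the isomorphism $D_{4n}/\langle\rho^2\rangle\cong G$, which is the same statement as your $\mathrm{Sym}(K)=\varphi(\mathrm{Stab}_{D_{4n}}(c))$. Both rest on the uniqueness hypothesis together with \Cref{thm:conjugacy_3braids}, and both then read off $m$ and $mr$ and sort the subgroups in the same way. Your subgroup argument, that an odd power of $\rho$ fixes $c$ exactly when the least period is odd, spells out a step the paper passes over in one phrase.
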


\begin{example}[The three symmetry types]\label{ex:three-types}
Each of the cases \textup{(i)}--\textup{(iii)} of \Cref{thm:unique-amphi} is
realized by a knot of small crossing number, and the corresponding closed braids
are drawn in \Cref{fig:three-examples}.
\begin{enumerate}[label=\textup{(\roman*)}]
\item The figure-eight knot $4a_{1}=\widehat{\beta_c}$ with $c=(1,1,1,1)$ is a
palindrome of period $p=1$, so both $m$ and $mr$ lie in $\mathrm{Sym}(K)$ and $K$
is fully amphichiral. It is the smallest amphichiral knot of braid index~$3$.
\item The knot $14a_{18676}$, with $c=(1,2,4,1,2,4)$, has odd period $p=3$ but is
not a palindrome, so $m\in\mathrm{Sym}(K)$ while $mr\notin\mathrm{Sym}(K)$ and $K$
is positive amphichiral. Among all prime amphichiral knots of braid index~$3$ with
crossing number at most~$14$ the entry \texttt{14a\_18676} is the only knot that
is positive amphichiral but not fully amphichiral \textup{(}\Cref{tab:bi3-cn14}\textup{)}, hence the sole witness for this case in
that range.
\item The knot $8a_{17}$, with $c=(2,1,1,1,1,2)$, is a palindrome but has period
$6$, so $mr\in\mathrm{Sym}(K)$ while $m\notin\mathrm{Sym}(K)$ and $K$ is negative
amphichiral.
\end{enumerate}
\end{example}

\begin{figure}[p]
\centering
\begin{subfigure}{\linewidth}
  \centering
  \includegraphics[width=0.62\linewidth]{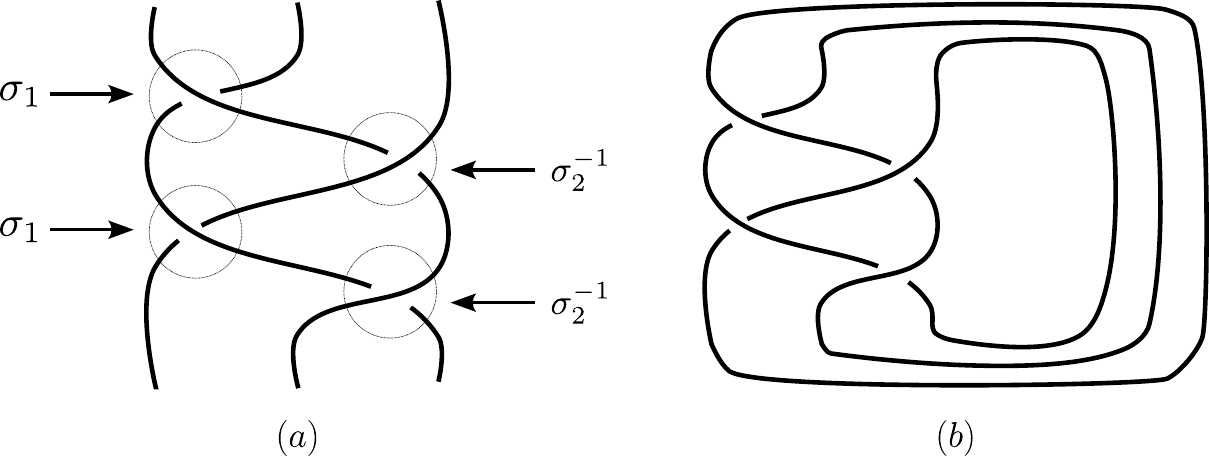}
  \caption{$\beta_c=\sigma_1\sigma_2^{-1}\sigma_1\sigma_2^{-1}$ and its closure
  $\widehat{\beta_c}=4a_1$, the figure-eight knot: $c=(1,1,1,1)$ is a palindrome
  of period $p=1$, so $K$ is fully amphichiral.}
  \label{fig:braid4a1closed}
\end{subfigure}

\vspace{1.2em}

\begin{subfigure}{\linewidth}
  \centering
  \includegraphics[width=0.62\linewidth]{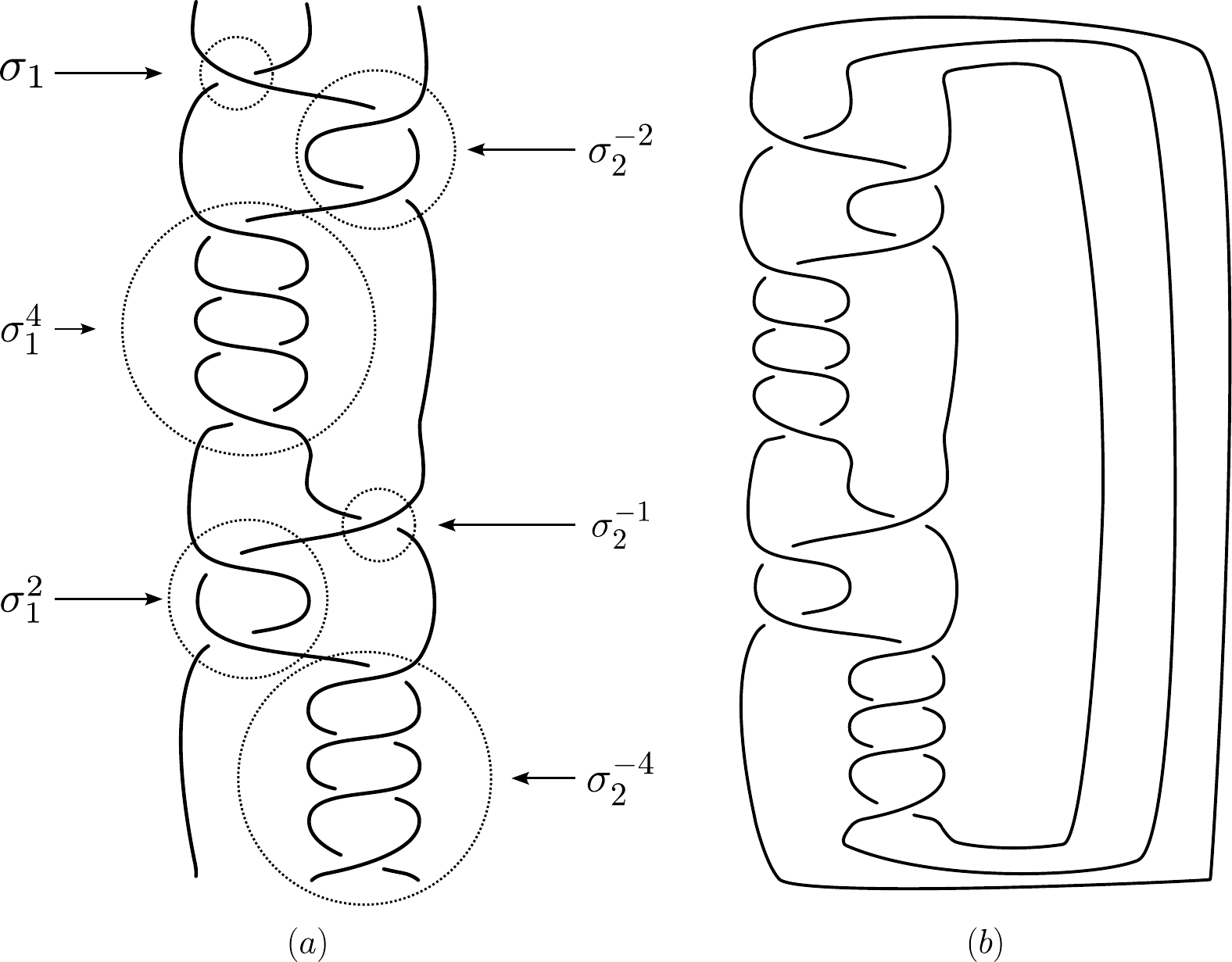}
  \caption{$\beta_c=\sigma_1\sigma_2^{-2}\sigma_1^{4}\sigma_2^{-1}\sigma_1^{2}
  \sigma_2^{-4}$ and its closure $\widehat{\beta_c}=14a_{18676}$: $c=(1,2,4,1,2,4)$
  has odd period $p=3$ but is not a palindrome, so $K$ is positive amphichiral but not negative amphichiral.}
  \label{fig:braid14a18676closed}
\end{subfigure}

\vspace{1.2em}

\begin{subfigure}{\linewidth}
  \centering
  \includegraphics[width=0.62\linewidth]{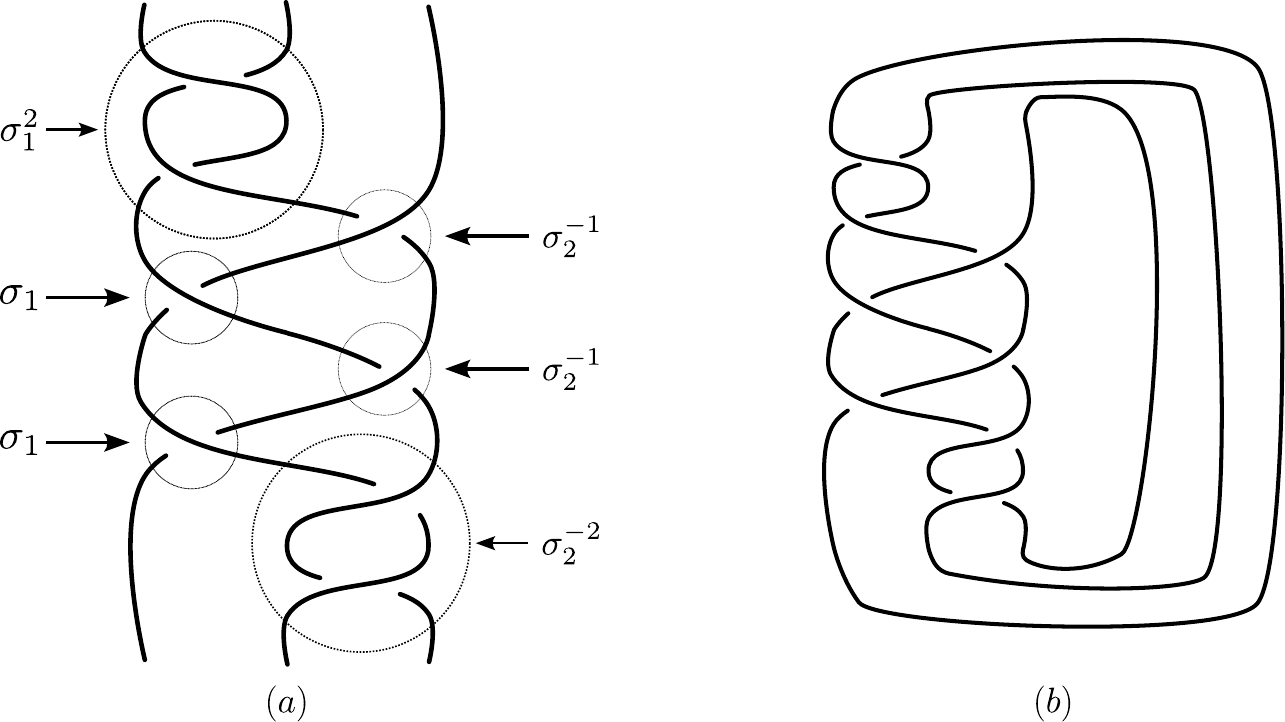}
  \caption{$\beta_c=\sigma_1^{2}\sigma_2^{-1}\sigma_1\sigma_2^{-1}\sigma_1
  \sigma_2^{-2}$ and its closure $\widehat{\beta_c}=8a_{17}$: $c=(2,1,1,1,1,2)$ is
  a palindrome of even period $6$, so $K$ is negative amphichiral but not positive amphichiral.}
  \label{fig:braid8a17closed}
\end{subfigure}
\caption{The three symmetry types of \Cref{thm:unique-amphi}
\textup{(}\Cref{ex:three-types}\textup{)}. In each panel the left-hand diagram is
the braid $\beta_c$ in alternating standard form and the right-hand diagram is its
closure.}
\label{fig:three-examples}
\end{figure}

\begin{proof}
\noindent The criterion is read off from a group-theoretic identification which we
establish first: the homomorphism $\varphi$ of \Cref{lem:dihedral} has kernel
$\langle\rho^{2}\rangle$ and hence induces an isomorphism
$D_{4n}/\langle\rho^{2}\rangle\cong G$, through which $G$ acts on the conjugacy classes
$[c]=\{\rho^{2j}(c):j\in\mathbb{Z}\}$ of standard forms by $\bar g\cdot[c]=[g(c)]$, and
under this action $\mathrm{Sym}(K)$ is the stabilizer of $[c]$.

\smallskip
\noindent\emph{The dihedral criterion.}
Let $\varphi\colon D_{4n}\to G=\{1,m,r,mr\}$ be the homomorphism of
\Cref{lem:dihedral}, with $\varphi(\rho)=m$ and $\varphi(\tau)=mr$, so that
$\widehat{\beta_{g(c)}}=\varphi(g)(K)$ for every $g\in D_{4n}$. We use throughout the
uniqueness hypothesis~(b) in the following form: if a standard form $\beta_{c'}$ has the
same closure as $\beta_c$, then $\beta_{c'}$ and $\beta_c$ are conjugate (otherwise two
distinct conjugacy classes would represent $K$), so by
\Cref{thm:conjugacy_3braids} $c'=\rho^{2j}(c)$ for some integer $j$. Consequently,
for every $g\in D_{4n}$,
\[
\varphi(g)\in\mathrm{Sym}(K)
\iff \widehat{\beta_{g(c)}}=\widehat{\beta_c}
\iff g(c)=\rho^{2j}(c)\ \text{for some }j .
\tag{$\ast$}
\]

The right-hand condition in $(\ast)$ depends on $g$ only through its image in
the quotient $D_{4n}/\langle\rho^{2}\rangle$, and we now make this quotient precise. The
subgroup $\langle\rho^{2}\rangle$ generated by $\rho^{2}$ is normal in $D_{4n}$:
conjugating $\rho^{2k}$ by the generator $\tau$ gives
$\tau\rho^{2k}\tau^{-1}=\rho^{-2k}\in\langle\rho^{2}\rangle$, and conjugation by any power
of $\rho$ fixes it, so the subgroup is stable under all of $D_{4n}$. Moreover
$\langle\rho^{2}\rangle\subseteq\ker\varphi$, since $\varphi(\rho^{2})=m^{2}=1$. To see
that this inclusion is an equality we compare orders. The rotation $\rho$ has order $2n$,
so $\rho^{2}$ has order $n$ and $|\langle\rho^{2}\rangle|=n$; on the other hand $\varphi$
is onto $G$, which has order $4$, so $|\ker\varphi|=|D_{4n}|/|G|=4n/4=n$. As
$\langle\rho^{2}\rangle$ is a subgroup of $\ker\varphi$ of the same finite order, the two
coincide:
\[
\ker\varphi=\langle\rho^{2}\rangle .
\]
By the first isomorphism theorem $\varphi$ therefore descends to an isomorphism
\[
\overline{\varphi}\colon\ D_{4n}/\langle\rho^{2}\rangle\ \xrightarrow{\ \sim\ }\ G,
\qquad
\overline{\varphi}(\bar\rho)=m,\quad
\overline{\varphi}(\bar\tau)=mr,\quad
\overline{\varphi}(\bar\rho\bar\tau)=r,
\]
and a count of orders confirms the match, $|D_{4n}/\langle\rho^{2}\rangle|=4n/n=4=|G|$.
Since $\langle\rho^{2}\rangle$ is normal, the quotient $G\cong
D_{4n}/\langle\rho^{2}\rangle$ acts on the conjugacy classes of standard forms by the
well-defined rule $\bar g\cdot[c]=[g(c)]$. These conjugacy classes are precisely the
$\langle\rho^{2}\rangle$-orbits $[c]=\{\rho^{2j}(c):j\in\mathbb{Z}\}$ by
\Cref{thm:conjugacy_3braids}. Reading $(\ast)$ through this identification,
$\varphi(g)\in\mathrm{Sym}(K)$ holds precisely when $\bar g$ fixes the class $[c]$, and hence
\[
\mathrm{Sym}(K)=\mathrm{Stab}_{G}([c]),
\]
the stabilizer of the class of $c$ in $G$ for the induced action. Reading off the three nontrivial elements via
$(\ast)$, and using $\tau\rho^{k}=\rho^{-k}\tau$ throughout:
\begin{enumerate}[label=\textup{(\alph*)}]
\item $m\in\mathrm{Sym}(K)\iff\rho(c)=\rho^{2j}(c)$ for some $j\iff\rho^{1-2j}(c)=c$ for
some $j$; as $1-2j$ ranges over all odd integers, this says that $\rho^{p}(c)=c$ for some
odd $p$, i.e.\ that $c$ has odd period.
\item $mr\in\mathrm{Sym}(K)\iff\tau(c)=\rho^{2j}(c)$ for some $j$, that is,
$\rho^{-2j}\tau(c)=\tau\rho^{2j}(c)=c$ for some $j$; since $\rho^{2n}=1$ we
may take $j\ge0$, so by~\eqref{eq:palindrome} this says exactly that $c$ is a palindrome.
\item $r\in\mathrm{Sym}(K)\iff\rho\tau(c)=\rho^{2j}(c)$ for some $j$, that is,
$\rho^{1-2j}\tau(c)=\tau\rho^{2j-1}(c)=c$ for some $j$: the same kind of
condition as in~(b), but with an odd power of $\rho$ in place of the even power.
\end{enumerate}
\noindent We remark that condition~(c) is not used directly below; it is subsumed by the relation $r=m\cdot mr$ used in the symmetry-type analysis that follows.

\smallskip
\noindent\emph{Symmetry type.} Assume $K$ is amphichiral. Since $\mathrm{Sym}(K)$ is a
subgroup of $G\cong(\mathbb{Z}/2)^{2}$ and $r=m\cdot mr$, closure gives one implication:
if \emph{both} $m$ and $mr$ lie in $\mathrm{Sym}(K)$, then so does $r$. Here, however, $K$ is amphichiral, so by
\Cref{def:amphi} at least one of $m,mr$ already lies in $\mathrm{Sym}(K)$.
Combined with $r\in\mathrm{Sym}(K)$ this forces the other one in as well (if
$r,m\in\mathrm{Sym}(K)$ then $mr=r\cdot m\in\mathrm{Sym}(K)$, and symmetrically). Hence,
under the standing amphichirality hypothesis,
\[
\mathrm{Sym}(K)=G
\iff m\in\mathrm{Sym}(K)\ \text{and}\ mr\in\mathrm{Sym}(K).
\]
Combining this with (a)--(b), and noting that the subgroups of $G$ containing at least one
of $m,mr$ are exactly $\{1,m\}$, $\{1,mr\}$, and $G$:
\begin{enumerate}[label=\textup{(\roman*)}]
\item If $c$ is a palindrome and has odd period, then $m,mr\in\mathrm{Sym}(K)$, so
$\mathrm{Sym}(K)=G$ and $K$ is fully amphichiral.
\item If $c$ has odd period but is not a palindrome, then $m\in\mathrm{Sym}(K)$ while
$mr\notin\mathrm{Sym}(K)$, so $\mathrm{Sym}(K)=\{1,m\}$ and $K$ is positive amphichiral.
\item If $c$ is a palindrome but does not have odd period, then $mr\in\mathrm{Sym}(K)$
while $m\notin\mathrm{Sym}(K)$, so $\mathrm{Sym}(K)=\{1,mr\}$ and $K$ is negative
amphichiral.
\end{enumerate}
These three cases are mutually exclusive and exhaust the amphichiral possibilities,
giving (i)--(iii).
\end{proof}

\noindent \Cref{thm:unique-amphi} settles every knot whose $3$-braid
representative has unique conjugacy class. By the Birman--Menasco classification
(\Cref{thm:BM}) the only remaining possibility, after the unknot and torus knots
excluded in \Cref{sec:partI}, is that $K$ is one of the special links carrying a
non-degenerate \emph{flype}, where two distinct conjugacy classes represent the same
knot. We treat this case now, using the flype apparatus (\Cref{def:flype},
\Cref{thm:BM,thm:invert}, and \Cref{lem:flype-twist}) introduced above.

The next lemma supplies the other half of \Cref{thm:B,thm:C}: it treats the amphichiral
knots arising from exception~\textup{(iii)} of \Cref{thm:BM}, and together with
\Cref{thm:flype-amphi} it yields case~\textup{(B)} of \Cref{thm:C} and the remaining half
of \Cref{thm:B}. The knot $6_3$, already pictured in \Cref{fig:braid63closed}, is the
smallest example, and further examples appear in the appendix.
\begin{lemma}[Two braid identities]\label{lem:flype-identities}
For every integer $k\ge2$,
\begin{align}
\sigma_2
\bigl(
\sigma_1\sigma_2^k\sigma_1^{-k}\sigma_2^{-1}
\bigr)
\sigma_2^{-1}
&=
\sigma_1^{k-1}\sigma_2^{-(k-1)}\sigma_1\sigma_2^{-1},
\label{eq:flype-identity-one}\\
\sigma_1
\bigl(
\sigma_1^k\sigma_2^2\sigma_1^{-k-1}\sigma_2^{-1}
\bigr)
\sigma_1^{-1}
&=
\sigma_1^k\sigma_2^{-1}\sigma_1\sigma_2^{-k}.
\label{eq:flype-identity-two}
\end{align}
In particular, the braid inside the parentheses on the left-hand side of
each identity is conjugate to the braid on the corresponding right-hand side.
\end{lemma}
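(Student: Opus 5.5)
The plan is to verify both identities by direct manipulation in $B_3$. The only tools are the braid relation and the half twist $\Delta=\sigma_1\sigma_2\sigma_1$, which satisfies $\Delta\sigma_1=\sigma_2\Delta$ and $\Delta\sigma_2=\sigma_1\Delta$, as recorded after \Cref{def:murasugi_nf}. I would also use the derived conjugation rules
\[
\sigma_1^{-1}\sigma_2\sigma_1=\sigma_2\sigma_1\sigma_2^{-1},\qquad
\sigma_2^{-1}\sigma_1\sigma_2=\sigma_1\sigma_2\sigma_1^{-1},
\]
both immediate from $\sigma_1\sigma_2\sigma_1=\sigma_2\sigma_1\sigma_2$. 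The final sentence of the lemma is then automatic: each identity exhibits the right-hand side as $g\,\beta\,g^{-1}$ for the parenthesised braid $\beta$.

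For \eqref{eq:flype-identity-one}, I first cancel $\sigma_2^{-1}\sigma_2^{-1}$ into $\sigma_2^{-2}$, so the left side reads $\sigma_2\sigma_1\sigma_2^{k}\sigma_1^{-k}\sigma_2^{-2}$. I split $\sigma_2^k=\sigma_2\cdot\sigma_2^{k-1}$ and rewrite $\sigma_2\sigma_1\sigma_2=\Delta$. Next I push $\Delta$ rightward through everything, swapping generators as it passes. This gives $\Delta\sigma_2^{k-1}=\sigma_1^{k-1}\Delta$, then $\Delta\sigma_1^{-k}=\sigma_2^{-k}\Delta$, and finally $\Delta\sigma_2^{-2}=\sigma_1^{-2}\Delta$. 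The result is
\[
\sigma_1^{k-1}\sigma_2^{-k}\sigma_1^{-2}\,\sigma_1\sigma_2\sigma_1=\sigma_1^{k-1}\sigma_2^{-k}\sigma_1^{-1}\sigma_2\sigma_1 .
\]
Comparing with the right-hand side $\sigma_1^{k-1}\sigma_2^{-(k-1)}\sigma_1\sigma_2^{-1}$, it remains to show $\sigma_2^{-1}\sigma_1^{-1}\sigma_2\sigma_1=\sigma_1\sigma_2^{-1}$. This is equivalent to $\sigma_1^{-1}\sigma_2\sigma_1=\sigma_2\sigma_1\sigma_2^{-1}$, which is one of the derived rules above.

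For \eqref{eq:flype-identity-two}, the left side simplifies to $\sigma_1^{k+1}\sigma_2^{2}\sigma_1^{-k-1}\sigma_2^{-1}\sigma_1^{-1}$. I would proceed the same way: try to create a $\Delta$ inside the word and push it through. Alternatively, I can prove the equivalent relation
\[
\sigma_1\sigma_2^{2}\sigma_1^{-k-1}\sigma_2^{-1}\sigma_1^{-1}=\sigma_2^{-1}\sigma_1\sigma_2^{-k}
\]
by cancelling the common prefix $\sigma_1^{k}$. Multiplying out, this amounts to checking that $\sigma_2\sigma_1\sigma_2^{2}\sigma_1^{-k-1}$ equals $\sigma_1\sigma_2^{-k}\sigma_1\sigma_2$. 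For that I use $\sigma_2\sigma_1\sigma_2=\Delta$ on the left, giving $\Delta\sigma_2\sigma_1^{-k-1}=\sigma_1\sigma_2^{-k-1}\Delta$. On the right I rewrite $\sigma_1\sigma_2=\sigma_2^{-1}\Delta$, so both sides become $\sigma_1\sigma_2^{-k-1}\Delta$, and they agree.

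I expect the main obstacle to be bookkeeping rather than any conceptual step. One must track carefully which side the conjugating generator sits on and the direction in which $\Delta$ moves, since a sign slip in an exponent silently breaks the identity. As a safeguard I would check the case $k=2$ by hand, for example via the permutation image and exponent sum as a sanity check, or by a short explicit word reduction, before trusting the general manipulation. Because only the relations above are used, the argument is uniform in $k$, and the hypothesis $k\ge2$ matters only for the later application to the flype family.
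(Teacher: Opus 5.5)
Your proof is correct and follows the paper's approach: for \eqref{eq:flype-identity-one} you push the half twist $\Delta$ rightward exactly as the paper does, and you finish with $\sigma_1^{-1}\sigma_2\sigma_1=\sigma_2\sigma_1\sigma_2^{-1}$, which is the paper's substitution $\sigma_1^{-2}\Delta=\sigma_2\sigma_1\sigma_2^{-1}$. For \eqref{eq:flype-identity-two} you cancel the prefix $\sigma_1^{k}$ and show that both sides of the reduced relation equal $\sigma_1\sigma_2^{-k-1}\Delta$; this meet-in-the-middle check differs only cosmetically from the paper's single forward chain and uses the same $\Delta$-commutation rules.
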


\begin{proof}
Put $\Delta=\sigma_1\sigma_2\sigma_1=\sigma_2\sigma_1\sigma_2$. The braid relation gives $\Delta\sigma_1^j=\sigma_2^j\Delta$ and $\Delta\sigma_2^j=\sigma_1^j\Delta$ for every $j\in\mathbb Z$. We shall also use $\sigma_2\sigma_1=\Delta\sigma_2^{-1}$, $\sigma_1^{-2}\Delta=\sigma_2\sigma_1\sigma_2^{-1}$, and $\sigma_1^{-1}\Delta\sigma_1^{-1}=\sigma_2$, which follow immediately from the two expressions for $\Delta$.

For the first identity, we compute
\begin{align*}
&\sigma_2
 \bigl(\sigma_1\sigma_2^k\sigma_1^{-k}\sigma_2^{-1}\bigr)
 \sigma_2^{-1}
=\sigma_2\sigma_1\sigma_2^k\sigma_1^{-k}\sigma_2^{-2}
=\Delta\sigma_2^{k-1}\sigma_1^{-k}\sigma_2^{-2}\\
&\qquad
=\sigma_1^{k-1}\Delta\sigma_1^{-k}\sigma_2^{-2}
=\sigma_1^{k-1}\sigma_2^{-k}\Delta\sigma_2^{-2}
=\sigma_1^{k-1}\sigma_2^{-k}\sigma_1^{-2}\Delta\\
&\qquad
=\sigma_1^{k-1}\sigma_2^{-k}
 \bigl(\sigma_2\sigma_1\sigma_2^{-1}\bigr)
=\sigma_1^{k-1}\sigma_2^{-(k-1)}\sigma_1\sigma_2^{-1}.
\end{align*}

For the second identity, we similarly obtain
\begin{align*}
&\sigma_1
 \bigl(\sigma_1^k\sigma_2^2\sigma_1^{-k-1}\sigma_2^{-1}\bigr)
 \sigma_1^{-1}
=\sigma_1^{k+1}\sigma_2^2\sigma_1^{-k-1}\sigma_2^{-1}\sigma_1^{-1}
=\sigma_1^k\Delta\sigma_1^{-1}\sigma_2
 \sigma_1^{-k-1}\sigma_2^{-1}\sigma_1^{-1}\\
&\qquad
=\sigma_1^k\sigma_2^{-1}\Delta\sigma_2
 \sigma_1^{-k-1}\sigma_2^{-1}\sigma_1^{-1}
=\sigma_1^k\sigma_2^{-1}\sigma_1\Delta
 \sigma_1^{-k-1}\sigma_2^{-1}\sigma_1^{-1}
=\sigma_1^k\sigma_2^{-1}\sigma_1\sigma_2^{-k-1}
 \Delta\sigma_2^{-1}\sigma_1^{-1}\\
&\qquad
=\sigma_1^k\sigma_2^{-1}\sigma_1\sigma_2^{-k-1}
 \sigma_1^{-1}\Delta\sigma_1^{-1}
=\sigma_1^k\sigma_2^{-1}\sigma_1\sigma_2^{-k-1}\sigma_2
=\sigma_1^k\sigma_2^{-1}\sigma_1\sigma_2^{-k}.
\end{align*}
This proves both identities directly from the braid relation.
\end{proof}

\begin{lemma}[Ko--Lee flype criteria
{\cite{KoLee1999}; see also \cite[Lemma~1]{BM2008}}]
\label{lem:kolee-flype}
Let
\[
L=
\sigma_1^u\sigma_2^v\sigma_1^w\sigma_2^{\varepsilon},
\qquad \varepsilon\in\{\pm1\},
\]
be a flype-admissible $3$-braid.
\begin{enumerate}[label=\textup{(\alph*)}]
\item The displayed flype is non-degenerate if and only if $|v|\ge2$, the integers $u$, $v+\varepsilon$, and $w$ are pairwise distinct, and $u,w\notin\{0,\varepsilon,2\varepsilon\}$.
\item Suppose that $\varepsilon=-1$, so that $L$ admits a negative flype. Then the conjugacy class of $L$ contains a representative admitting a positive flype if and only if $u=1$, $w=1$, or $v=2$.
\end{enumerate}
\end{lemma}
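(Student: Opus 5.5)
The statement just given is the Ko--Lee flype criteria, which the paper attributes to \cite{KoLee1999} and \cite[Lemma~1]{BM2008}. I would not re-derive it from scratch. Instead I would reduce it to \Cref{thm:BM} and to the literature, checking only the translation of conventions.

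For part~(a) the plan has two directions. For the ``if'' direction, suppose $|v|\ge2$, the integers $u,v+\varepsilon,w$ are distinct, and $u,w\notin\{0,\varepsilon,2\varepsilon\}$. Then $L$ is exactly of the form in exception~(iii) of \Cref{thm:BM} with $(p,q,r)=(u,v,w)$. So the classes of $L$ and of $\widetilde L^{\mathrm f}=\sigma_1^u\sigma_2^{\varepsilon}\sigma_1^w\sigma_2^v$ are the two distinct classes listed there. Since $\widetilde L^{\mathrm f}\sim L^{\mathrm f}$, the flype is non-degenerate in the sense of \Cref{def:flype}(d).

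For the ``only if'' direction I would argue by contrapositive: if any condition fails, then $L^{\mathrm f}\sim L$. I would treat each failure separately with explicit conjugations.
\begin{itemize}
\item If $|v|\le1$, the two braids coincide up to cyclic permutation, or the $\sigma_2$-syllable collapses so that both words are cyclic shifts of the same word.
\item If $u=w$, then $L^{\mathrm f}=L$ literally.
\item If $u=v+\varepsilon$ or $w=v+\varepsilon$, I would use $\Delta$-conjugation, which swaps $\sigma_1$ and $\sigma_2$, together with the relation $\sigma_1^{a}\sigma_2^{\varepsilon}\sigma_1^{\varepsilon}=\sigma_2^{\varepsilon}\sigma_1^{\varepsilon}\sigma_2^{a}$. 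This should exchange the roles of $u$ and $v+\varepsilon$ in the pair $\{L,L^{\mathrm f}\}$.
\item If $u$ or $w$ lies in $\{0,\varepsilon,2\varepsilon\}$, the same relation absorbs that syllable, and computations of the type in \Cref{lem:flype-identities} exhibit the conjugacy.
\end{itemize}
This is essentially the degenerate-case analysis of Birman--Menasco, and I would cite \cite[Lemma~1]{BM2008} for it rather than redo every subcase.

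Part~(b) is the main obstacle. It concerns which negative-flype classes also contain a positive-flype representative. For ``if'', I would write down explicit conjugations in each of the three cases.
\begin{itemize}
\item When $v=2$, the identity \eqref{eq:flype-identity-two} is the model. It shows that $\sigma_1^k\sigma_2^{2}\sigma_1^{-k-1}\sigma_2^{-1}$ is conjugate to a word in which $\sigma_2^{-k}$ plays the role of the long syllable. The general case should follow by the same $\Delta$-pushing argument with $u,w$ arbitrary.
\item When $u=1$ or $w=1$, I would use $\sigma_1\sigma_2^{v}\sigma_1^{w}\sigma_2^{-1}$ and conjugate by $\sigma_2$ as in \eqref{eq:flype-identity-one}. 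This converts the single $\sigma_1$ into a $\sigma_2^{+1}$ crossing at the end, which is a positive-flype form.
\end{itemize}

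For ``only if'' I would invoke the uniqueness of Murasugi normal forms (\Cref{thm:murasugi}). I would compute the normal form of a general negative-flype braid and of a general positive-flype braid, then compare their Type-(i) words $c$ up to $\rho^2$ using \Cref{thm:conjugacy_3braids}. Positive and negative flype forms generically give normal forms of different full-twist power, or different syllable counts. Equality forces one of the degenerate exponent coincidences $u=1$, $w=1$, or $v=2$.

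I expect this normal-form bookkeeping to be the delicate step, because of the many sign cases of $u,v,w$. If it becomes unwieldy, I would fall back on citing Ko--Lee \cite{KoLee1999} for (b), as the statement's attribution suggests.
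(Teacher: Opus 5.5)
Your plan matches the paper's treatment: the paper gives no proof of this lemma and simply imports it from \cite{KoLee1999} and \cite[Lemma~1]{BM2008}. Your reduction of the ``if'' half of (a) to exception~(iii) of \Cref{thm:BM}, whose hypotheses are verbatim the non-degeneracy conditions, together with your citation fallback for the remaining directions, is the same reliance on the literature. One caution applies if you try to make the normal-form argument for the ``only if'' half of (b) self-contained: it cannot work for an arbitrary flype-admissible $L$. Take $u=0$, $v=5$, $w=3$: then $L=\sigma_2^{5}\sigma_1^{3}\sigma_2^{-1}\sim\sigma_2^{4}\sigma_1^{3}\sim\sigma_2^{3}\sigma_1^{3}\sigma_2$ is a positive-flype form, yet none of $u=1$, $w=1$, $v=2$ holds. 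So you must assume $L$ is non-degenerate, which is exactly how (b) is used in \Cref{lem:flype-family}.
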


\begin{lemma}[Amphichiral flype family]\label{lem:flype-family}
Suppose that $\beta_c$ is in alternating standard form and that
$K=\widehat{\beta_c}$ is a prime knot of braid index~$3$. Suppose further
that $K$ is amphichiral and that the conjugacy class of $\beta_c$ admits a
non-degenerate flype. Then, up to a rotation,
\[
c=(k,1,1,k)
\]
for some $k\ge2$. In particular, $c$ is a palindrome.
\end{lemma}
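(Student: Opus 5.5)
The plan is to determine, in two stages, which alternating standard forms admit a non-degenerate flype. First we pin down the possible words $c$, using the explicit Birman--Menasco shape together with the Murasugi uniqueness of \Cref{thm:conjugacy_3braids}. Then we impose amphichirality through the dihedral action of \Cref{lem:dihedral}.

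\textbf{Step 1: the flype forces $c$ to have length $4$.} By hypothesis the class of $\beta_c$ contains a representative $L=\sigma_1^u\sigma_2^v\sigma_1^w\sigma_2^{\varepsilon}$ satisfying the non-degeneracy conditions of \Cref{lem:kolee-flype}(a). Since $\beta_c$ has full-twist power $0$ and a Type-(i) factor, we bring $L$ into Murasugi normal form and compare it with $\beta_c$. Signs are the issue here. If $u,w>0$ and $v<0$ with $\varepsilon=-1$, then $L$ is already a standard form $\beta_{(u,-v,w,1)}$. Otherwise $L$ must be rewritten. We use $\sigma_2^{\pm1}$-absorptions via $\Delta$, in the style of the computations of \Cref{lem:flype-identities}, and extract full twists, tracking the power $d$. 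By \Cref{thm:murasugi}, only the sign patterns that yield $d=0$ and a Type-(i) word survive. By \Cref{thm:conjugacy_3braids} this gives, up to $\rho^2$, $c=(u,|v|,w,1)$ or a closely related length-$4$ word. Identities \eqref{eq:flype-identity-one} and \eqref{eq:flype-identity-two} are exactly the conversions needed for the mixed-sign cases; they produce words such as $(k-1,k-1,1,1)$ and $(k,1,1,k)$. Accordingly we expect the conclusion of this step to be $c=(p,q,r,1)$ up to rotation, with $p,r\ge 1$ and $q\ge2$, subject to the distinctness conditions. Whether positive-flype presentations contribute additional words will be settled with \Cref{lem:kolee-flype}(b).

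\textbf{Step 2: amphichirality pins down the entries.} By \Cref{lem:dihedral}, $m(K)$ and $mr(K)$ are the closures of $\beta_{\rho(c)}$ and $\beta_{\tau(c)}$. Because $K$ now has exactly two conjugacy classes, namely $[c]$ and the class of its flype partner, amphichirality says that $\rho(c)$ or $\tau(c)$ lies in $[c]\cup[c^{\mathrm f}]$. For $c=(p,q,r,1)$ the partner is $c^{\mathrm f}=(p,1,r,q)$, as in \Cref{def:flype}. The $\rho^2$-orbits of all of these length-$4$ words can be listed explicitly. Comparing entries position by position, the entry $1$ sitting in a $b$-slot of $c$ must reappear in a $b$-slot of the orbit element. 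This forces relations such as $q=p$, $r=1$, or $p=1$, $q=r$. Each branch should reduce, after rotation, to $(k,1,1,k)$ with $k\ge2$. Branches where the resulting word has a repeated pattern or a $1$ in a forbidden position are excluded by the non-degeneracy conditions of \Cref{lem:kolee-flype}(a): the values $p,r\ne 1,2$ in the translated signs and the requirement that $p$, $q-1$, $r$ be distinct. Finally, $(k,1,1,k)$ is a palindrome by direct inspection, since it is fixed by $\tau$.

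\textbf{Main obstacle.} The hardest part is Step 1, the sign bookkeeping. The Birman--Menasco representative $\sigma_1^p\sigma_2^q\sigma_1^r\sigma_2^{\varepsilon}$ need not be in alternating standard form, so the relation between $(p,q,r,\varepsilon)$ and $c$ has to be established through conjugation and full-twist extraction. We would first split by the signs of $p$, $q$, $r$ and $\varepsilon$, and discard any case whose exponent sum is incompatible with $e(\beta_c)$ or whose twist power is nonzero. The surviving cases are then normalized via the identities of \Cref{lem:flype-identities}. These identities look tailored to precisely the surviving cases, which suggests the case analysis closes.
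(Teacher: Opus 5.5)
Your proposal has a genuine gap: Step~1, the step that carries the weight, is only stated as an expectation, and the tools you cite for it do not cover it.

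\textbf{The gap in Step~1.} The identities of \Cref{lem:flype-identities} treat only $\sigma_1\sigma_2^k\sigma_1^{-k}\sigma_2^{-1}$ and $\sigma_1^k\sigma_2^2\sigma_1^{-k-1}\sigma_2^{-1}$. In the paper these two families appear only after amphichirality has already cut the parameters down, through $e(L)=u+v+w-1=0$ and the Ko--Lee alternative $u=1$, $w=1$ or $v=2$. You want to classify without amphichirality, so you must treat an arbitrary non-degenerate $L=\sigma_1^u\sigma_2^v\sigma_1^w\sigma_2^{-1}$ one sign pattern at a time. That requires three things:
\begin{enumerate}[label=\textup{(\arabic*)}]
\item the general conversion $\sigma_1^u\sigma_2^v\sigma_1^{-w}\sigma_2^{-1}\sim\beta_{(u,1,v-1,w-1)}$ for $u\ge1$, $v,w\ge2$; the two identities are its special cases $(u,v,w)=(1,k,k)$ and $(k,2,k+1)$;
\item a similar conversion when $u<0<v,w$;
\item a proof that every remaining pattern has nonzero full-twist power. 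For example, $\sigma_1^u\sigma_2^v\sigma_1^w\sigma_2^{-1}=h\,\sigma_1^{u-1}\sigma_2^{-1}\sigma_1^{v-2}\sigma_2^{-1}\sigma_1^{w-1}\sigma_2^{-1}$ has power $1$.
\end{enumerate}
Two further points in Step~1 do not work as stated. Your filter ``exponent sum incompatible with $e(\beta_c)$'' is vacuous, because $L\sim\beta_c$ already forces $e(L)=e(\beta_c)$; it only has content once amphichirality gives $e=0$. Positive-flype representatives are also not settled by \Cref{lem:kolee-flype}(b). They are handled by mirroring: if the class of $\beta_c$ carries a non-degenerate positive flype, then the class of $\beta_{\rho(c)}$ carries a negative one.

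\textbf{Step~2 is correct, and it is a different endgame from the paper's.} Granting Step~1, set $[c]=\{(p,q,r,1),(r,1,p,q)\}$ and $[c^{\mathrm f}]=\{(p,1,r,q),(r,q,p,1)\}$. Matching $\rho(c)=(q,r,1,p)$ or $\tau(c)=(1,r,q,p)$ against these four words gives one of three outcomes:
\begin{itemize}
\item $p=q=r=1$, which is excluded by $q\ge2$;
\item $p=1$ and $q=r$, so $c=(1,k,k,1)$;
\item $r=1$ and $p=q$, so $c=(k,k,1,1)$.
\end{itemize}
Both surviving words are rotations of $(k,1,1,k)$. The odd-rotation (positive-flype) case reduces to this one by applying $\rho^{-1}$. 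This route never uses \Cref{thm:invert} or \Cref{lem:kolee-flype}(b).

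\textbf{How the paper avoids Step~1.} The paper spends exactly those two results so that it never needs your Step~1:
\begin{itemize}
\item Invertibility upgrades amphichirality to $mK=K$.
\item Hence $m(L)$ is a positive-flype representative lying in one of the two classes, and Ko--Lee~(b) gives $u=1$, $w=1$ or $v=2$.
\item Combined with $e(L)=0$, only the two identity families remain.
\item $\rho$ together with \Cref{thm:conjugacy_3braids} then transfers the conclusion to both classes.
\end{itemize}

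To close the gap, either carry out the full sign-pattern analysis (its conclusion is true), or import the paper's reduction before converting $L$ to standard form.
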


\begin{proof}
Because the conjugacy class of $\beta_c$ admits a non-degenerate flype,
\Cref{thm:invert} implies that $K$ is invertible. Since $K$ is also
amphichiral, it is in fact fully amphichiral: if either $m$ or $mr$ fixes
$K$, then multiplication by $r\in\mathrm{Sym}(K)$ shows that the other one
does as well. In particular, $mK=K$ as oriented knot types.

Mirroring a positive flype gives a negative flype and preserves
non-degeneracy. Therefore one of the two conjugacy classes of $3$-braids
representing $K$ contains a non-degenerate negative-flype representative
$L=\sigma_1^u\sigma_2^v\sigma_1^w\sigma_2^{-1}$. Its literal flype partner
and the conjugate flype-partner representative from \Cref{def:flype} are,
respectively,
\[
L^{\mathrm f}=\sigma_1^w\sigma_2^v\sigma_1^u\sigma_2^{-1},
\qquad
\widetilde L^{\mathrm f}=\sigma_1^u\sigma_2^{-1}\sigma_1^w\sigma_2^v,
\]
and these two braids are conjugate. The braids $L$ and $L^{\mathrm f}$
represent the two distinct conjugacy classes associated with the
non-degenerate flype.

The mirror $m(L)$ is a positive-flype representative of $mK=K$. Hence
$m(L)$ belongs to one of the two conjugacy classes represented by $L$ and
$L^{\mathrm f}$. Both classes contain a negative-flype representative,
namely $L$ or $L^{\mathrm f}$, respectively. Thus one of the two classes
contains representatives admitting both a negative and a positive flype.
Applying \Cref{lem:kolee-flype}(b) to the negative-flype representative in
that class gives $u=1$, $w=1$, or $v=2$.
If the criterion is applied to the class of $L^{\mathrm f}$, the roles of
$u$ and $w$ are interchanged, so the same three alternatives result.

Every $3$-braid representative of $K$ is minimal because $b(K)=3$.
Amphichirality and \eqref{eq:wmin-zero} therefore give
$e(L)=u+v+w-1=0$. We now consider the three alternatives above. Passing from
$L$ to $L^{\mathrm f}$ interchanges $u$ and $w$, so the cases $u=1$ and
$w=1$ are equivalent.

\smallskip
\noindent\emph{Case 1: $u=1$ or $w=1$.}
After interchanging $L$ and $L^{\mathrm f}$ if necessary, assume $u=1$.
Then $u+v+w-1=0$ gives $w=-v$. If $v=-k<0$, then
$k\ge2$ by \Cref{lem:kolee-flype}(a), and
\[
L=\sigma_1\sigma_2^{-k}\sigma_1^k\sigma_2^{-1}
=\beta_{(1,k,k,1)}.
\]
The word $(1,k,k,1)$ is a rotation of $(k,1,1,k)$.

If $v=k>0$, then $w=-k$. By \Cref{lem:kolee-flype}(a), the integers
$u$, $v-1$, and $w$ are pairwise distinct. The value $k=2$ would give
$u=v-1=1$, so $k\ge3$. By \eqref{eq:flype-identity-one},
\[
L=\sigma_1\sigma_2^k\sigma_1^{-k}\sigma_2^{-1}
\sim\sigma_1^{k-1}\sigma_2^{-(k-1)}\sigma_1\sigma_2^{-1}
=\beta_{(k-1,k-1,1,1)}.
\]
Here $k-1\ge2$, and $(k-1,k-1,1,1)$ is a rotation of
$(k-1,1,1,k-1)$.

\smallskip
\noindent\emph{Case 2: $v=2$.}
The equality $u+v+w-1=0$ gives $u+w=-1$. For a non-degenerate negative
flype, \Cref{lem:kolee-flype}(a) gives $u,w\notin\{0,-1,-2\}$ and
$u,w\ne v-1=1$.
It follows that exactly one of $u,w$ is a positive integer at least~$2$.
After interchanging $L$ and $L^{\mathrm f}$ if necessary, write $u=k\ge2$
and $w=-k-1$. By \eqref{eq:flype-identity-two},
\[
L=\sigma_1^k\sigma_2^2\sigma_1^{-k-1}\sigma_2^{-1}
\sim\sigma_1^k\sigma_2^{-1}\sigma_1\sigma_2^{-k}
=\beta_{(k,1,1,k)}.
\]

Thus at least one of the two conjugacy classes representing $K$ contains
an alternating standard form whose word $d$ is a rotation of
$(k,1,1,k)$ for some $k\ge2$. By \Cref{lem:dihedral}(a), the standard
form with word $\rho(d)$ represents $mK=K$. Since $k\ge2$, the word
$\rho(d)$ is not an even rotation of $d$; hence
\Cref{thm:conjugacy_3braids} shows that $\beta_{\rho(d)}$ lies in the
other conjugacy class. Consequently, the standard words occurring in
both conjugacy classes are rotations of $(k,1,1,k)$.

Finally, $\beta_c$ belongs to one of these two classes. Applying
\Cref{thm:conjugacy_3braids} within that class shows that $c$ is an even
rotation of either $d$ or $\rho(d)$. Therefore $c$ is, without any parity
restriction, a rotation of $(k,1,1,k)$. This word is palindromic up to
rotation, completing the proof.
\end{proof}

\begin{theorem}[Classification: non-degenerate flype]
\label{thm:flype-amphi}
Retain hypothesis~\textup{(a)} of \Cref{thm:unique-amphi}, but replace
hypothesis~\textup{(b)} by the assumption that the conjugacy class of
$\beta_c$ admits a non-degenerate flype. Then $K$ is amphichiral if and
only if $c$ is a rotation of $(k,1,1,k)$ with $k\ge2$. In that case $K$
is fully amphichiral.
\end{theorem}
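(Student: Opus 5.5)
The proof splits into the two implications; the forward direction is mostly already done.

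\emph{Forward direction.} If $K$ is amphichiral, then \Cref{lem:flype-family} applies verbatim: its hypotheses are exactly hypothesis~(a) of \Cref{thm:unique-amphi}, amphichirality, and a non-degenerate flype in the class of $\beta_c$. It yields that $c$ is a rotation of $(k,1,1,k)$ with $k\ge2$. The first paragraph of that proof also gives full amphichirality. Indeed, \Cref{thm:invert} shows that a non-degenerate flype forces $r\in\mathrm{Sym}(K)$. Combined with $m$ or $mr$ lying in $\mathrm{Sym}(K)$, this gives $\mathrm{Sym}(K)=G$.

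\emph{Converse.} Suppose $c$ is a rotation of $(k,1,1,k)$ with $k\ge2$. By \Cref{lem:dihedral}(d) and \Cref{lem:dihedral}(a), closures of words differing by any rotation are $K$ or $mK$. So it suffices to treat $d=(k,1,1,k)$ and show that its closure is amphichiral; amphichirality is insensitive to replacing $K$ by $mK$.

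Here $\tau(d)=(k,1,1,k)=d$, so \Cref{lem:dihedral}(b) gives $\widehat{\beta_d}=mr(\widehat{\beta_d})$ directly. Hence $K$ is negative amphichiral, and in particular amphichiral. This step needs no uniqueness hypothesis at all, because it exhibits an equality of braids rather than comparing conjugacy classes.

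Finally, since the class of $\beta_c$ admits a non-degenerate flype by hypothesis, \Cref{thm:invert} gives $r\in\mathrm{Sym}(K)$. Together with $mr\in\mathrm{Sym}(K)$ this yields $m=r\cdot mr\in\mathrm{Sym}(K)$, so $K$ is fully amphichiral.

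\emph{Main obstacle.} The one delicate point is applying \Cref{thm:invert} correctly. It requires the flype to be admitted by the class of $\beta$, a braid representing $K$ on $3$ strands with braid index $3$. Both conditions hold here by hypothesis: the class of $\beta_c$ admits the flype, and $K$ has braid index $3$. For a rotated word, $K$ might be replaced by $mK$. This is harmless: mirroring preserves non-degeneracy of flypes (as noted in the proof of \Cref{lem:flype-family}), and full amphichirality of $mK$ is equivalent to that of $K$.

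I therefore expect the converse to be immediate, and the real content to sit in \Cref{lem:flype-family}, which is already proved.
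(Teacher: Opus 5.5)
Your proof is correct and is essentially the paper's argument. The forward direction is \Cref{lem:flype-family}. For the converse, the palindromic symmetry of $(k,1,1,k)$ together with \Cref{lem:dihedral} gives $mr\in\mathrm{Sym}(K)$, and then \Cref{thm:invert} supplies $r$ and hence $m=r\cdot mr$.

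The only difference is cosmetic. You first reduce to the base word $(k,1,1,k)$ via rotations and the mirror, whereas the paper applies \eqref{eq:palindrome} directly to $c$: some $\tau\rho^{2s}$ fixes $c$, and $\varphi(\tau\rho^{2s})=mr$. Your concern about passing to $mK$ in the final step is also unnecessary, since \Cref{thm:invert} is applied to $\beta_c$ itself.
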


\begin{proof}
If $K$ is amphichiral, \Cref{lem:flype-family} gives that $c$ is a
rotation of $(k,1,1,k)$ with $k\ge2$.

Conversely, suppose that $c$ is a rotation of $(k,1,1,k)$. Since $c$ is
a palindrome in the sense of \eqref{eq:palindrome}, there is an integer
$s\ge0$ such that
\[
\tau\rho^{2s}(c)=c.
\]
Because $\varphi(\tau\rho^{2s})=mr$, \Cref{lem:dihedral} gives
$K=mr(K)$, so $K$ is amphichiral. Finally, a non-degenerate flype makes
$K$ invertible by \Cref{thm:invert}. Thus $r,mr\in\mathrm{Sym}(K)$, and
hence
\[
m=r\cdot mr\in\mathrm{Sym}(K).
\]
Therefore $\mathrm{Sym}(K)=G$, and $K$ is fully amphichiral.
\end{proof}

\noindent We now prove \Cref{thm:B,thm:C} together; both rest on the same
structural dichotomy.

\begin{proof}[Proof of \Cref{thm:B,thm:C}]
Since $b(K)=3$, the knot $K$ is neither the unknot nor a closed $2$-braid;
in particular, it is not a $(2,k)$-torus knot. Therefore the first two
exceptions in the Birman--Menasco classification (\Cref{thm:BM}) do not
occur. Exactly one of the following alternatives holds:
\begin{enumerate}[label=\textup{(\roman*)}]
\item $K$ is represented by a unique conjugacy class of $3$-braids; or
\item $K$ is represented by two conjugacy classes related by a
non-degenerate flype.
\end{enumerate}
In the first alternative, \Cref{thm:unique-amphi} gives the criterion and the three symmetry types in \Cref{thm:C}(A). In the second alternative, \Cref{thm:flype-amphi} gives \Cref{thm:C}(B). For the necessity direction of \Cref{thm:B} in the second alternative, \Cref{thm:flype-amphi} shows that if $K$ is amphichiral, then $c$ is a rotation of $(k,1,1,k)$ for some $k\ge2$. Since every such rotation is a palindrome in the sense of \eqref{eq:palindrome}, the required condition follows. Conversely, the sufficiency direction of \Cref{thm:B} follows directly from \Cref{lem:dihedral}, without any assumption on the number of conjugacy classes. If $c$ is a palindrome, then $\tau\rho^{2s}(c)=c$ for some integer $s\ge0$. Since $\varphi(\tau\rho^{2s})=mr$, \Cref{lem:dihedral} gives $K=mr(K)$. If instead $c$ has odd period $p$, then $\rho^p(c)=c$, and since $p$ is odd, $\varphi(\rho^p)=m^p=m$. Thus \Cref{lem:dihedral} gives $K=m(K)$. Hence in either case $K$ is amphichiral, establishing the converse implication. The two alternatives are mutually exclusive and exhaustive, which completes both proofs.
\end{proof}

\begin{remark}\label{rmk:case-B-and-data}
Case~B is the unique situation in which a palindromic $c$ of even
period yields full amphichirality rather than negative amphichirality
alone; this is why the negative-only case in
\Cref{thm:C}\textup{(A)(iii)} is confined to the unique-class setting. By \Cref{thm:C}, a knot is positive but not fully amphichiral exactly
when $c$ has odd period without being a palindrome; among all prime amphichiral knots
of braid index~$3$ with crossing number at most~$14$, only $14a_{18676}$
satisfies this condition, making it a rare case within this range
\textup{(}\Cref{tab:bi3-cn14}\textup{)}. The classification of
\Cref{thm:B,thm:C} agrees with the tabulated data of
\Cref{app:bi3-cn12,app:bi3-cn14} for every prime amphichiral $3$-braid knot with
crossing number up to~$14$.
\end{remark}

\newpage

\bibliographystyle{alpha}
\bibliography{mybib}

@article {BM2008,
    AUTHOR = {Birman, Joan S. and Menasco, William W.},
     TITLE = {A note on closed 3-braids},
   JOURNAL = {Commun. Contemp. Math.},
  FJOURNAL = {Communications in Contemporary Mathematics},
    VOLUME = {10},
      YEAR = {2008},
     PAGES = {1033--1047},
      ISSN = {0219-1997,1793-6683},
   MRCLASS = {57M25 (57M27)},
  MRNUMBER = {2468377},
MRREVIEWER = {Martin\ Scharlemann},
       DOI = {10.1142/S0219199708003150},
       URL = {https://doi.org/10.1142/S0219199708003150},
}

@book {KasselTuraev,
    AUTHOR = {Kassel, Christian and Turaev, Vladimir},
     TITLE = {Braid groups},
    SERIES = {Graduate Texts in Mathematics},
    VOLUME = {247},
      NOTE = {With the graphical assistance of Olivier Dodane},
 PUBLISHER = {Springer, New York},
      YEAR = {2008},
     PAGES = {xii+340},
      ISBN = {978-0-387-33841-5},
   MRCLASS = {20F36 (20C08 20F10 20F60 20M05 55R80 57M25 57M50)},
  MRNUMBER = {2435235},
MRREVIEWER = {Stephen\ P.\ Humphries},
       DOI = {10.1007/978-0-387-68548-9},
       URL = {https://doi.org/10.1007/978-0-387-68548-9},
}

@misc{knotinfo,
      title={KnotInfo: Table of Knot Invariants}, 
      author={J.~C. Cha and C.~Livingston},
      year={2004},
      note={\url{https://knotinfo.org/search-general.php}, 
accessed June 2026}
      }

@article {BM1993,
    AUTHOR = {Birman, Joan S. and Menasco, William W.},
     TITLE = {Studying links via closed braids. {VI}. {A} nonfiniteness
              theorem},
   JOURNAL = {Pacific J. Math.},
  FJOURNAL = {Pacific Journal of Mathematics},
    VOLUME = {156},
      YEAR = {1992},
    NUMBER = {2},
     PAGES = {265--285},
      ISSN = {0030-8730,1945-5844},
   MRCLASS = {57M25 (20F36)},
  MRNUMBER = {1186805},
MRREVIEWER = {Hugh\ Reynolds\ Morton},
       URL = {http://projecteuclid.org/euclid.pjm/1102634977},
}

@article {DasbachLowrance2011,
    AUTHOR = {Dasbach, Oliver T. and Lowrance, Adam M.},
     TITLE = {Turaev genus, knot signature, and the knot homology
              concordance invariants},
   JOURNAL = {Proc. Amer. Math. Soc.},
  FJOURNAL = {Proceedings of the American Mathematical Society},
    VOLUME = {139},
      YEAR = {2011},
    NUMBER = {7},
     PAGES = {2631--2645},
      ISSN = {0002-9939,1088-6826},
   MRCLASS = {57M25 (57M27)},
  MRNUMBER = {2784832},
MRREVIEWER = {Dale\ P. O. Rolfsen},
       DOI = {10.1090/S0002-9939-2010-10698-6},
       URL = {https://doi.org/10.1090/S0002-9939-2010-10698-6},
}

@article {DP2013,
    AUTHOR = {Dynnikov, I. A. and Prasolov, M. V.},
     TITLE = {Bypasses for rectangular diagrams. {A} proof of the {J}ones
              conjecture and related questions},
   JOURNAL = {Trans. Moscow Math. Soc.},
  FJOURNAL = {Transactions of the Moscow Mathematical Society},
      YEAR = {2013},
     PAGES = {97--144},
      ISSN = {0077-1554,1547-738X},
   MRCLASS = {57M25},
  MRNUMBER = {3235791},
       DOI = {10.1090/s0077-1554-2014-00210-7},
       URL = {https://doi.org/10.1090/s0077-1554-2014-00210-7},
}

@article {KoLee1999,
    AUTHOR = {Ko, Ki Hyoung and Lee, Sang Jin},
     TITLE = {Flypes of closed {$3$}-braids in the standard contact space},
   JOURNAL = {J. Korean Math. Soc.},
  FJOURNAL = {Journal of the Korean Mathematical Society},
    VOLUME = {36},
      YEAR = {1999},
    NUMBER = {1},
     PAGES = {51--71},
      ISSN = {0304-9914,2234-3008},
   MRCLASS = {57M25},
  MRNUMBER = {1669133},
MRREVIEWER = {Rostislav\ Matveyev},
}

@article {StoimenowMFW,
    AUTHOR = {Stoimenow, A.},
     TITLE = {On the crossing number of positive knots and braids and braid
              index criteria of {J}ones and {M}orton-{W}illiams-{F}ranks},
   JOURNAL = {Trans. Amer. Math. Soc.},
  FJOURNAL = {Transactions of the American Mathematical Society},
    VOLUME = {354},
      YEAR = {2002},
    NUMBER = {10},
     PAGES = {3927--3954},
      ISSN = {0002-9947,1088-6850},
   MRCLASS = {57M25 (20F36)},
  MRNUMBER = {1926860},
MRREVIEWER = {Stephen\ P.\ Humphries},
       DOI = {10.1090/S0002-9947-02-03022-2},
       URL = {https://doi.org/10.1090/S0002-9947-02-03022-2},
}

\appendix
\crefalias{section}{appendix}

\begin{landscape}
\section[Prime 3-braid-index knot table (crossing number <= 12)]%
        {Prime $3$-braid-index knot table \cite{knotinfo} (crossing number $\le 12$)}
\label{app:bi3-cn12}
\renewcommand{\arraystretch}{1.25}
\begin{longtable}{c c c l c}
\caption{Prime amphichiral knots of braid index $3$ with crossing number at most $12$.}
\label{tab:bi3-cn12}\\
\toprule
\textbf{Knot} & \textbf{Word $c$} & \textbf{Braid word} &
\textbf{Classification} & \textbf{Symmetry type}\\
\hline
\endfirsthead
\multicolumn{5}{l}{\small\upshape \Cref{tab:bi3-cn12} continued from previous page}\\
\hline
\textbf{Knot} & \textbf{Word $c$} & \textbf{Braid word} &
\textbf{Classification} & \textbf{Symmetry type}\\
\hline
\endhead
\hline
\endfoot
\texttt{4a\_1} & $(1,1,1,1)$ & \texttt{[1,-2,1,-2]} & palindrome \& $p=1$ & fully\\
\texttt{6a\_3} & $(2,1,1,2)$ & \texttt{[1,1,-2,1,-2,-2]} & palindrome (flype) & fully\\
\texttt{8a\_9} & $(3,1,1,3)$ & \texttt{[1,1,1,-2,1,-2,-2,-2]} & palindrome (flype) & fully\\
\texttt{8a\_17} & $(2,1,1,1,1,2)$ & \texttt{[1,1,-2,1,-2,1,-2,-2]} & palindrome & negative\\
\texttt{8a\_18} & $(1,1,1,1,1,1,1,1)$ & \texttt{[1,-2,1,-2,1,-2,1,-2]} & palindrome \& $p=1$ & fully\\
\texttt{10a\_17} & $(4,1,1,4)$ & \texttt{[1,1,1,1,-2,1,-2,-2,-2,-2]} & palindrome (flype) & fully\\
\texttt{10a\_79} & $(3,2,2,3)$ & \texttt{[1,1,1,-2,-2,1,1,-2,-2,-2]} & palindrome & negative\\
\texttt{10a\_99} & $(2,1,2,2,1,2)$ & \texttt{[1,1,-2,1,1,-2,-2,1,-2,-2]} & palindrome \& $p=3$ & fully\\
\texttt{10a\_109} & $(1,2,2,2,2,1)$ & \texttt{[1,-2,-2,1,1,-2,-2,1,1,-2]} & palindrome & negative\\
\texttt{10a\_118} & $(1,1,2,1,1,2,1,1)$ & \texttt{[1,-2,1,1,-2,1,-2,-2,1,-2]} & palindrome & negative\\
\texttt{10a\_123} & $(1,1,1,1,1,1,1,1,1,1)$ & \texttt{[1,-2,1,-2,1,-2,1,-2,1,-2]} & palindrome \& $p=1$ & fully\\
\texttt{12a\_819} & $(1,3,1,1,1,1,3,1)$ & \texttt{[1,-2,-2,-2,1,-2,1,-2,1,1,1,-2]} & palindrome & negative\\
\texttt{12a\_1209} & $(3,1,1,1,1,1,1,3)$ & \texttt{[1,1,1,-2,1,-2,1,-2,1,-2,-2,-2]} & palindrome & negative\\
\texttt{12a\_1211} & $(2,1,1,1,1,1,1,1,1,2)$ & \texttt{[1,1,-2,1,-2,1,-2,1,-2,1,-2,-2]} & palindrome & negative\\
\texttt{12a\_1218} & $(1,1,4,4,1,1)$ & \texttt{[1,-2,1,1,1,1,-2,-2,-2,-2,1,-2]} & palindrome & negative\\
\texttt{12a\_1225} & $(1,1,2,2,2,2,1,1)$ & \texttt{[1,-2,1,1,-2,-2,1,1,-2,-2,1,-2]} & palindrome & negative\\
\texttt{12a\_1229} & $(2,1,1,2,2,1,1,2)$ & \texttt{[1,1,-2,1,-2,-2,1,1,-2,1,-2,-2]} & palindrome & negative\\
\texttt{12a\_1249} & $(1,2,1,1,1,1,1,1,2,1)$ & \texttt{[1,-2,-2,1,-2,1,-2,1,-2,1,1,-2]} & palindrome & negative\\
\texttt{12a\_1254} & $(2,1,3,3,1,2)$ & \texttt{[1,1,-2,1,1,1,-2,-2,-2,1,-2,-2]} & palindrome & negative\\
\texttt{12a\_1260} & $(1,3,2,2,3,1)$ & \texttt{[1,-2,-2,-2,1,1,-2,-2,1,1,1,-2]} & palindrome & negative\\
\texttt{12a\_1273} & $(1,5,5,1)$ & \texttt{[1,-2,-2,-2,-2,-2,1,1,1,1,1,-2]} & palindrome (flype) & fully\\
\texttt{12a\_1288} & $(3,3,3,3)$ & \texttt{[1,1,1,-2,-2,-2,1,1,1,-2,-2,-2]} & palindrome \& $p=1$ & fully\\
\bottomrule
\end{longtable}

\smallskip
\noindent\footnotesize
Each braid word is the standard form
$\beta_c=\sigma_{1}^{a_1}\sigma_{2}^{-b_1}\cdots\sigma_{1}^{a_n}\sigma_{2}^{-b_n}$
read directly from the word $c=(a_1,b_1,\dots,a_n,b_n)$, so every
$\sigma_{1}$-syllable carries a positive exponent and every $\sigma_{2}$-syllable a
negative one (\Cref{def:standard_form}); it represents $K$ as a closed
braid. ``(flype)'' marks the $(k,1,1,k)$ family, whose full amphichirality is
supplied by \Cref{thm:flype-amphi}; all other entries are determined by
\Cref{thm:unique-amphi}, and the combined classification is given by \Cref{thm:B,thm:C}.
Every entry is alternating, in accordance with
\Cref{thm:amphi-implies-alt}: no non-alternating amphichiral knot of braid
index~$3$ exists, in this or any crossing-number range. The table therefore lists
\emph{all prime} amphichiral knots of braid index~$3$ with crossing number at most~$12$ and
\Cref{thm:B,thm:C} apply to each. The entry \texttt{6a\_3}~($=6_3$), the smallest example in the non-degenerate flype family $(k,1,1,k)$, is depicted as a closed braid in \Cref{fig:braid63closed}.
\end{landscape}

\begin{landscape}
\section[Prime 3-braid-index knot table (crossing number 14)]%
        {Prime $3$-braid-index knot table \cite{knotinfo} (crossing number $14$)}
\label{app:bi3-cn14}
\renewcommand{\arraystretch}{1.25}
\footnotesize
\begin{longtable}{c c c l c}
\caption{Prime amphichiral knots of braid index $3$ with crossing number $14$.}
\label{tab:bi3-cn14}\\
\toprule
\textbf{Knot} & \textbf{Word $c$} & \textbf{Braid word} &
\textbf{Classification} & \textbf{Symmetry type}\\
\hline
\endfirsthead
\multicolumn{5}{l}{\small\upshape \Cref{tab:bi3-cn14} continued from previous page}\\
\hline
\textbf{Knot} & \textbf{Word $c$} & \textbf{Braid word} &
\textbf{Classification} & \textbf{Symmetry type}\\
\hline
\endhead
\hline
\endfoot
\texttt{14a\_13528} & $(1,1,4,1,1,4,1,1)$ & \texttt{[1,-2,1,1,1,1,-2,1,-2,-2,-2,-2,1,-2]} & palindrome & negative\\
\texttt{14a\_18218} & $(5,2,2,5)$ & \texttt{[1,1,1,1,1,-2,-2,1,1,-2,-2,-2,-2,-2]} & palindrome & negative\\
\texttt{14a\_18301} & $(3,2,2,2,2,3)$ & \texttt{[1,1,1,-2,-2,1,1,-2,-2,1,1,-2,-2,-2]} & palindrome & negative\\
\texttt{14a\_18306} & $(1,1,1,3,1,1,3,1,1,1)$ & \texttt{[1,-2,1,-2,-2,-2,1,-2,1,1,1,-2,1,-2]} & palindrome & negative\\
\texttt{14a\_18362} & $(2,3,2,2,3,2)$ & \texttt{[1,1,-2,-2,-2,1,1,-2,-2,1,1,1,-2,-2]} & palindrome \& $p=3$ & fully\\
\texttt{14a\_18408} & $(3,4,4,3)$ & \texttt{[1,1,1,-2,-2,-2,-2,1,1,1,1,-2,-2,-2]} & palindrome & negative\\
\texttt{14a\_18599} & $(1,3,2,1,1,2,3,1)$ & \texttt{[1,-2,-2,-2,1,1,-2,1,-2,-2,1,1,1,-2]} & palindrome & negative\\
\texttt{14a\_18636} & $(1,2,4,4,2,1)$ & \texttt{[1,-2,-2,1,1,1,1,-2,-2,-2,-2,1,1,-2]} & palindrome & negative\\
\texttt{14a\_18676} & $(1,2,4,1,2,4)$ & \texttt{[1,-2,-2,1,1,1,1,-2,1,1,-2,-2,-2,-2]} & $p=3$ & positive\\
\texttt{14a\_18735} & $(1,2,2,2,2,2,2,1)$ & \texttt{[1,-2,-2,1,1,-2,-2,1,1,-2,-2,1,1,-2]} & palindrome & negative\\
\texttt{14a\_18774} & $(2,2,1,2,2,1,2,2)$ & \texttt{[1,1,-2,-2,1,-2,-2,1,1,-2,1,1,-2,-2]} & palindrome & negative\\
\texttt{14a\_18986} & $(2,1,4,4,1,2)$ & \texttt{[1,1,-2,1,1,1,1,-2,-2,-2,-2,1,-2,-2]} & palindrome & negative\\
\texttt{14a\_19162} & $(1,4,2,2,4,1)$ & \texttt{[1,-2,-2,-2,-2,1,1,-2,-2,1,1,1,1,-2]} & palindrome & negative\\
\texttt{14a\_19298} & $(1,6,6,1)$ & \texttt{[1,-2,-2,-2,-2,-2,-2,1,1,1,1,1,1,-2]} & palindrome (flype) & fully\\
\texttt{14a\_19385} & $(2,1,1,2,1,1,2,1,1,2)$ & \texttt{[1,1,-2,1,-2,-2,1,-2,1,1,-2,1,-2,-2]} & palindrome & negative\\
\texttt{14a\_19450} & $(3,1,2,1,1,2,1,3)$ & \texttt{[1,1,1,-2,1,1,-2,1,-2,-2,1,-2,-2,-2]} & palindrome & negative\\
\texttt{14a\_19470} & $(1,1,1,1,1,1,1,1,1,1,1,1,1,1)$ & \texttt{[1,-2,1,-2,1,-2,1,-2,1,-2,1,-2,1,-2]} & palindrome \& $p=1$ & fully\\
\texttt{14a\_19472} & $(1,1,3,1,1,1,1,3,1,1)$ & \texttt{[1,-2,1,1,1,-2,1,-2,1,-2,-2,-2,1,-2]} & palindrome \& $p=5$ & fully\\
\texttt{14a\_19473} & $(1,1,1,1,1,2,2,1,1,1,1,1)$ & \texttt{[1,-2,1,-2,1,-2,-2,1,1,-2,1,-2,1,-2]} & palindrome & negative\\
\texttt{14a\_19476} & $(3,1,1,1,1,1,1,1,1,3)$ & \texttt{[1,1,1,-2,1,-2,1,-2,1,-2,1,-2,-2,-2]} & palindrome & negative\\
\texttt{14a\_19487} & $(1,1,1,2,1,1,1,1,2,1,1,1)$ & \texttt{[1,-2,1,-2,-2,1,-2,1,-2,1,1,-2,1,-2]} & palindrome & negative\\
\texttt{14a\_19498} & $(1,1,2,3,3,2,1,1)$ & \texttt{[1,-2,1,1,-2,-2,-2,1,1,1,-2,-2,1,-2]} & palindrome & negative\\
\texttt{14a\_19517} & $(1,2,1,2,1,1,2,1,2,1)$ & \texttt{[1,-2,-2,1,-2,-2,1,-2,1,1,-2,1,1,-2]} & palindrome \& $p=5$ & fully\\
\texttt{14a\_19526} & $(1,1,2,1,2,2,1,2,1,1)$ & \texttt{[1,-2,1,1,-2,1,1,-2,-2,1,-2,-2,1,-2]} & palindrome & negative\\
\bottomrule
\end{longtable}
\normalsize

\smallskip
\noindent\footnotesize
As in \Cref{tab:bi3-cn12}, each braid word is the standard form $\beta_c$
read directly from $c$ (\Cref{def:standard_form}). Among all prime amphichiral knots
of braid index~$3$ with crossing number up to~$14$, the entry \texttt{14a\_18676} is the
only positive amphichiral knot; it realizes the ``odd period, not a palindrome'' case of
\Cref{thm:C}, and is depicted as a closed braid in \Cref{fig:braid14a18676closed}.
Again every prime amphichiral knot of braid index~$3$ with crossing number~$14$
is alternating (\Cref{thm:amphi-implies-alt}), so the prime-knot list is complete.
\end{landscape}

\begin{landscape}
\section[Prime 5-braid-index knot table (crossing number <= 10)]%
        {Prime $5$-braid-index knot table \cite{knotinfo} (crossing number $\le 10$)}
\label{app:bi5-cn10}
\renewcommand{\arraystretch}{1.25}
\begin{longtable}{c c c}
\caption{Prime amphichiral knots of braid index $5$ with crossing number at most $10$.}
\label{tab:bi5-cn10}\\
\toprule
\textbf{Knot} & \textbf{Braid word} & \textbf{Symmetry type}\\
\hline
\endfirsthead
\multicolumn{3}{l}{\small\upshape \Cref{tab:bi5-cn10} continued from previous page}\\
\hline
\textbf{Knot} & \textbf{Braid word} & \textbf{Symmetry type}\\
\hline
\endhead
\hline
\endfoot
\texttt{8a\_3}   & \texttt{[1,1,2,-1,-3,2,-3,-4,3,-4]} & fully\\
\texttt{8a\_12}  & \texttt{[1,-2,1,3,-2,-4,3,-4]} & fully\\
\texttt{10a\_33} & \texttt{[1,1,2,-1,2,-3,2,-3,-3,-4,3,-4]} & fully\\
\texttt{10a\_37} & \texttt{[1,1,1,2,-1,-3,2,-3,-4,3,-4,-4]} & fully\\
\texttt{10a\_43} & \texttt{[1,1,-2,1,3,-2,-4,3,-4,-4]} & fully\\
\texttt{10a\_45} & \texttt{[1,-2,1,-2,3,-2,3,-4,3,-4]} & fully\\
\texttt{10a\_81} & \texttt{[1,1,-2,1,3,2,2,-4,-3,-3,-3,-4]} & negative\\
\texttt{10a\_88} & \texttt{[1,-2,1,3,-2,3,-2,-4,3,-4]} & negative\\
\texttt{10a\_115}& \texttt{[1,-2,1,3,2,2,-4,-3,2,-3,-3,-4]} & negative\\
\bottomrule
\end{longtable}
\noindent{All prime amphichiral knots with crossing number at most $10$ have braid
index $3$ or $5$.}
\end{landscape}

\end{document}